\newcommand{\de}[0]{\mathrel{\mathop:}=}
\newcommand{\ed}[0]{=\mathrel{\mathop:}}
\newcommand{\ie}[0]{\mathrm{i}}
\newcommand{\dif}[1]{\mathrm{d}#1}
\newcommand{\C}[0]{\mathbb{C}}
\newcommand{\R}[0]{\mathbb{R}}
\newcommand{\N}[0]{\mathbb{N}}
\newcommand{\K}[0]{\mathbb{K}}
\newcommand{\Q}[0]{\mathbb{Q}}
\newcommand{\cL}[0]{\mathcal{L}}
\newcommand{\cS}[0]{\mathcal{S}}
\newcommand{\cSP}[0]{\mathcal{SP}}
\newcommand{\sdeg}[0]{\dif{}_{\mathcal{L}}}
\newcommand{\sq}[0]{\mathrm{q}_{\mathcal{L}}}
\newcommand{\dL}[1]{\mathrm{d}_{#1}}
\newcommand{\qL}[1]{\mathrm{q}_{#1}}
\newtheorem{theorem}{Theorem}
\newtheorem{corollary}{Corollary}
\newtheorem{lemma}{Lemma}
\newtheorem{conjecture}{Conjecture}
\numberwithin{equation}{section}
\theoremstyle{definition}
\newtheorem{remark}{Remark}
\begin{document}

\title[Conditional estimates for $L$-functions in the Selberg class]{Conditional estimates for $L$-functions in the Selberg class}
\author[N.~Paloj\"{a}rvi and A.~Simoni\v{c}]{Neea Paloj\"{a}rvi and Aleksander Simoni\v{c}}
\subjclass[2010]{11M06, 11M26, 41A30}
\keywords{Selberg class, Generalized Riemann Hypothesis, Bandlimited functions}
\address{Department of Mathematics and Statistics, The University of Helsinki, Helsinki, Finland}
\email{neea.palojarvi@helsinki.fi}
\address{School of Science, The University of New South Wales (Canberra), ACT, Australia}
\email{a.simonic@student.adfa.edu.au}
\date{\today}

\begin{abstract}
Assuming the Generalized Riemann Hypothesis, we provide uniform upper bounds with explicit main terms for moduli of $\left(\cL'/\cL\right)(s)$ and $\log{\cL(s)}$ for $1/2+\delta\leq\sigma<1$, fixed $\delta\in(0,1/2)$ and for functions in the Selberg class except for the identity function. We also provide estimates under additional assumptions on the distribution of Dirichlet coefficients of $\cL(s)$ on prime numbers. Moreover, by assuming a polynomial Euler product representation for $\cL(s)$, we establish uniform bounds for $|3/4-\sigma|\leq 1/4-1/\log{\log{\left(\sq|t|^{\sdeg}\right)}}$, $|1-\sigma|\leq 1/\log{\log{\left(\sq|t|^{\sdeg}\right)}}$ and $\sigma=1$, and completely explicit estimates by assuming also the strong $\lambda$-conjecture.
\end{abstract}
	
\maketitle
\thispagestyle{empty}

\section{Introduction}
\label{sec:intro}

The Selberg class of functions $\cS$ was introduced in~\cite{SelbergOldAndNew} and consists of Dirichlet series
\begin{equation*}
\label{eq:DS}
\mathcal{L}(s) = \sum_{n=1}^{\infty} \frac{a(n)}{n^s},
\end{equation*}
where $s=\sigma+\ie t$ for real numbers $\sigma$ and $t$, satisfying the following axioms:
\begin{enumerate}
  \item \emph{Ramanujan hypothesis}. We have $a(n)\ll_{\varepsilon} n^{\varepsilon}$ for any $\varepsilon>0$.
  \item \emph{Analytic continuation}. There exists $k\in\N_{0}$ such that $(s-1)^k\mathcal{L}(s)$ is an entire function of finite order.
  \item \emph{Functional equation}. The function $\mathcal{L}(s)$ satisfies $\mathfrak{L}(s)=\omega\overline{\mathfrak{L}(1-\bar{s})}$, where
        \[
        \mathfrak{L}(s)=\mathcal{L}(s)Q^s\prod_{j=1}^{f}\Gamma\left(\lambda_{j}s+\mu_j\right)
        \]
        with $\left(Q,\lambda_j\right)\in\R_{+}^2$, and $\left(\mu_j,\omega\right)\in\C^2$ with $\Re\{\mu_j\}\geq 0$ and $|\omega|=1$.
  \item \emph{Euler product}. The function $\cL(s)$ has a product representation
        \[
        \mathcal{L}(s) = \prod_{p}\exp{\left(\sum_{k=1}^{\infty}\frac{b\left(p^k\right)}{p^{ks}}\right)},
        \]
        where the coefficients $b\left(p^k\right)$ satisfy $b\left(p^k\right)\ll p^{k\theta}$ for some $0\leq\theta<1/2$.
\end{enumerate}
It is well known that the data from the functional equation is not uniquely defined by $\cL\in\cS$. However, one can show that the degree $\sdeg$ and the conductor $\sq$,
\[
\sdeg = 2\sum_{j=1}^{f}\lambda_j, \quad \sq=(2\pi)^{\sdeg}Q^2\prod_{j=1}^{f}\lambda_{j}^{2\lambda_{j}},
\]
are indeed invariants for the class $\cS$. In the present paper we work only with these two invariants and for the sake of simplicity we introduce $\tau\de\sq|t|^{\sdeg}$. There exist other invariants and the reader is referred to~\cite{PerelliSurveyI} and the references thereof. Notable examples of functions in $\mathcal{S}$ include:
\begin{enumerate}
\item The Riemann zeta-function $\zeta(s)$. We have $\dL{\zeta}=1$ and $\qL{\zeta}=1$.
\item Dirichlet $L$-functions $L(s,\chi)$ for a primitive characters $\chi$ modulo $q\geq 2$. We have $\dL{L(s,\chi)}=1$ and $\qL{L(s,\chi)}=q$.
\item Dedekind zeta-functions $\zeta_{\K}(s)$ for a number fields $\K$ with degree $n_{\K}=[\K:\Q]$ and discriminant $\Delta_{\K}$. We have $\dL{\zeta_{\K}}=n_{\K}$ and $\qL{\zeta_{\K}}=\left|\Delta_{\K}\right|$.
\end{enumerate}
We know that the only element in $\cS$ with the degree zero is the constant function $\cL\equiv1$, and that $\sdeg\geq1$ otherwise, see~\cite[Theorem 6.1]{SteudingBook}. To simplify our considerations, we will assume that $\cL\not\equiv1$ throughout this paper.

The celebrated Generalized Riemann Hypothesis (GRH) claims that $\cL(s)\neq0$ for $\sigma>1/2$. The main purpose of the present paper is to provide various estimates for moduli of the logarithmic derivative $\left(\cL'/\cL\right)(s)$ and the logarithm $\log{\cL(s)}$ in different regions right of the critical line and under the assumption of GRH. We should emphasize that our focus is on the results that hold for $\tau$ and $|t|$ sufficiently large\footnote{This means that there exists an absolute $T>0$ such that the results are valid for all $\tau\geq T$ and $|t|\geq T$.}, although there is no serious obstacle in obtaining also estimates when $t=0$ by following the same approach. For the Riemann zeta-function, Littlewood~\cite{LittlewoodOnTheRiemann} proved that the Riemann Hypothesis (RH) implies
\begin{gather}
\frac{\zeta'}{\zeta}(s) \ll \left((\log t)^{2-2\sigma}+1\right)\min\left\{\dfrac{1}{|\sigma-1|},\log\log t\right\}, \label{eq:Littlewood} \\
\log{\zeta(s)} \ll \frac{(\log t)^{2-2\sigma}}{\log{\log{t}}}\min\left\{\dfrac{1}{|\sigma-1|},\log\log t\right\} + \log{\log{\log{t}}} \label{eq:Littlewood2}
\end{gather}
for $1/2+1/\log{\log{t}}\leq\sigma\leq3/2$ and $t$ large, see~\cite[Corollaries 13.14 and 13.16]{MontgomeryVaughan}. In particular,
\begin{equation}
\label{eq:LittlewoodSpecial}
\frac{\zeta'}{\zeta}(s) \ll \left(\log{t}\right)^{2-2\sigma}, \quad \log{\zeta(s)} \ll \frac{(\log t)^{2-2\sigma}}{\log{\log{t}}}
\end{equation}
for $1/2+\delta\leq\sigma\leq1-\delta$ and fixed $\delta\in(0,1/4)$. In fact, Littlewood stated only~\eqref{eq:LittlewoodSpecial}, but it is possible to extend his techniques (see also~\cite[Section 14.5]{Titchmarsh}) to the region right of the $1$-line, see~\cite[Section 14.33]{Titchmarsh}. However, the argument which is based on a slightly modified version of the Selberg moment formula~\cite[Section 13.2]{MontgomeryVaughan} is more transparent, allows generalisations\footnote{See also~\cite[Theorems 5.17 and 5.19]{IKANT}. Our $\tau$ has a similar r\^{o}le to their analytic conductor $\mathfrak{q}(f,s)$.} to other functions, and easily yields~\eqref{eq:Littlewood} and~\eqref{eq:Littlewood2}. Such an approach with an additional feature of estimating the sum over the non-trivial zeros with a help of bandlimited majorants was explored in~\cite{ChirreSimonicHagen}, and the present paper extends this to the Selberg class. Special care is taken to express all results uniformly in $\cL$.

The aforementioned bounds have many applications: in estimating the number of zeros of $\cL$ and the modulus of its argument~\cite{Carneiro, ChirreANote, Palojarvi,SimonicSonRH}; in number theory~\cite{BennettAP,EHP,SimonicCS,SimonicLfunctions}; in studying moments of $L$-functions~\cite{SoundMoments}; and in studying character sums~\cite{Cech}. It is somewhat surprising that their shape has never been improved, and efforts have been placed into obtaining exact constants for the main terms. Recently, such a result for~\eqref{eq:Littlewood} was given by Chirre and Gon\c{c}alves~\cite[Theorem 1]{chirreGoncalves}, and for~\eqref{eq:Littlewood2} by Carneiro and Chandee~\cite[Theorems 1 and 2]{CarneiroChandee}. Chirre~\cite{ChirreANote} obtained an estimate for $\log{|L(s,\pi)|}$, where $L(s,\pi)$ is an entire $L$-function satisfying special conditions, see~\cite[Section 1.1]{ChirreANote} for a definition. Aistleitner and Pa\'{n}kowski~\cite[Proposition~1.1]{AistleitnerPankowski2017} generalised\footnote{Their main objective was to provide various $\Omega$-results for $\cL(s)$. For recent advances in the theory of $\Omega$-theorems for $\cL'/\cL$ where $\cL\in\{\zeta(s),L(s,\chi)\}$, see~\cite{YangD2023,li2024omega}.}~\eqref{eq:LittlewoodSpecial} for $\log{\zeta(s)}$ to the Selberg class with an additional requirement~\eqref{eq:PrimeMean} that concerns distribution of the coefficients of a Dirichlet series over prime numbers. Concerning estimates on the 1-line, Littlewood~\cite[Theorem 7]{LittlewoodOnTheRiemann} provided a sharper estimate than~\eqref{eq:Littlewood2} by proving
\begin{equation}
\label{eq:Littlewood1LineUpper}
\left|\zeta(1+\ie t)\right| \leq \left(2e^{\gamma}+o(1)\right)\log{\log{t}}.
\end{equation}
Later he also provided in~\cite{Littlewood1928} a lower bound for $\zeta(1+\ie t)$, namely that
\begin{equation}
\label{eq:Littlewood1Line}
\frac{1}{\left|\zeta(1+\ie t)\right|} \leq \left(\frac{12e^{\gamma}}{\pi^2}+o(1)\right)\log{\log{t}}.
\end{equation}
The error terms in~\eqref{eq:Littlewood1LineUpper} and~\eqref{eq:Littlewood1Line} were improved in~\cite{CarneiroChandee},~\cite{Lamzouri} and~\cite{Lum18}. In particular, Lumley~\cite[Corollary 2.3]{Lum18} extended the work~\cite{Lamzouri} and provided fully explicit\footnote{By an explicit (or effective) result we mean that all ``hidden'' constants in a corresponding non-explicit result are computed or expressed with known parameters via some algebraic formulae.} bounds for $L(1+\ie t,f)$ for a large set\footnote{In the framework of~\cite[Chapter~5]{IKANT}. That class of $L$-functions is, under the Ramanujan--Petersson conjecture (which implies axiom (1) and $\Re\{\mu_j\}\geq 0$ from axiom (3)), almost the same as the Selberg class with a polynomial Euler product (Section~\ref{subsec:PolyEuler}).} of entire $L$-functions under the assumption of GRH and the Ramanujan--Petersson conjecture. Recently, a precise and explicit form of~\eqref{eq:Littlewood} on the $1$-line was given by Chirre, Hagen and the second author in~\cite[Theorem~5]{ChirreSimonicHagen} by proving that 
\begin{equation}
\label{eq:CHS}
\left|\frac{\zeta'}{\zeta}(1+\ie t)\right| \leq 2\log{\log{t}} - 0.4989 + 5.35\frac{(\log{\log{t}})^{2}}{\log{t}}, \quad t\geq 10^{30}.
\end{equation}
Interestingly, it shows that one can obtain for sufficiently large $t$ a simple upper bound $2\log{\log{t}}$. In addition to the above, explicit and conditional results on $\left(\zeta'/\zeta\right)(s)$ or $\log{\zeta(s)}$, in different regions right of the critical line and for other $L$-functions as well, appeared in~\cite{IharaMurtyShimura,LanguascoTrudgian,SimonicCS,SimonicLfunctions,Chirre2024}.

Our results are divided into two categories: non-explicit and explicit. In the non-explicit setting, we are not only  providing estimates in the range $1/2+\delta\leq\sigma<1$ and for the full Selberg class (Theorem~\ref{thm:MainGeneral}), but also improved bounds under additional assumptions such as the prime mean-square conjecture (Theorems~\ref{thm:MainGeneralV2} and~\ref{thm:MainGeneral1line}) or the existence of a polynomial Euler product (Theorems~\ref{thm:MainNonExplicit} and~\ref{thm:1line}). For the former we formulated Conjectures~\ref{conj:SelbergVariant} and~\ref{conj:SelbergVariant2} that enclose other well-known conjectures for the Selberg class, and for the latter we provide estimates according to whether $\left|3/4-\sigma\right|\leq1/4-1/\log{\log{\tau}}$ or $|1-\sigma|\log{\log{\tau}}\leq 1$, as well as a separate result when $\sigma=1$. In the explicit setting, we provide effective versions (Theorems~\ref{thm:MainExplicit} and~\ref{thm:1LineExplicit}) of Theorems~\ref{thm:MainNonExplicit} and~\ref{thm:1line}. Although there is no serious obstacle in the method, we are additionally assuming also the \emph{strong $\lambda$-conjecture}, i.e., \emph{all $\lambda_j$ from the functional equation can be chosen to be equal to $1/2$}, in order to simplify the proof. The main results in this direction are relatively complicated, but allow some optimisation on the parameters that might be useful in applications. Because of this we are stating simplified results (Corollaries~\ref{cor:ZetaExplicit} and~\ref{corol:FixedRegions}) for the Riemann zeta-function and for a large set of $L$-functions, respectively.

Comparing the estimates from Sections~\ref{sec:MainResultsNE} and~\ref{sec:Explicit} with known results reveals the following:
\begin{enumerate}
    \item We provide a precise form of~\cite[Proposition~1.1]{AistleitnerPankowski2017}, see Remark~\ref{rem:Main2}. 
    \item In the case of $\cL(s)=\zeta(s)$ and $\alpha=3/4$, estimate~\eqref{eq:LogDerSpecial} improves~\cite[Theorem 1]{chirreGoncalves}. This was already announced in~\cite[p.~16]{ChirreSimonicHagen}, although without a detailed proof. Moreover, the same estimate for $\alpha=1.278$ improves also~\cite[Theorem 2]{chirreGoncalves}.
    \item In the case of $\cL(s)=L(s,\pi)$, $L(s,\pi)$ entire, $m=\sdeg\geq 3$ and $\alpha=1.6$, estimate~\eqref{eq:LogSpecial} improves~\cite[Corollary 2]{ChirreANote} in the range $0.56\leq\sigma\leq0.999$. Note that in~\cite[Theorems~1 and~2 (cases ``otherwise'')]{CarneiroChandee}, the coefficients of the main error terms should be 
    \begin{equation*}
        \pm \frac{1}{2}\left(1+\frac{2\sigma-1}{\sigma(1-\sigma)}\right) \quad\textrm{instead of}\quad \pm \left(\frac{1}{2}+\frac{2\sigma-1}{\sigma(1-\sigma)}\right), 
    \end{equation*}
    see~\cite[Eq.~(2.19) and Case~3 on p.~373]{CarneiroChandee}. This is also in accordance with the previously mentioned result by Chirre. Hence, our results for $\cL(s)=\zeta(s)$ do not improve these results, see also Section~\ref{sec:DiscussionLog}.
    \item Carneiro, Chirre and Milinovich~\cite{Carneiro} obtained RH estimate for $S_{0,\sigma}(t)=\frac{1}{\pi}\arg{\zeta(\sigma+\ie t)}$. Note that $\left|S_{0,\sigma}(t)\right|\leq \frac{1}{\pi}\left|\log{\zeta(s)}\right|$ for $\sigma\in(1/2,1)$ and under RH. Estimate~\eqref{eq:LogSpecial} for $\cL(s)=\zeta(s)$ and $\alpha=1.38$ then improves~\cite[Corollary 5]{Carneiro} in the range $0.55\leq\sigma\leq 1-\delta$ for fixed $\delta\in(0,0.45)$.
    \item In~\cite{GarciaLee22}, explicit lower and upper bounds on the residue of the Dedekind zeta functions are provided under the assumption of GRH and the Dedekind conjecture. Under the same conditions and for large $\left|\Delta_{\mathbb{K}}\right|$, we improve their upper bounds, see Corollary~\ref{cor:DedekindRes}.
\end{enumerate}

The outline of this paper is as follows. In Section~\ref{sec:MainResultsNE} we formulate the main results in a non-explicit setting, while in Section~\ref{sec:Explicit} we formulate effective versions of Theorems~\ref{thm:MainNonExplicit} and~\ref{thm:1line}, together with their simplifications. In Section~\ref{sec:SMF} we revise Selberg's moment formula for functions in the Selberg class, and in Section~\ref{sec:SumsPrimes} we derive estimates for the corresponding sums over prime numbers by considering whether $\cL\in\cS$ or $\cL\in\cSP$. Section~\ref{sec:SumZeros} is devoted to the extension of the Guinand--Weil exact formula to the Selberg class and estimation of the sum over the non-trivial zeros. The proofs of Theorems~\ref{thm:MainGeneral}--\ref{thm:1line} are presented in Section~\ref{sec:ProofFirst}, the proofs of Theorems~\ref{thm:MainExplicit} and~\ref{thm:1LineExplicit} are provided in Section~\ref{sec:ProofSecond}, and the proofs of Corollaries ~\ref{cor:ZetaExplicit} and~\ref{corol:FixedRegions} are given in Section~\ref{sec:simpl}. Section~\ref{sec:discussion} is devoted to a discussion.

\subsection*{Acknowledgements}

A.S.~would like to thank N.P.~and The University of Helsinki for their hospitality during his visit in June 2022. He also gratefully acknowledges the Lift-off Fellowship support from The Australian Mathematical Society. N. P. would like to thank the Emil Aaltonen Foundation for supporting her work. The authors would like to thank Tim Trudgian, J\"{o}rn Steuding, Andr\'{e}s Chirre and Bryce Kerr for taking time to read the manuscript. 

\section{The main results -- non-explicit setting}
\label{sec:MainResultsNE}

Note that $|a(n)|\leq\mathcal{C}_{\cL}^{R}(\varepsilon)n^{\varepsilon}$ for any sufficiently small $\varepsilon>0$, and $\left|b\left(p^k\right)\right|\leq \mathcal{C}^{E}_{\cL} p^{k\theta}$ for some $\theta\in[0,1/2)$, are quantitative versions of the Ramanujan hypothesis and the Euler product representation, respectively. Also, we are going to use the following notation throughout this paper: \begin{equation}
\label{eq:lambdapm}
\lambda^{-} \leq \min_{1\leq j\leq f}\left\{\lambda_{j}\right\} \leq \max_{1\leq j\leq f}\left\{\lambda_{j}\right\} \leq \lambda^{+}, \quad
\max_{1\leq j\leq f}\left\{\left|\mu_{j}\right|\right\} \leq \mu^{+},
\end{equation}
where $f$, $\lambda_j$ and $\mu_j$ are from the functional equation for $\cL$. In addition, $m_{\cL}$ is the order of pole of $\cL(s)$ at $s=1$. We are stating our results uniformly in $\cL$, i.e., the implied $O$-constants are independent of the parameters from the axioms of the Selberg class unless otherwise indicated by $O_{(\cdot)}$. 

\subsection{General estimates for $\cL\in\cS$}

Let $\cL$ be the element in the Selberg class. The generalized von Mangoldt function $\Lambda_{\cL}(n)$ is defined by
\[
\frac{\cL'}{\cL}(s) = -\sum_{n=1}^{\infty}\frac{\Lambda_{\cL}(n)}{n^s}
\]
for $\sigma>1$, and obviously $\Lambda_{\zeta}(n)=\Lambda(n)$ for $\Lambda(n)$ being the von Mangoldt function. We can see that the Euler product representation implies $\Lambda_{\cL}(n)=b(n)\log{n}$ with $\Lambda_{\cL}(n)=0$ for $n$ not being a prime power. Because $a(p)=b(p)$, we have
\begin{equation}
\label{eq:PsiTilde}
\widetilde{\psi}_{\cL}(x)\de\sum_{n\leq x} \left|\Lambda_{\cL}(n)\right| = \sum_{p\leq x}\left|a(p)\right|\log{p} + \sum_{k=2}^{\lfloor\frac{\log{x}}{\log{2}}\rfloor}\sum_{p\leq x^{\frac{1}{k}}}\left|\Lambda_{\cL}\left(p^k\right)\right|.
\end{equation}
It is easy to see that~\eqref{eq:PsiTilde} implies
\begin{equation}
\label{eq:MainForPsi}
\widetilde{\psi}_{\cL}(x) = \sum_{p\leq x}\left|a(p)\right|\log{p} + O\left(\mathcal{C}_{\cL}^{E} x^{\frac{1}{2}+\theta}\log^{2}{x}\right)
\end{equation}
for all $x\geq 2$, see Section~\ref{sec:PrimesForS}. Relation~\eqref{eq:MainForPsi} is fundamental for proving Theorems~\ref{thm:MainGeneral}--\ref{thm:MainGeneral1line}.

As usual and in analogy with $\log{\zeta(s)}$, we define
\[
\log{\cL(s)} = \sum_{n=2}^{\infty}\frac{\Lambda_{\cL}(n)}{n^{s}\log{n}}
\]
for $\sigma>1$, where the value of $\log{\cL(s)}$ is that which tends to $0$ as $\sigma\to\infty$ for any fixed $t$. For $\sigma<2$ and $t\neq 0$, we define $\log{\cL(s)}$ as the analytic continuation of the above function along the straight line from $2+\ie t$ to $\sigma+\ie t$, provided that $\cL(s)\neq 0$ on this segment of line. Note that GRH implies holomorphicity of $\log{\cL(z)}$ on the domain $\left\{z\in\C\colon \Re\{z\}>1/2\right\}\setminus(1/2,1]$.

Our first result is a generalization of~\eqref{eq:LittlewoodSpecial} for functions in the Selberg class. Before stating it, we provide a few definitions used in the statement. We define
\begin{gather}
A\left(a,\alpha,u,\sigma\right) \de \frac{a(2\sigma-1)\left(1-\exp\left(-\frac{2\alpha(1-u)}{2\sigma-1}\right)\right)}{2\alpha(1-u)^2}, \label{eq:A} \\
\eta(\alpha,\sigma,\tau) \de \frac{1}{2}\left(1-\frac{\alpha}{(2\sigma-1)\log{\log{\tau}}}\right)^{-1}, \label{eq:eta}
\end{gather}
together with
\begin{equation}
\label{eq:MainGeneralA_1}
A_1\left(a,k,\varepsilon,\theta,\sigma,\tau\right) \de a\left(1+\left|\Theta_{\theta,\varepsilon}(\sigma)\right|^{k}\left(\log{\tau}\right)^{2\Theta_{\theta,\varepsilon}(\sigma)}\left(\log{\log{\tau}}\right)^{k}\right)\min{\left\{\frac{1}{\left|\Theta_{\theta,\varepsilon}(\sigma)\right|^{k+1}},\left(\log{\log{\tau}}\right)^{k+1}\right\}},
\end{equation}
\begin{equation}
\label{eq:MainGeneralA_2}
A_2\left(a,b,\varepsilon,\theta,\tau\right) \de a\left(\log{\tau}\right)^{2\varepsilon}\min\left\{\frac{1}{\varepsilon},\log{\log{\tau}}\right\} + b\min\left\{\frac{1}{\left|\Theta_{\theta,\varepsilon}(1)\right|^{3}},\left(\log{\log{\tau}}\right)^{3}\right\},
\end{equation}
\begin{equation}
\label{eq:MainGeneralA_3}
A_3\left(a,b,\varepsilon,\theta,\tau\right) \de \frac{a}{(\log{\tau})^{1-2\varepsilon}\log{\log{\tau}}} + \frac{b\log{\log{\tau}}}{(\log{\tau})^{-2\Theta_{\theta,\varepsilon}(3/2)}},
\end{equation}
and 
\begin{equation}
\label{def:theta}
    \Theta_{\theta,\varepsilon}(\sigma)\de 1/2+\max\{\theta,\varepsilon\}-\sigma. 
\end{equation}

\begin{theorem}
\label{thm:MainGeneral}
Let $\cL\in\cS$ and assume the Generalized Riemann Hypothesis for $\cL$ and the Riemann Hypothesis for $\zeta(s)$. Fix $\alpha\geq\log{2}$ and $\delta\in(0,1/2)$. Then
\begin{flalign}
\label{eq:MainGeneral}
\left|\frac{\cL'}{\cL}(s)\right| &\leq A\left((1+\varepsilon)\mathcal{C}_{\cL}^{R}(\varepsilon),\alpha,\sigma-\varepsilon,\sigma\right)\left(\log{\tau}\right)^{2(1-\sigma+\varepsilon)} 
+ \frac{e^{\alpha}+1}{2\alpha}\left(\log{\tau}\right)^{2-2\sigma} - \frac{(1+\varepsilon)\mathcal{C}_{\cL}^{R}(\varepsilon)\sigma 2^{1-\sigma+\varepsilon}}{1-\sigma+\varepsilon} \nonumber \\ 
&+ O\left(m_{\cL}\left(\log{\tau}\right)^{3-4\sigma}\right) + O\left(A_1\left(\mathcal{C}_{\cL}^{R}(\varepsilon)+\mathcal{C}_{\cL}^{E},2,\varepsilon,\theta,\sigma,\tau\right)\right) + O\left(\frac{A_2\left(\mathcal{C}_{\cL}^{R}(\varepsilon),\mathcal{C}_{\cL}^{R}(\varepsilon)+\mathcal{C}_{\cL}^{E},\varepsilon,\theta,\tau\right)}{\left(\log{\tau}\right)^{2\delta}}\right) \nonumber \\ 
&+ O_{\lambda^{+},\lambda^{-}}\left(\frac{f}{(\log{\tau})^{2\delta}}\right) + 
O\left(\frac{\sdeg+m_{\cL}}{\left(\log{\tau}\right)^{2\delta}}+m_{\cL}\left(\frac{\sq}{\tau}\right)^{\frac{2}{\sdeg}}\left(\log{\tau}\right)^{1-2\delta}\right)
\end{flalign}
and
\begin{flalign}
\label{eq:MainGeneralLog}
\left|\log{\cL(s)}\right| &\leq \eta(\alpha,\sigma,\tau)A\left((1+\varepsilon)\mathcal{C}_{\cL}^{R}(\varepsilon),\alpha,\sigma-\varepsilon,\sigma\right)\frac{\left(\log{\tau}\right)^{2(1-\sigma+\varepsilon)}}{\log{\log{\tau}}}
+ \left(\frac{e^{\alpha}+1}{4\alpha}\right)\frac{(\log{\tau})^{2-2\sigma}}{\log{\log{\tau}}} \nonumber \\  
&- \frac{(1+\varepsilon)\mathcal{C}_{\cL}^{R}(\varepsilon)\eta(\alpha,\sigma,\tau)}{(1-\sigma+\varepsilon)\log{\log{\tau}}}
+ \mathcal{C}_{\cL}^{R}(\varepsilon)\log{\left(2\log{\log{\tau}}\right)}
+O\left(\frac{\mathcal{C}_{\cL}^{R}(\varepsilon)\left(\log{\tau}\right)^{2(1-\sigma+\varepsilon)}}{(1-\sigma+\varepsilon)^{2}\left(\log{\log{\tau}}\right)^2}\right) \nonumber \\ &+ O\left(\frac{m_{\cL}\left(\log{\tau}\right)^{3-4\sigma}}{\log{\log{\tau}}}\right) + O\left(A_1\left(\mathcal{C}_{\cL}^{R}(\varepsilon)+\mathcal{C}_{\cL}^{E},1,\varepsilon,\theta,\sigma,\tau\right)\right) \nonumber \\  
&+ O\left(\frac{A_2\left(\mathcal{C}_{\cL}^{R}(\varepsilon),\mathcal{C}_{\cL}^{R}(\varepsilon)+\mathcal{C}_{\cL}^{E},\varepsilon,\theta,\tau\right)+\sdeg+m_{\cL}}{\left(\log{\tau}\right)^{2\delta}\log{\log{\tau}}}\right) + O_{\lambda^{+},\lambda^{-}}\left(\frac{f}{(\log{\tau})^{2\delta}\log{\log{\tau}}}\right) \nonumber \\ 
&+ O\left(A_3\left(\frac{\mathcal{C}_{\cL}^{R}(\varepsilon)}{1-2\varepsilon},\mathcal{C}_{\cL}^{R}(\varepsilon)+\mathcal{C}_{\cL}^{E},\varepsilon,\theta,\tau\right)\right) + O\left(m_{\cL}\left(\frac{\sq}{\tau}\right)^{\frac{2}{\sdeg}}\frac{\left(\log{\tau}\right)^{1-2\delta}}{\log{\log{\tau}}}\right)
\end{flalign}
for $1/2+\delta\leq\sigma<1$, for sufficiently large $\tau$ and $|t|\geq\mu^{+}/\lambda^{-}$, and $\varepsilon\in(0,\delta)$. The implied constants are absolute.
\end{theorem}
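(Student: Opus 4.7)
The plan is to start from Selberg's moment formula for functions in $\cS$, to be established in Section~\ref{sec:SMF}. For $s=\sigma+\ie t$ with $\sigma>1/2$, this identity writes $\left(\cL'/\cL\right)(s)$ as a sum of three contributions: a weighted Dirichlet sum over prime powers of effective length of order $\log\tau$, a sum over the non-trivial zeros $\rho$ of $\cL$, and an archimedean/polar term coming from the $\Gamma$-factors in the functional equation together with the possible pole of $\cL$ at $s=1$ of order $m_{\cL}$. The weight in the prime sum is of Beurling--Selberg type and depends on the parameter $\alpha\geq\log{2}$ that controls the support of its Fourier transform in $[-\alpha,\alpha]$.

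I would first estimate the prime contribution by splitting the sum according to whether $n=p$ or $n=p^{k}$ with $k\geq 2$. For $n=p$, partial summation combined with~\eqref{eq:MainForPsi} and the quantitative Ramanujan bound $|a(p)|\leq(1+\varepsilon)\mathcal{C}_{\cL}^{R}(\varepsilon)p^{\varepsilon}$ yields the leading term $A((1+\varepsilon)\mathcal{C}_{\cL}^{R}(\varepsilon),\alpha,\sigma-\varepsilon,\sigma)(\log\tau)^{2(1-\sigma+\varepsilon)}$, together with the explicit subtractive piece $-(1+\varepsilon)\mathcal{C}_{\cL}^{R}(\varepsilon)\sigma 2^{1-\sigma+\varepsilon}/(1-\sigma+\varepsilon)$ arising from the lower boundary $n=2$. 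The prime powers with $k\geq 2$ are absorbed into $A_{1}$ via the Euler axiom bound $|b(p^{k})|\leq\mathcal{C}_{\cL}^{E}p^{k\theta}$, which is responsible for the factor $(\log\tau)^{2\Theta_{\theta,\varepsilon}(\sigma)}$ in~\eqref{eq:MainGeneralA_1}. The zero sum is then bounded under GRH using the Guinand--Weil type formula extended to $\cS$ (Section~\ref{sec:SumZeros}) with the Beurling--Selberg extremal majorant whose Fourier transform is supported in $[-\alpha,\alpha]$; this is the bandlimited majorant approach of~\cite{ChirreSimonicHagen}. The constant $(e^{\alpha}+1)/(2\alpha)$ in the main zero-sum term is precisely the value of this extremal function at the origin. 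The archimedean and polar pieces are handled by Stirling and produce the error terms involving $\sdeg$, $m_{\cL}$, $f$, $\lambda^{\pm}$, $\mu^{+}$ and $(\sq/\tau)^{2/\sdeg}$, as well as the $(\log\tau)^{3-4\sigma}$ piece coming from the zeros of $\zeta$ arising when the bound~\eqref{eq:MainForPsi} on $\widetilde{\psi}_{\cL}$ is iterated against itself. Optimising the length parameter $x$ to balance the prime and zero main terms then yields~\eqref{eq:MainGeneral}.

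For the logarithm estimate~\eqref{eq:MainGeneralLog}, I would integrate $\left(\cL'/\cL\right)(u+\ie t)$ in $u$ from $\sigma$ out to a point slightly to the right of the $1$-line and then use the absolutely convergent Dirichlet series for $\log\cL$ beyond that point. Equivalently, one applies the Selberg moment identity directly at the level of $\log\cL$, replacing the weight by its primitive, so that the effective length of the new prime sum becomes of order $\log\tau\cdot\log\log\tau$. This longer length produces both the factor $\eta(\alpha,\sigma,\tau)$ in the main term and the division by $\log\log\tau$ in the $(\log\tau)^{2-2\sigma}$ contribution. The explicit additive term $\mathcal{C}_{\cL}^{R}(\varepsilon)\log(2\log\log\tau)$ captures the short-sum contribution from the primes $p\leq 2\log\log\tau$, where the factor $1/\log n$ in the Dirichlet series of $\log\cL$ cannot be discarded, while $A_{3}$ collects the remaining error terms from the prime-power part and from the subtlety near $\sigma=1$.

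The main obstacle is keeping every estimate uniform in the full data $(Q,\lambda_{j},\mu_{j},f,m_{\cL})$ of the functional equation. In particular, the Stirling asymptotics for the archimedean $\Gamma$-factors must be tracked carefully so that the dependence on $\lambda^{\pm}$ and $f$ is cleanly isolated into the $O_{\lambda^{+},\lambda^{-}}$-term, while the remaining dependence on $\sdeg$, $m_{\cL}$ and $(\sq/\tau)^{2/\sdeg}$ contributes only absolute implicit constants as stated. The hypothesis $|t|\geq\mu^{+}/\lambda^{-}$ enters here, as it is exactly what guarantees that each argument $\lambda_{j}s+\mu_{j}$ lies in the region where these uniform Stirling estimates are valid. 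A secondary subtlety is that the Beurling--Selberg extremal construction gives the sharp constant only on one side, so the two-sided control of $|\cL'/\cL(s)|$ and $|\log\cL(s)|$ is obtained by combining the majorant and minorant through the triangle inequality, which is what forces the particular shape $(e^{\alpha}+1)/(2\alpha)$ rather than $e^{\alpha}/(2\alpha)$.
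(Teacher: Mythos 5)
Your overall skeleton---the Selberg moment formula for $\cS$, a GRH bound for the resulting sum over zeros via a Guinand--Weil formula with a bandlimited majorant, partial summation against bounds for $\widetilde{\psi}_{\cL}$ for the prime sum, and integration out to $\Re\{s\}=3/2$ for $\log{\cL}$---is the paper's architecture (Sections~\ref{sec:SMF}--\ref{sec:SumZeros} and~\ref{sec:ProofFirst}). However, several of the mechanisms you describe are misidentified in ways that would block an execution. First, the weight in the prime sum of \eqref{eq:selbergMomentFormula} is not a Beurling--Selberg function with Fourier support in $[-\alpha,\alpha]$; it is the elementary truncation weight $\log{(xy/n)}/\log{y}$, and $\alpha$ enters only through the choices $y=\exp{(\alpha/(\sigma-\tfrac{1}{2}))}$ and $xy=\log^{2}{\tau}$ in \eqref{eq:xy} (the length is $\log^{2}{\tau}$, not $\log{\tau}$, which is exactly what produces the main term $(\log{\tau})^{2-2\sigma}$). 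The bandlimited majorant lives elsewhere: it is the function $h_{a,\Delta}$ with band limit $\Delta=\pi^{-1}\log{\log{\tau}}$, used in Lemma~\ref{lem:SumOverZeros} and Remark~\ref{rem:LemmaOverZeros} to majorize $\sum_{\gamma}\frac{\sigma-1/2}{(\sigma-1/2)^{2}+(t-\gamma)^{2}}$ by $\tfrac{1}{2}\log{\tau}+\cdots$. Second, the constant $(e^{\alpha}+1)/(2\alpha)$ is not the value of any extremal function at the origin, nor does it come from pairing a majorant with a minorant: under GRH one has $|x^{\rho-s}-(xy)^{\rho-s}|\leq(xy)^{1/2-\sigma}(1+y^{\sigma-1/2})=(\log{\tau})^{1-2\sigma}(1+e^{\alpha})$, and dividing by $\log{y}$ and inserting the zero-density bound $\tfrac{1}{2}\log{\tau}$ gives \eqref{eq:SumOverZeros}; only a one-sided majorant is ever needed because the summands are already nonnegative.

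Third, the term $O\left(m_{\cL}(\log{\tau})^{3-4\sigma}\right)$ has nothing to do with zeros of $\zeta$ or with ``iterating'' \eqref{eq:MainForPsi}: it is the contribution of the pole at $s=1$ to the Guinand--Weil bound, namely $O\left(m_{\cL}(\log{\tau})^{2-2\sigma}\right)$ in Remark~\ref{rem:LemmaOverZeros}, multiplied by the prefactor $(\log{\tau})^{1-2\sigma}$ from \eqref{eq:SumOverZeros}. RH for $\zeta(s)$ is instead used in the prime sum, to write $\vartheta(x)=x+O\left(\sqrt{x}\log^{2}{x}\right)$ in the derivation of \eqref{eq:main}. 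Finally, for \eqref{eq:MainGeneralLog} the effective length does not become $\log{\tau}\cdot\log{\log{\tau}}$: the sum keeps length $xy=\log^{2}{\tau}$ but gains a factor $1/\log{n}\asymp1/\log{\log{\tau}}$, the factor $\eta(\alpha,\sigma,\tau)=\log{(xy)}/(2\log{x})$ records the discrepancy between $\log{x}$ and $\log{(xy)}$, the term $\mathcal{C}_{\cL}^{R}(\varepsilon)\log{\left(2\log{\log{\tau}}\right)}$ is $\log{\log{(xy)}}$ coming from $\int_{2}^{xy}u^{-\sigma}(\log{u})^{-1}\,\dif{u}$ over the whole range (not from $p\leq2\log{\log{\tau}}$), and $A_3$ bounds the tail $\mathcal{E}_{x}$ at the anchor point $3/2+\ie t$ in \eqref{eq:mainForLog}, which your sketch omits. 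You should rework the plan so that the truncation parameter $\alpha$ and the bandlimited majorant play their separate roles, and derive the main constants from the explicit choices \eqref{eq:xy} rather than from extremal-function values.
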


\begin{remark}
We can see that, for fixed $a,k>0$, fixed $\theta\in[0,1/2)$ and fixed $\sigma\in(1/2,1)$, 
\[
A_1\left(a,k,\varepsilon,\theta,\sigma,\tau\right) \ll \left(\log{\log{\tau}}\right)^{k+1} + \left(\log{\tau}\right)^{1-2\sigma+2\max\{\theta,\varepsilon\}}\left(\log{\log{\tau}}\right)^{2k+1} = o\left(\left(\log{\tau}\right)^{2(1-\sigma+\varepsilon)}\right)
\]
as $\tau\to\infty$. Also, for fixed $a,b>0$ and fixed $\sigma\in(1/2,1)$,
\[
A_2\left(a,b,\varepsilon,\theta,\tau\right) \ll \left(\log{\tau}\right)^{2\varepsilon}\left(\log{\log{\tau}}\right)^{3} = o\left(\left(\log{\tau}\right)^{2(1-\sigma+\varepsilon)}\right) 
\]
as $\tau\to\infty$. It is clear now that all $O$-terms in~\eqref{eq:MainGeneral} and~\eqref{eq:MainGeneralLog} are asymptotically smaller than the corresponding main terms.
\end{remark}

\begin{remark}
We do not need the Ramanujan hypothesis to hold for all positive integers $n$ in Theorem \ref{thm:MainGeneral}. Instead, it is sufficient that $|a(p)|\leq\mathcal{C}_{\cL}^{R}(\varepsilon)p^{\varepsilon}$ holds for prime numbers $p$.
\end{remark}

It is clear that~\eqref{eq:LittlewoodSpecial} with explicit constants for the main terms can be recovered by taking $\varepsilon\to0$ in Theorem~\ref{thm:MainGeneral} since $\mathcal{C}_{\zeta}^{R}(\varepsilon)\equiv 1$. In fact, the same conclusion holds for all Dirichlet $L$-functions attached to primitive characters. In the case of Dedekind zeta functions one can take $\mathcal{C}_{\zeta_\K}^{R}(\varepsilon)\equiv n_{\K}$, and this cannot be improved due to the fact that there are always infinitely many primes which split completely~\cite[Theorem~4.37]{Narkiewicz04Book}. Note that in these cases, the strength of Theorem~\ref{thm:MainGeneral} is comparable to that of Corollary~\ref{cor:thm2}, where the existence of a polynomial Euler product is guaranteed. However, for $\zeta_{\K}(s)$ better bounds can be obtained due to the prime ideal theorem, see Section~\ref{sec:discussion2}.

\subsection{Estimates for $\cL\in\cS$ under additional assumptions.}
\label{sec:Distr}

At present, the authors are not able to remove $\varepsilon$ from Theorem~\ref{thm:MainGeneral} without imposing additional conjectures on the coefficients $a(n)$ and also keeping all the axioms for $\mathcal{S}$. In order to do so, we are proposing the following assumption.

\begin{conjecture}
\label{conj:SelbergVariant}
Let $\cL\in\cS$. Then 
\[
\sum_{p\leq x}\left|a(p)\right|^2 \leq \mathcal{C}_{\cL}^{P_1}(x)\frac{x}{\log{x}} + \mathcal{C}_{\cL}^{P_2}\frac{x}{\log^{2}{x}}
\]
for all $x\geq 2$, where $0<\mathcal{C}_{\cL}^{P_1}(x)=o(x^{\varepsilon})$ for every $\varepsilon>0$, $\mathcal{C}_{\cL}^{P_1}(x)$ is an increasing continuous function, there exists an absolute $x_0>0$ such that $x^{-1/2}\mathcal{C}_{\cL}^{P_1}(x)$ is a decreasing function for $x\geq x_0$, and $\mathcal{C}_{\cL}^{P_2}$ is a non-negative constant.
\end{conjecture}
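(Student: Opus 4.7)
Since the final statement is presented as \emph{Conjecture}~\ref{conj:SelbergVariant} rather than a theorem, the paper does not prove it; what follows is therefore a plan for how one would attempt to establish it, together with a sanity check on the classical cases. The bound is essentially a Chebyshev-type estimate on the second-moment prime sum of the Dirichlet coefficients, with a slowly growing leading constant. The key observation is that the shape of the bound is precisely what one obtains from the Rankin--Selberg convolution $\cL\otimes\overline{\cL}$ when it possesses the expected analytic properties.

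The strategy is as follows. First, introduce the Dirichlet series
$D(s)\de\sum_{p}|a(p)|^{2}(\log p)\, p^{-s}$,
which converges absolutely for $\sigma>1+2\theta$ by the Euler product axiom. If $\cL\otimes\overline{\cL}$ admits meromorphic continuation to $\C$ with at worst a simple pole at $s=1$ whose residue $r_{\cL}$ is controlled by the invariants of $\cL$, together with a standard functional equation of Selberg type, then $-(\cL\otimes\overline{\cL})'/(\cL\otimes\overline{\cL})(s)$ has Dirichlet coefficients supported on prime powers whose restriction to primes equals $|a(p)|^{2}\log p$. A contour shift / Perron argument then yields
$\sum_{p\leq x}|a(p)|^{2}\log p=r_{\cL}\, x+O(x/\log x)$,
and summation by parts converts this into the conjectured form with $\mathcal{C}_{\cL}^{P_{1}}(x)$ essentially $r_{\cL}$ (which, under reasonable growth hypotheses on the analytic conductor of $\cL\otimes\overline{\cL}$, is $o(x^{\varepsilon})$) and $\mathcal{C}_{\cL}^{P_{2}}$ absorbing the remainder. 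The monotonicity and $x^{-1/2}$-decay conditions in the conjecture are easily arranged by replacing $\mathcal{C}_{\cL}^{P_{1}}$ with a mild majorant.

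The approach is rigorous in the known subcases and recovers the bound with essentially optimal constants: for $\cL=\zeta$ the prime number theorem gives $\mathcal{C}_{\zeta}^{P_{1}}(x)=1+o(1)$; for $L(s,\chi)$ the trivial bound $|a(p)|\leq 1$ reduces matters to $\pi(x)$; for $\zeta_{\K}$ the prime ideal theorem combined with $|a(p)|\leq n_{\K}$ yields $\mathcal{C}_{\zeta_{\K}}^{P_{1}}$ polynomial in $n_{\K}$; and for cuspidal automorphic $L$-functions on $GL_{n}$ the Jacquet--Shalika theorem on the pole of the Rankin--Selberg $L$-function makes the Perron argument rigorous and produces a constant leading term.

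The main obstacle, and the reason this is stated as a conjecture, is that the four Selberg axioms do \emph{not} guarantee the existence of a Rankin--Selberg theory for $\cL\otimes\overline{\cL}$; whether this convolution even lies in $\cS$ is a central open problem in the subject. An axiomatic proof would therefore require either (i) a substantial new analytic input beyond the stated axioms, or (ii) a derivation from Selberg's orthonormality conjecture
$\sum_{p\leq x}|a(p)|^{2}/p=n_{\cL}\log\log x+O(1)$
for primitive $\cL\in\cS$, combined with a delicate Abel-summation step that upgrades a $1/p$-weighted mean to an unweighted partial sum with sufficient uniformity. Both routes appear to be well beyond current technology for the general Selberg class, which is why the statement is adopted as a working hypothesis rather than proved.
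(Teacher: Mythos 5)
This statement is a conjecture, not a theorem: the paper offers no proof of it, and you correctly recognise that none is possible from the four Selberg axioms with current technology. Your assessment agrees with the paper's own treatment, which merely records (via partial summation) that the normality conjecture, Selberg's conjecture A, its strong form, and the (strong) prime mean-square conjecture each imply Conjecture~\ref{conj:SelbergVariant} with explicit choices of $\mathcal{C}_{\cL}^{P_1}$ and $\mathcal{C}_{\cL}^{P_2}$ --- this is essentially route (ii) of your closing paragraph, and the paper's Section on discussion gives concrete instances ($\zeta_{\K}$, products of Dirichlet $L$-functions, newform $L$-functions) where such input is available unconditionally or under GRH. Your alternative heuristic via the Rankin--Selberg convolution $\cL\otimes\overline{\cL}$ is a reasonable complementary justification not spelled out in the paper, and you correctly flag that the existence of this convolution inside $\cS$ is itself open; no further verification is required since there is nothing to prove.
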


Conjecture~\ref{conj:SelbergVariant} contains many open problems that are related to the distribution of $a(n)$ on prime numbers. For example, one can conjecture that ($\cL\not\equiv1$)
\begin{equation}
\label{eq:SelbergAGeneral}
\sum_{p\leq x}\frac{\left|a(p)\right|^2}{p} = n_{\cL}\log{\log{x}} + C_{\cL} + \mathcal{R}(x)
\end{equation}
for some positive real number $n_{\cL}$, for some constant $C_{\cL}$, and for some suitably chosen remainder term. Indeed, \emph{Selberg's conjecture A} then asserts\footnote{See~\cite{SelbergOldAndNew}. He also conjectured that $n_{\cL}=1$ if $\cL$ is primitive.} $n_{\cL}\in\N$ and $\mathcal{R}(x)=O(1)$, while the \emph{strong Selberg's conjecture A} asserts\footnote{A more general version was introduced in~\cite[p.~828]{BombieriHejhal}, see also~\cite[p.~120]{SteudingBook}.} $n_{\cL}\in\N$ and $\mathcal{R}(x)\ll 1/\log{x}$. The \emph{normality conjecture}\footnote{This conjecture was introduced in~\cite[p.~257]{KacPer}, where it was shown that it implies non-vanishing of $\cL$ on the 1-line. Moreover, it was also proposed that $n_{\cL}\leq 1$ if $\cL$ is primitive.} is weaker than Selberg's conjecture A since it asserts only $\mathcal{R}(x)=o(\log{\log{x}})$. By partial summation, the \emph{prime mean-square conjecture}\footnote{This is one of the axioms of the class $\tilde{\mathcal{S}}$, see~\cite[Section~2.2]{SteudingBook}.} 
\[
\sum_{p\leq x}\left|a(p)\right|^2 \sim \kappa\frac{x}{\log{x}}, \quad \kappa>0
\]
follows from~\eqref{eq:SelbergAGeneral} for $n_{\cL}=\kappa$ and $\mathcal{R}(x)=o\left(1/\log{x}\right)$. Similarly, the \emph{strong prime mean-square conjecture}\footnote{In~\cite[p.~346]{AistleitnerPankowski2017}, this is referred to as ``the Selberg normality conjecture in the stronger form''.}
\[
\sum_{p\leq x}\left|a(p)\right|^2 = \kappa\frac{x}{\log{x}} + O\left(\frac{x}{\log^{2}{x}}\right), \quad \kappa>0
\]
follows from~\eqref{eq:SelbergAGeneral} for $n_{\cL}=\kappa$ and $\mathcal{R}(x)\ll 1/\log^{2}{x}$. Using partial summation, the above conjectures imply Conjecture~\ref{conj:SelbergVariant} with the following parameters: 
\begin{enumerate}
    \item Normality conjecture: For every $\varepsilon>0$ there exists $\mathcal{C}_{\cL}^{P}(\varepsilon)>0$ such that $\mathcal{C}_{\cL}^{P_2}=\mathcal{C}_{\cL}^{P}(\varepsilon)$ and $\mathcal{C}_{\cL}^{P_1}(x)=\varepsilon\log{x}\log{\log{x}}$. 
    \item Selberg's conjecture A: $\mathcal{C}_{\cL}^{P_2}=0$ and there is a constant $\mathcal{C}_{\cL}^{P}>0$ such that $\mathcal{C}_{\cL}^{P_1}(x)=\mathcal{C}_{\cL}^{P}\log{x}$.
    \item Strong Selberg's conjecture A: $\mathcal{C}_{\cL}^{P_2}=0$ and there is a constant $\mathcal{C}_{\cL}^{P}>0$ such that $\mathcal{C}_{\cL}^{P_1}(x)\equiv\mathcal{C}_{\cL}^{P}$.
    \item Prime mean-square conjecture: For every $\varepsilon>0$ there exists $\mathcal{C}_{\cL}^{P}(\varepsilon)>0$ such that $\mathcal{C}_{\cL}^{P_2}=\mathcal{C}_{\cL}^{P}(\varepsilon)$ and $\mathcal{C}_{\cL}^{P_1}(x)\equiv\kappa+\varepsilon$. 
    \item Strong prime mean-square conjecture: $\mathcal{C}_{\cL}^{P_2}>0$ and $\mathcal{C}_{\cL}^{P_1}(x)\equiv\kappa$.
\end{enumerate}
A hypothesis very similar to Conjecture~\ref{conj:SelbergVariant} is the following, somehow weaker, assertion.

\begin{conjecture}
\label{conj:SelbergVariant2}
Let $\cL\in\cS$. Then 
\[
\sum_{p\leq x}\left|a(p)\right|\leq \widehat{\mathcal{C}_{\cL}^{P_1}}(x)\frac{x}{\log{x}} + \widehat{\mathcal{C}_{\cL}^{P_2}}\frac{x}{\log^{2}{x}}
\]
for all $x\geq 2$, where $0<\widehat{\mathcal{C}_{\cL}^{P_1}}(x)=o(x^{\varepsilon})$ for every $\varepsilon>0$, $\widehat{\mathcal{C}_{\cL}^{P_1}}(x)$ is an increasing continuous function, there exists an absolute $x_0>0$ such that $x^{-1/4}\widehat{\mathcal{C}_{\cL}^{P_1}}(x)$ is a decreasing function for $x\geq x_0$, and $\widehat{\mathcal{C}_{\cL}^{P_2}}$ is a non-negative constant.
\end{conjecture}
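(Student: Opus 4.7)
The final statement is a conjecture, so my aim is not an unconditional proof but rather, in the same spirit as the five-item dictionary following Conjecture~\ref{conj:SelbergVariant}, to exhibit Conjecture~\ref{conj:SelbergVariant2} as a consequence of Conjecture~\ref{conj:SelbergVariant} (combined with the prime number theorem) with explicit choices of $\widehat{\mathcal{C}_{\cL}^{P_1}}$ and $\widehat{\mathcal{C}_{\cL}^{P_2}}$. This would justify calling Conjecture~\ref{conj:SelbergVariant2} ``somehow weaker'' than Conjecture~\ref{conj:SelbergVariant}, and would also cover the unconditional case under Ramanujan--Petersson.

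The core step is the Cauchy--Schwarz inequality,
\[
\sum_{p\leq x}|a(p)| \leq \pi(x)^{1/2}\left(\sum_{p\leq x}|a(p)|^{2}\right)^{1/2},
\]
combined with an effective PNT of the shape $\pi(x) = x/\log x + O\left(x/\log^{2}x\right)$. Inserting Conjecture~\ref{conj:SelbergVariant} into the second factor and splitting by means of the elementary bound $\sqrt{A+B}\leq \sqrt{A}+\sqrt{B}$ produces
\[
\sum_{p\leq x}|a(p)| \leq \sqrt{\mathcal{C}_{\cL}^{P_1}(x)}\,\frac{x}{\log x}\bigl(1+o(1)\bigr) + \sqrt{\mathcal{C}_{\cL}^{P_2}}\,\frac{x}{(\log x)^{3/2}}\bigl(1+o(1)\bigr),
\]
which matches the form of Conjecture~\ref{conj:SelbergVariant2} after taking $\widehat{\mathcal{C}_{\cL}^{P_1}}(x) = C\sqrt{\mathcal{C}_{\cL}^{P_1}(x)}$ for a numerical constant $C$ and folding the second term (which has the stronger denominator $(\log x)^{3/2}$ and is therefore trivially dominated by $\widehat{\mathcal{C}_{\cL}^{P_2}}\,x/(\log x)^{2}$ for $x\geq 2$ with a suitable $\widehat{\mathcal{C}_{\cL}^{P_2}}$).

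The regularity axioms for $\widehat{\mathcal{C}_{\cL}^{P_1}}(x)$ are inherited structurally: positivity, continuity, monotonicity, and $o(x^{\varepsilon})$-growth of $\sqrt{\mathcal{C}_{\cL}^{P_1}(x)}$ all follow immediately from the corresponding properties of $\mathcal{C}_{\cL}^{P_1}(x)$. The decisive condition---that $x^{-1/4}\widehat{\mathcal{C}_{\cL}^{P_1}}(x)$ be eventually decreasing---is exactly the square root of the condition that $x^{-1/2}\mathcal{C}_{\cL}^{P_1}(x)$ be eventually decreasing, and this matching is precisely what motivates the choice of the exponent $1/4$ in Conjecture~\ref{conj:SelbergVariant2}. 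The main (and essentially only) obstacle is bookkeeping: keeping track of the PNT error terms so that they are safely absorbed into $\widehat{\mathcal{C}_{\cL}^{P_2}}$ without inflating the leading coefficient $\widehat{\mathcal{C}_{\cL}^{P_1}}(x)$. Finally, I would record the resulting parameter dictionary, so that, for instance, the strong prime mean-square conjecture gives $\widehat{\mathcal{C}_{\cL}^{P_1}}(x)\equiv C\sqrt{\kappa}$, and the Ramanujan--Petersson bound $|a(p)|\leq C$ settles Conjecture~\ref{conj:SelbergVariant2} outright via the PNT alone.
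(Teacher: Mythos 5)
The statement is a conjecture, so the paper has no proof of it; the only justification it records is the sentence immediately following it, asserting that Conjecture~\ref{conj:SelbergVariant} implies Conjecture~\ref{conj:SelbergVariant2} via the Cauchy--Bunyakovsky--Schwarz inequality and the prime number theorem. Your proposal fleshes out exactly that implication, so in spirit you are on the paper's route, but one step of your bookkeeping is backwards and, as written, fails. Cauchy--Schwarz together with $\pi(x)\ll x/\log x$ and Conjecture~\ref{conj:SelbergVariant} gives
\[
\sum_{p\leq x}|a(p)| \ll \sqrt{\mathcal{C}_{\cL}^{P_1}(x)}\,\frac{x}{\log x} + \sqrt{\mathcal{C}_{\cL}^{P_2}}\,\frac{x}{(\log x)^{3/2}},
\]
and you claim the second term is ``trivially dominated by $\widehat{\mathcal{C}_{\cL}^{P_2}}\,x/(\log x)^{2}$''. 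It is not: for $x\geq e$ one has $(\log x)^{3/2}\leq(\log x)^{2}$, hence $x/(\log x)^{3/2}\geq x/(\log x)^{2}$, and the ratio $(\log x)^{1/2}$ tends to infinity, so no constant $\widehat{\mathcal{C}_{\cL}^{P_2}}$ can absorb this term into the secondary term. A similar issue affects the PNT error $O(x/\log^{2}x)$ once it is multiplied by the possibly unbounded factor $\sqrt{\mathcal{C}_{\cL}^{P_1}(x)}$, so it too cannot simply be shunted into the constant $\widehat{\mathcal{C}_{\cL}^{P_2}}$.

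The repair is to absorb these contributions into the \emph{main} term rather than the secondary one: since $x/(\log x)^{3/2}=(\log x)^{-1/2}\cdot x/\log x\leq(\log 2)^{-1/2}\cdot x/\log x$ for $x\geq2$, one may take $\widehat{\mathcal{C}_{\cL}^{P_1}}(x)=C_{1}\sqrt{\mathcal{C}_{\cL}^{P_1}(x)}+C_{2}$ for suitable absolute constants and any non-negative $\widehat{\mathcal{C}_{\cL}^{P_2}}$. All structural requirements survive this choice: positivity, continuity, monotonicity and the $o(x^{\varepsilon})$ growth are inherited from $\mathcal{C}_{\cL}^{P_1}$, and $x^{-1/4}\widehat{\mathcal{C}_{\cL}^{P_1}}(x)$ is eventually decreasing because it is the sum of $C_{1}\bigl(x^{-1/2}\mathcal{C}_{\cL}^{P_1}(x)\bigr)^{1/2}$, eventually decreasing by the hypothesis of Conjecture~\ref{conj:SelbergVariant}, and the decreasing function $C_{2}x^{-1/4}$ --- which, as you correctly observe, is exactly why the exponent $1/4$ appears. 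With this correction your argument is complete and coincides with the justification the paper intends.
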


Conjecture~\ref{conj:SelbergVariant2} is weaker than Conjecture~\ref{conj:SelbergVariant} in the sense that the latter clearly implies the former due to the Cauchy--Bunyakovsky--Schwarz inequality and the prime number theorem. It is also clear that if $|a(p)|\ll_{\cL} 1$, then both conjectures hold. This is believed to be true since one hopes for having $\cS=\cSP$, see Section~\ref{subsec:PolyEuler}. However, in many instances one can obtain better estimates of the same form as those from the conjectures, see Section~\ref{sec:discussion2}.

The following theorem states the results under Conjectures ~\ref{conj:SelbergVariant} or ~\ref{conj:SelbergVariant2}. Again, before providing it, we give a few definitions that are used in the results. We define
\begin{gather}
\widehat{A}\left(m,\alpha,\sigma\right) \de A\left(m,\alpha,\sigma,\sigma\right) + \frac{e^{\alpha}+1}{2\alpha}, \label{eq:WidehatA} \\
\widetilde{A}\left(m,\alpha,\sigma,\tau\right) \de \eta(\alpha,\sigma,\tau)A\left(m,\alpha,\sigma,\sigma\right)
+\frac{e^{\alpha}+1}{4\alpha}, \label{eq:TildeA}
\end{gather}
where $A\left(a,\alpha,u,\sigma\right)$ and $\eta(\alpha,\sigma,\tau)$ are defined by~\eqref{eq:A} and~\eqref{eq:eta}, respectively, together with
\begin{gather}
A_4(\sigma,\tau)\de m_3(\tau)\left(\frac{1}{1-\sigma}\left(1+\frac{1}{(1-\sigma)\log{\log{\tau}}}\right)+\frac{\log{\log{\tau}}\log{\log{\log{\tau}}}}{(\log{\tau})^{2-2\sigma}}\right), \label{eq:MainGeneralA_4} \\
A_5(\sigma,\tau)\de 
\frac{m_1(\tau)}{(1-\sigma)^{2}\log{\log{\tau}}} + m_3(\tau)\left(\frac{1}{1-\sigma}+\frac{\left((1-\sigma)\log{\log{\log{\tau}}}+1\right)\left(\log{\log{\tau}}\right)^{2}}{(\log{\tau})^{2-2\sigma}}\right). \label{eq:MainGeneralA_5}
\end{gather}
Functions $m_1(\tau)$ and $m_3(\tau)$ will be defined in Theorem~\ref{thm:MainGeneralV2}. Now, the result is formulated as follows:

\begin{theorem}
\label{thm:MainGeneralV2}
Let $\cL\in\cS$ and assume the Generalized Riemann Hypothesis for $\cL$. Fix $\alpha\geq\log{2}$ and $\delta\in(0,1/2)$. Define $m_i(\tau)$ for $i\in\{1,2,3,4\}$ in the following way: 
\begin{enumerate}
    \item If Conjecture~\ref{conj:SelbergVariant} is true for $\cL$, then
    \begin{equation}
    \label{eq:m123}
    m_1(\tau)\de \sqrt{\mathcal{C}_{\cL}^{P_1}\left(\log^{2}{\tau}\right)}, \quad m_4(\tau) \de m_2(\tau)\de \sqrt{\mathcal{C}_{\cL}^{P_1}\left(\log^{2}{\tau}\right)+\mathcal{C}_{\cL}^{P_2}}, \quad
    m_3(\tau)\de \frac{m_2(\tau)}{\sqrt{\log{\log{\tau}}}}.
    \end{equation}
    \item If Conjecture~\ref{conj:SelbergVariant2} is true for $\cL$, then
    \begin{equation}
    \label{eq:m123V2}
    m_1(\tau)\de \widehat{\mathcal{C}_{\cL}^{P_1}}\left(\log^{2}{\tau}\right), \quad m_2(\tau)\de \widehat{\mathcal{C}_{\cL}^{P_2}}, \quad
    m_3(\tau)\de \frac{m_2(\tau)}{\log{\log{\tau}}}, \quad m_4(\tau)\de m_1(\tau)+m_2(\tau).
    \end{equation}
\end{enumerate}

Then we have
\begin{flalign}
\label{eq:MainGeneralV2}
\left|\frac{\cL'}{\cL}(s)\right| &\leq \widehat{A}\left(m_1(\tau),\alpha,\sigma\right)\left(\log{\tau}\right)^{2-2\sigma} + O\left(A_4(\sigma,\tau)\left(\log{\tau}\right)^{2-2\sigma}\right) - \frac{m_1(\tau)\sigma 2^{1-\sigma}}{1-\sigma} + O\left(m_{\cL}\left(\log{\tau}\right)^{3-4\sigma}\right) \nonumber \\ 
&+ O\left(A_1\left(\mathcal{C}_{\cL}^{E},2,0,\theta,\sigma,\tau\right)\right) + O\left(\frac{A_2\left(m_4(\tau),\mathcal{C}_{\cL}^{E},0,\theta,\tau\right)}{\left(\log{\tau}\right)^{2\delta}}\right) + O_{\lambda^{+},\lambda^{-}}\left(\frac{f}{(\log{\tau})^{2\delta}}\right) \nonumber \\ 
&+ O\left(\frac{\sdeg+m_{\cL}}{\left(\log{\tau}\right)^{2\delta}}+m_{\cL}\left(\frac{\sq}{\tau}\right)^{\frac{2}{\sdeg}}\left(\log{\tau}\right)^{1-2\delta}\right)
\end{flalign}
and
\begin{flalign}
\label{eq:MainGeneralLogV2}
\left|\log{\cL(s)}\right| &\leq 
\widetilde{A}\left(m_1(\tau),\alpha,\sigma,\tau\right)\frac{\left(\log{\tau}\right)^{2-2\sigma}}{\log{\log{\tau}}} + O\left(A_5(\sigma,\tau)\frac{\left(\log{\tau}\right)^{2-2\sigma}}{\log{\log{\tau}}}\right)
- \frac{m_1(\tau)\eta(\alpha,\sigma,\tau)}{(1-\sigma)\log{\log{\tau}}} \nonumber \\
&+ m_1(\tau)\log{\left(2\log{\log{\tau}}\right)}
+ O\left(\frac{m_{\cL}\left(\log{\tau}\right)^{3-4\sigma}}{\log{\log{\tau}}}\right) 
+ O\left(A_1\left(\mathcal{C}_{\cL}^{E},1,0,\theta,\sigma,\tau\right)\right) \nonumber \\ 
&+ O\left(\frac{A_2\left(m_4(\tau),\mathcal{C}_{\cL}^{E},0,\theta,\tau\right)+\sdeg+m_{\cL}}{\left(\log{\tau}\right)^{2\delta}\log{\log{\tau}}}\right) + O_{\lambda^{+},\lambda^{-}}\left(\frac{f}{(\log{\tau})^{2\delta}\log{\log{\tau}}}\right) \nonumber \\ 
&+ O\left(A_3\left(m_4(\tau),\mathcal{C}_{\cL}^{E},0,\theta,\tau\right)\right) 
+ O\left(m_{\cL}\left(\frac{\sq}{\tau}\right)^{\frac{2}{\sdeg}}\frac{\left(\log{\tau}\right)^{1-2\delta}}{\log{\log{\tau}}}\right)
\end{flalign}
for $1/2+\delta\leq\sigma<1$ and sufficiently large $\tau$ and $|t|\geq\mu^{+}/\lambda^{-}$. 
Functions $A_1\left(a,k,\varepsilon,\theta,\sigma,\tau\right)$, $A_2\left(a,b,\varepsilon,\theta,\tau\right)$ and $A_3\left(a,b,\varepsilon,\theta,\tau\right)$ are defined by~\eqref{eq:MainGeneralA_1},~\eqref{eq:MainGeneralA_2} and~\eqref{eq:MainGeneralA_3}, respectively. The implied constants are absolute.
\end{theorem}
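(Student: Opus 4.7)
The plan is to mirror the architecture of the proof of Theorem~\ref{thm:MainGeneral}, replacing the role of the pointwise Ramanujan estimate $|a(p)|\leq\mathcal{C}_{\cL}^{R}(\varepsilon)p^{\varepsilon}$ by the averaged information carried by Conjectures~\ref{conj:SelbergVariant} and~\ref{conj:SelbergVariant2}. I begin from the Selberg moment formula of Section~\ref{sec:SMF}, evaluated at $s=\sigma+\ie t$, which decomposes $\left(\cL'/\cL\right)(s)$ into a weighted prime-power sum cut off at a parameter $x$, the contributions from the gamma factors and the pole at $s=1$ of the functional equation, and a sum over the non-trivial zeros of $\cL$ tested against a bandlimited majorant from Section~\ref{sec:SumZeros}. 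As in the proof of Theorem~\ref{thm:MainGeneral} I select $x=\log{\tau}$, so that the prime-power sum is effectively supported on $n\leq\log^{2}{\tau}$; this explains why $\log^{2}{\tau}$ appears as the argument of $\mathcal{C}_{\cL}^{P_{1}}$ and $\widehat{\mathcal{C}_{\cL}^{P_{1}}}$ inside $m_{1}(\tau)$.

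The decisive modification is in the estimation of the prime sum $\sum_{p\leq x^{2}}|a(p)|\log{p}/p^{\sigma}$ against the bandlimited weight; the higher prime powers ($k\geq 2$) are handled exactly as in Section~\ref{sec:SumsPrimes} using $|b(p^{k})|\leq\mathcal{C}_{\cL}^{E}p^{k\theta}$, producing the $\mathcal{C}_{\cL}^{E}$-dependent entries in $A_{1}$, $A_{2}$ and $A_{3}$. Under Conjecture~\ref{conj:SelbergVariant} I invoke the Cauchy--Bunyakovsky--Schwarz inequality, $\sum_{p\leq y}|a(p)|\log{p}\leq\bigl(\sum_{p\leq y}|a(p)|^{2}\bigr)^{1/2}\bigl(\sum_{p\leq y}\log^{2}{p}\bigr)^{1/2}$, then apply the prime number theorem to the second factor and the conjectural bound to the first; partial summation against $p^{-\sigma}$, supported by the monotonicity of $y^{-1/2}\mathcal{C}_{\cL}^{P_{1}}(y)$, converts this into the factor $m_{1}(\tau)=\sqrt{\mathcal{C}_{\cL}^{P_{1}}(\log^{2}{\tau})}$ multiplying the leading shape $(\log{\tau})^{2-2\sigma}$, while the $\mathcal{C}_{\cL}^{P_{2}}$-part combines with the dyadic tail to give the $m_{2}$, $m_{3}$ and $m_{4}$ contributions. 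Under Conjecture~\ref{conj:SelbergVariant2} I bypass Cauchy--Schwarz and apply the linear bound directly via $\sum_{p\leq y}|a(p)|\log{p}\leq\log{y}\sum_{p\leq y}|a(p)|$, the monotonicity of $y^{-1/4}\widehat{\mathcal{C}_{\cL}^{P_{1}}}(y)$ justifying partial summation; this replaces $\sqrt{\mathcal{C}_{\cL}^{P_{1}}}$ by the linear $\widehat{\mathcal{C}_{\cL}^{P_{1}}}$ in $m_{1}(\tau)$ and accounts for the second set of definitions~\eqref{eq:m123V2}.

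The remaining ingredients are imported essentially verbatim from the proof of Theorem~\ref{thm:MainGeneral}. The sum over the non-trivial zeros is controlled via the Guinand--Weil exact formula of Section~\ref{sec:SumZeros} paired with the bandlimited majorant governed by the parameter $\alpha$, and this is where $A\left(m_{1}(\tau),\alpha,\sigma,\sigma\right)$ and the zero-side constant $(e^{\alpha}+1)/(2\alpha)$ merge into $\widehat{A}$ respectively $\widetilde{A}$ via~\eqref{eq:WidehatA} and~\eqref{eq:TildeA}; the fixed $\delta\in(0,1/2)$ sits inside the contour shift and dictates the exponent $2\delta$ in the lower-order error terms. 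The functional equation and pole produce the $m_{\cL}$, $\sdeg$, $f$, $\lambda^{\pm}$ and $(\sq/\tau)^{2/\sdeg}$ contributions in the same manner as in Theorem~\ref{thm:MainGeneral}. Finally, the estimate for $\log{\cL(s)}$ is obtained by integrating the bound on $\cL'/\cL$ along a horizontal segment from $\sigma+\ie t$ to $\sigma_{0}+\ie t$ for $\sigma_{0}$ slightly larger than $1$ (using the absolute convergence at $\sigma_{0}$ to control the boundary term); choosing $\sigma_{0}-\sigma$ of order $1/\log{\log{\tau}}$ produces the gain of a factor $1/\log{\log{\tau}}$ in the main term and is the origin of the $\eta(\alpha,\sigma,\tau)$ prefactor.

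The main obstacle I anticipate is bookkeeping rather than a single hard estimate: tracking how the two different functional forms of $m_{1},m_{2},m_{3},m_{4}$ arise in parallel under the two conjectures, while ensuring that in each case the monotonicity hypothesis built into the conjecture is exactly what justifies the partial-summation step. In particular, the appearance of $m_{3}(\tau)=m_{2}(\tau)/\sqrt{\log{\log{\tau}}}$ under Conjecture~\ref{conj:SelbergVariant} versus $m_{3}(\tau)=m_{2}(\tau)/\log{\log{\tau}}$ under Conjecture~\ref{conj:SelbergVariant2} reflects the one-half-exponent loss inherent in the Cauchy--Schwarz route, and verifying that this is the correct size in the $(1-\sigma)^{-1}$ and $(1-\sigma)^{-2}$ terms of $A_{4}$ and $A_{5}$ is the step that most deserves care.
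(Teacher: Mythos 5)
Your overall architecture is the paper's: Selberg moment formula, bandlimited majorant for the zero sum, and---the genuinely new ingredient for this theorem---replacing the pointwise Ramanujan bound by the averaged input of the conjectures. Your treatment of the prime sums is exactly Lemma~\ref{lem:GeneralBoundForSV2}: under Conjecture~\ref{conj:SelbergVariant} one applies Cauchy--Bunyakovsky--Schwarz and the prime number theorem to get $\widetilde{\psi}_{\cL}(u)\leq u\sqrt{\mathcal{C}_{\cL}^{P_1}(\log^2\tau)}+O(\cdot)$ as in~\eqref{eq:main2}, under Conjecture~\ref{conj:SelbergVariant2} one uses the linear bound directly as in~\eqref{eq:main3}, and partial summation then produces $m_1,\dots,m_4$; your remark that the $\sqrt{\log\log\tau}$ versus $\log\log\tau$ discrepancy in $m_3$ is the Cauchy--Schwarz half-power loss is correct. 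So the heart of the proof is in place.

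There are, however, two concrete misstatements that would derail the execution. First, the parameters: the paper takes $y=\exp\left(\alpha/(\sigma-\tfrac12)\right)$ (bounded) and $x=y^{-1}\log^2\tau$, so that $xy=\log^2\tau$; the constant $\widehat{A}(m_1(\tau),\alpha,\sigma)$ and the zero-sum coefficient $(e^{\alpha}+1)/(2\alpha)$ both come from this specific $y$, and $\alpha$ enters through $y$, \emph{not} through the bandlimited majorant (whose parameter is $\Delta=\tfrac{1}{\pi}\log\log\tau$). Your choice ``$x=\log\tau$'' with support up to $\log^2\tau$ forces $y=\log\tau$, which destroys the balance in~\eqref{eq:SumOverZeros}: the prefactor $(xy)^{1/2-\sigma}(1+y^{\sigma-1/2})/\log y$ would then make the zero-sum contribution dominate the main term, and the stated constants would not emerge. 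Second, the $\log\cL$ bound: integrating the pointwise bound on $\cL'/\cL$ to a point $\sigma_0$ slightly larger than $1$ with $\sigma_0-\sigma\asymp1/\log\log\tau$ is inconsistent with $\sigma\leq1-\delta$, and in any case $\eta(\alpha,\sigma,\tau)$ does not arise from the choice of endpoint. The paper instead integrates \emph{inside} the moment formula, reducing matters to the weighted prime sum $\widehat{S}_{\cL,x,y}(\sigma)=\sum_{n\le x}|\Lambda_{\cL}(n)|/(n^{\sigma}\log n)+\cdots$ as in~\eqref{eq:mainForLog}; the factor $1/\log n\approx1/\log x$ is what produces both the $1/\log\log\tau$ gain and, via $1/\log x=\eta(\alpha,\sigma,\tau)/\log\log\tau$, the $\eta$ prefactor, while the term $m_1(\tau)\log(2\log\log\tau)$ comes from $\int_2^x du/(u\log u)$. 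Without this step you cannot recover the exact (non-$O$) main terms $\widetilde{A}$, $-m_1\eta/((1-\sigma)\log\log\tau)$ and $m_1\log(2\log\log\tau)$ of~\eqref{eq:MainGeneralLogV2}.
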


\begin{remark}
Note that the second terms in~\eqref{eq:MainGeneralV2} and~\eqref{eq:MainGeneralLogV2} are asymptotically smaller than the main terms.
\end{remark}

\begin{theorem}
\label{thm:MainGeneral1line}
Let $\cL\in\cS$ and assume the Generalized Riemann Hypothesis for $\cL$. Define $m_i(\tau)$ for $i\in\{1,2,3,4\}$ by~\eqref{eq:m123} if Conjecture~\ref{conj:SelbergVariant} is true for $\cL$, and by~\eqref{eq:m123V2} if Conjecture~\ref{conj:SelbergVariant2} is true for $\cL$. Then
\begin{multline}
\label{eq:MainGeneral1lineV1}
\left|\frac{\cL'}{\cL}(1+\ie t)\right| \leq \left(2m_1(\tau)+O\left(m_3(\tau)\log{\log{\log{\tau}}}\right)\right)\log{\log{\tau}} + O\left(1+\frac{\mathcal{C}_{\cL}^{E}}{\left(\frac{1}{2}-\theta\right)^{3}}\right) \\
+ O\left(\frac{m_4(\tau)\log{\log{\tau}}+\sdeg+m_{\cL}}{\log{\tau}}+m_{\cL}\left(\frac{q_{\cL}}{\tau}\right)^{\frac{2}{\sdeg}}\right) + O_{\lambda^{+},\lambda^{-}}\left(\frac{f}{\log{\tau}}\right)
\end{multline}
and
\begin{multline}
\label{eq:MainGeneral1lineV2}
\left|\log{\cL(1+\ie t)}\right| \leq m_1(\tau)\left(\log{\left(2\log{\log{\tau}}\right)}+2\right) + O\left(m_2(\tau)+\frac{\mathcal{C}_{\cL}^{E}}{\left(\frac{1}{2}-\theta\right)^{2}}\left(1+\frac{1}{\left(\frac{1}{2}-\theta\right)\log{\tau}\log{\log{\tau}}}\right)\right) \\
+ O\left(\frac{1+m_{\cL}\left(\frac{q_{\cL}}{\tau}\right)^{\frac{2}{\sdeg}}}{\log{\log{\tau}}}+\frac{m_4(\tau)}{\log{\tau}}+\frac{\sdeg+m_{\cL}}{\log{\tau}\log{\log{\tau}}}\right) + O_{\lambda^{+},\lambda^{-}}\left(\frac{f}{\log{\tau}\log{\log{\tau}}}\right)
\end{multline}
for sufficiently large $\tau$ and $|t|\geq\mu^{+}/\lambda^{-}$. The implied constants are absolute.
\end{theorem}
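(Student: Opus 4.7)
The plan is to adapt the framework of Theorems~\ref{thm:MainGeneral} and~\ref{thm:MainGeneralV2} to the point $s=1+\ie t$. One cannot simply take $\sigma\to 1^{-}$ in~\eqref{eq:MainGeneralV2} because the coefficient $\widehat{A}(m_1(\tau),\alpha,\sigma)$ from~\eqref{eq:WidehatA} diverges there; instead, I would re-apply Selberg's moment formula of Section~\ref{sec:SMF} directly at $\sigma=1$, with the truncation length for the prime side fixed at $x=\log^{2}{\tau}$.

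For~\eqref{eq:MainGeneral1lineV1}, this moment formula expresses $\cL'/\cL(1+\ie t)$ as a weighted prime-power sum minus a bandlimited sum over the non-trivial zeros, up to contributions from the gamma factors and from the possible pole at $s=1$. Splitting the prime-power sum as in~\eqref{eq:MainForPsi}, the main part $\sum_{p\leq x}|a(p)|\log p/p$ is controlled by partial summation and Conjecture~\ref{conj:SelbergVariant} or~\ref{conj:SelbergVariant2}, yielding $\sum_{p\leq x}|a(p)|\log p/p \leq m_1(\tau)\log x + O(m_3(\tau)\log\log\log\tau\cdot\log\log\tau)$; because $\log x=2\log\log\tau$, this produces the main term of~\eqref{eq:MainGeneral1lineV1}, with the constant $2$ arising directly from the choice of $x$. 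The prime-power tail of $\widetilde{\psi}_\cL$ contributes the $O(\mathcal{C}_\cL^{E}/(1/2-\theta)^{3})$ term; the Guinand--Weil zero sum of Section~\ref{sec:SumZeros}, under GRH, together with the gamma-factor and pole bookkeeping from~\eqref{eq:lambdapm}, yields the remaining $O$-terms uniformly in the Selberg-class invariants.

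For~\eqref{eq:MainGeneral1lineV2} I would integrate along the horizontal ray, writing $\log\cL(1+\ie t)=-\int_{1}^{\infty}(\cL'/\cL)(\sigma+\ie t)\dif{\sigma}$. On the tail $\sigma\geq 1+1/\log\log\tau$ the Dirichlet series converges absolutely and $|\cL'/\cL(\sigma+\ie t)|\leq\sum_{p\leq x}|a(p)|\log p/p^{\sigma}$ up to the usual prime-power correction; term-by-term integration, followed by a second partial summation with $x=\log^{2}{\tau}$, gives $\int\sum_{p\leq x}|a(p)|\log p/p^{\sigma}\dif{\sigma}=\sum_{p\leq x}|a(p)|/p\leq m_1(\tau)\log(2\log\log\tau)+O(m_2(\tau))$, producing the main term of~\eqref{eq:MainGeneral1lineV2}. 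The additive constant $2m_1(\tau)$ arises from the short transition segment $\sigma\in[1,1+1/\log\log\tau]$, on which the derivative bound~\eqref{eq:MainGeneral1lineV1} is used directly and integrated.

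The main obstacle is to calibrate the truncation $x$ and the bandlimited test functions so that the precise numerical constants ($2$ in~\eqref{eq:MainGeneral1lineV1} and $+2$ in~\eqref{eq:MainGeneral1lineV2}) appear in front of $m_1(\tau)$, while simultaneously keeping all error terms -- those from the bandlimited-majorant zero sum, from the prime-power tail of $\widetilde{\psi}_\cL$, from the gamma-factor Stirling expansion, and from the pole contribution $m_\cL(\sq/\tau)^{2/\sdeg}$ -- strictly subordinate to the main term \emph{uniformly} in $\sdeg,\sq,m_\cL,f,\lambda^{\pm},\mu^{+},\mathcal{C}_\cL^{E}$, and the constants supplied by Conjecture~\ref{conj:SelbergVariant} or~\ref{conj:SelbergVariant2}.
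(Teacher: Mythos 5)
Your treatment of~\eqref{eq:MainGeneral1lineV1} is essentially the paper's: Selberg's moment formula at $\sigma=1$ with smoothed truncation $xy=\log^{2}{\tau}$ (so that the factor $2$ comes from $\log{(xy)}=2\log{\log{\tau}}$), the truncated prime sum handled by partial summation under Conjecture~\ref{conj:SelbergVariant} or~\ref{conj:SelbergVariant2} (this is exactly Lemma~\ref{lem:GeneralBoundFor1Line}), the higher prime powers giving the $\mathcal{C}_{\cL}^{E}\left(\tfrac12-\theta\right)^{-3}$ term, and the Guinand--Weil majorant bound for the zero sum together with the gamma-factor and pole terms. That half is fine.

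Your route to~\eqref{eq:MainGeneral1lineV2}, however, has a genuine gap. On the ray $\sigma\geq 1+1/\log{\log{\tau}}$ you replace $\left(\cL'/\cL\right)(\sigma+\ie t)$ by the truncated polynomial $\sum_{n\leq x}\left|\Lambda_{\cL}(n)\right|n^{-\sigma}$ ``up to the usual prime-power correction'', but the discarded tail is not a correction there: for $\sigma=1+1/\log{\log{\tau}}$ and $x=\log^{2}{\tau}$ one has $\sum_{p>x}\log{p}\cdot p^{-\sigma}\asymp x^{1-\sigma}/(\sigma-1)\asymp e^{-2}\log{\log{\tau}}$ already when $|a(p)|\asymp 1$, i.e.\ of the same order as the quantity being bounded. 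More seriously, Conjectures~\ref{conj:SelbergVariant} and~\ref{conj:SelbergVariant2} give no uniform control of $\sum_{p>\log^{2}{\tau}}|a(p)|p^{-\sigma}$: for $u>\log^{2}{\tau}$ the only available information is $\widehat{\mathcal{C}_{\cL}^{P_1}}(u)=o(u^{\varepsilon})$ (not uniform in $\cL$) together with the monotonicity of $u^{-1/4}\widehat{\mathcal{C}_{\cL}^{P_1}}(u)$, and the latter only yields $\widehat{\mathcal{C}_{\cL}^{P_1}}(u)\leq\widehat{\mathcal{C}_{\cL}^{P_1}}(x)(u/x)^{1/4}$, which after partial summation leads to $\int_{x}^{\infty}u^{-3/4-1/\log{\log{\tau}}}\,\dif{u}$ --- a divergent integral. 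So the tail of the untruncated Dirichlet series near the $1$-line cannot be absorbed into the stated error terms by this argument.

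The paper avoids this entirely by never leaving the strip: it writes $\log{\cL(1+\ie t)}=\log{\cL(3/2+\ie t)}-\int_{1}^{3/2}\left(\cL'/\cL\right)(\sigma'+\ie t)\dif{\sigma'}$, inserts the moment formula into the integral as in~\eqref{eq:mainForLog}, and the only infinite tail that ever appears is $\mathcal{E}_{x}=\sum_{n>x}\left|\Lambda_{\cL}(n)\right|n^{-3/2}/\log{n}$, evaluated at $\sigma=3/2$ where it converges absolutely and is small by~\eqref{eq:GeneralBoundForSV2_3}. Consequently the whole main term $m_1(\tau)\left(\log{\left(2\log{\log{\tau}}\right)}+2\right)$ comes from the single weighted truncated sum $\widehat{S}_{\cL,x,y}(1)$ of Lemma~\ref{lem:GeneralBoundFor1Line}; the $+2$ is the numerical constant $\frac{1}{\log{2}}-\log{\log{2}}+\frac{\log{y}}{2\log{x}}\leq 2$ arising inside that sum, not a contribution from a transition segment near $\sigma=1$ (there is no such segment in the paper's argument, and note that integrating~\eqref{eq:MainGeneral1lineV1} over $[1,1+1/\log{\log{\tau}}]$ would in any case require that bound for $\sigma>1$, which the theorem does not assert). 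If you want to keep your two-segment decomposition, you must still run it through the Selberg moment formula on the outer segment so that only the truncated sum and the (rapidly convergent) zero and gamma contributions appear; at that point the decomposition collapses back to the paper's.
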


\begin{remark}
\label{rem:Main2}
In relation to the (resp.~strong) prime mean-square conjecture, one could drop the square and hypothesize that 
\begin{equation}
\label{eq:PrimeMean}
\sum_{p\leq x}\left|a(p)\right| \sim \kappa\frac{x}{\log{x}} \quad \left(\textrm{resp.}\; \sum_{p\leq x}\left|a(p)\right| = \kappa\frac{x}{\log{x}} + O\left(\frac{x}{\log^{2}{x}}\right)\right)
\end{equation}
for $\kappa>0$. If this is true, then for every $\varepsilon>0$ Conjecture~\ref{conj:SelbergVariant2} holds with $\widehat{\mathcal{C}_{\cL}^{P_1}}(x)\equiv\kappa+\varepsilon$ and $\widehat{\mathcal{C}_{\cL}^{P_2}}$ is a constant that depends also on $\varepsilon$ (resp.~Conjecture~\ref{conj:SelbergVariant2} holds with $\widehat{\mathcal{C}_{\cL}^{P_1}}(x)\equiv\kappa$). Therefore, Theorems~\ref{thm:MainGeneralV2} and~\ref{thm:MainGeneral1line} for $\log{\cL(s)}$ provide a precise form of a similar result by Aistleitner and Pa\'{n}kowski, see~\cite[Proposition 1.1]{AistleitnerPankowski2017}.
\end{remark}

\begin{remark}
\label{thm:1Conjectures}
Let $\cL\in\cS$ be entire and assume the Generalized Riemann Hypothesis for $\cL$. Define $m_i(\sq)$ for $i\in\{1,2,3,4\}$ as in Theorem \ref{thm:MainGeneralV2}. When we replace $m_\cL$ with $0$, $\tau$ with $\sq$, $O(\sdeg/\log{\tau})$ with $O_{\lambda^-,\mu^+}(\sdeg/\log{\sq})$ and $O(\sdeg/(\log{\tau} \log{\log{\tau}}))$ with $O_{\lambda^-,\mu^+}(\sdeg/(\log{\sq}\log{\log{\sq}}))$ in \eqref{eq:MainGeneral1lineV1} and \eqref{eq:MainGeneral1lineV2}, we obtain estimates for the case $t=0$ when $\sq$ is large enough.
\end{remark}

\subsection{Estimates when $\cL$ has a polynomial Euler product}
\label{subsec:PolyEuler}

The main reason for a contribution of $\varepsilon$ in~\eqref{eq:MainGeneral} and~\eqref{eq:MainGeneralLog} is because of using $|a(p)|\leq\mathcal{C}_{\cL}^{R}(\varepsilon)p^{\varepsilon}$ in the estimation of the main term in~\eqref{eq:MainForPsi}, see Section~\ref{sec:PrimesForS}. However, we can completely avoid~\eqref{eq:MainForPsi} by assuming that $\cL(s)$ has a polynomial Euler product representation, i.e., axiom~(4) is replaced by
\[
\mathcal{L}(s) = \prod_{p}\prod_{j=1}^{m}\left(1-\frac{\alpha_j(p)}{p^s}\right)^{-1},
\]
where $m\in\N$ is the \emph{order of a polynomial Euler product} and $\alpha_j(p)\in\C$, $1\leq j\leq m$, are some functions which are defined for every prime number $p$. Denote by $\cSP$ the class of such functions. If $\cL\in\cSP$, then
\[
b\left(p^k\right) = \frac{1}{k}\sum_{j=1}^{m}\left(\alpha_j(p)\right)^k,
\]
and because $\left|\alpha_j(p)\right|\leq 1$ for $1\leq j\leq m$, see~\cite[Lemma 2.2]{SteudingBook}, we have that
\begin{equation}
\label{eq:BoundOnLambda}
\left|\Lambda_{\cL}(n)\right|\leq m\Lambda(n).
\end{equation}
Note that then $\theta=0$ and $\mathcal{C}_{\cL}^{E}=m$. Of course $\cSP\subseteq\cS$ and it is conjectured\footnote{This is due to conjectures belonging to the Langlands program, namely that the most general $L$-function should be a product of automorphic $L$-functions, and that for these functions the Ramanujan--Petersson conjecture should hold.} that $\cSP=\cS$. We will show that inequality~\eqref{eq:BoundOnLambda} indeed guarantees the expected conditional estimates for the logarithmic derivative and the logarithm of $\cL(s)$, see Theorem~\ref{thm:MainNonExplicit}. Remember that $\lambda^{+}$, $\lambda^{-}$ and $\mu^{+}$ are as in~\eqref{eq:lambdapm}. 

\begin{theorem}
\label{thm:MainNonExplicit}
Let $\cL\in\cSP$ with a polynomial Euler product of order $m$. Assume the Generalized Riemann Hypothesis for $\cL$ and the Riemann Hypothesis for $\zeta(s)$. Fix $\alpha\in[\log{2},2)$. Then
\begin{equation}
\label{eq:MainNonExplicit}
\left|\frac{\cL'}{\cL}(s)\right| \leq \left\{
                                      \begin{array}{ll}
                                        \widehat{A}\left(m,\alpha,\sigma\right)\left(\log{\tau}\right)^{2-2\sigma} - \frac{m\sigma 2^{1-\sigma}}{1-\sigma} & \\
                                        +\;
                                        O\left(m_{\cL}\left(\log{\tau}\right)^{3-4\sigma} + \frac{m}{(2\sigma-1)^3}\right)
                                        &
                                        \\
                                        +\;
                                        O\left(\frac{\sdeg+m_{\cL}}{\left(\log{\tau}\right)^{2\sigma-1}}
                                        +m_{\cL}\left(\frac{\sq}{\tau}\right)^{\frac{2}{\sdeg}}\left(\log{\tau}\right)^{2-2\sigma}\right)
                                        &
                                        \\
                                        +\;
                                        O_{\lambda^{+},\lambda^{-}}\left(\frac{f}{\left(\log{\tau}\right)^{(2-\alpha)\sigma}} +
                                        \frac{f}{\left(\log{\tau}\right)^{2\sigma-1}}\right),
                                        &
                                        \multicolumn{1}{l}{\smash{\raisebox{2\normalbaselineskip}{$\left|\frac{3}{4}-\sigma\right|\leq\frac{1}{4}-\frac{1}{\log{\log{\tau}}}$,}}} \\
                                        2m\log{\log{\tau}} + O\left(m+m|1-\sigma|\left(\log{\log{\tau}}\right)^{2}\right) &
                                        \\
                                        +\;
                                        O\left(\frac{\sdeg+m_{\cL}}{\log{\tau}}+m_{\cL}\left(\frac{\sq}{\tau}\right)^{\frac{2}{\sdeg}}\right) +
                                        O_{\lambda^{+},\lambda^{-}}\left(\frac{f}{\log{\tau}}\right)
                                        &
                                        \multicolumn{1}{l}{\smash{\raisebox{1\normalbaselineskip}{$|1-\sigma|\log{\log{\tau}}\leq 1$}}}
                                      \end{array}
                                      \right.
\end{equation}
and
\begin{equation}
\label{eq:MainNonExplicit2}
\left|\log{\cL(s)}\right| \leq \left\{
                               \begin{array}{ll}
                               \widetilde{A}\left(m,\alpha,\sigma,\tau\right)\frac{\left(\log{\tau}\right)^{2-2\sigma}}{\log{\log{\tau}}}
                               -\frac{m\eta(\alpha,\sigma,\tau)}{(1-\sigma)\log{\log{\tau}}}
                               + m\log{\left(2\log{\log{\tau}}\right)} & \\
                               +\;
                               O\left(\frac{m\left(\log{\tau}\right)^{2-2\sigma}}{(1-\sigma)^{2}\left(\log{\log{\tau}}\right)^{2}}
                               +\frac{m_{\cL}\left(\log{\tau}\right)^{3-4\sigma}}{\log{\log{\tau}}}+\frac{m}{(2\sigma-1)^{2}}\right)
                               & \\
                               +\;
                               O\left(\frac{\sdeg+m_{\cL}}{\left(\log{\tau}\right)^{2\sigma-1}\log{\log{\tau}}}
                               +m_{\cL}\left(\frac{\sq}{\tau}\right)^{\frac{2}{\sdeg}}\frac{\left(\log{\tau}\right)^{2-2\sigma}}{\log{\log{\tau}}}\right)
                               & \\
                               +\;
                               O\left(\frac{m}{\left(\log{\tau}\right)^{1-\alpha/2}\log{\log{\tau}}}\right)
                               +O_{\lambda^{-}}\left(\frac{f}{\left(\log{\tau}\right)^{2\sigma-1}\log{\log{\tau}}}\right)
                               & \\
                               +\;
                               O_{\lambda^{+}}\left(\frac{f}{(2-\alpha)\left(\log{\tau}\right)^{(2-\alpha)\sigma}\log{\log{\tau}}}\right),
                               &
                               \multicolumn{1}{l}{\smash{\raisebox{3\normalbaselineskip}{$\left|\frac{3}{4}-\sigma\right|\leq\frac{1}{4}-\frac{1}{\log{\log{\tau}}}$,}}} \\
                               m\log{\left(2\log{\log{\tau}}\right)} + O\left(m+\frac{\sdeg+m_{\cL}}{\log{\tau}\log{\log{\tau}}}+\left(\frac{\sq}{\tau}\right)^{\frac{2}{\sdeg}}\frac{m_{\cL}}{\log{\log{\tau}}}\right) & \\
                               +\;
                               O_{\lambda^{+},\lambda^{-}}\left(\frac{f}{\log{\tau}\log{\log{\tau}}}\right),
                               &
                               \multicolumn{1}{l}{\smash{\raisebox{1\normalbaselineskip}{$|1-\sigma|\log{\log{\tau}}\leq 1$}}}
                               \end{array}
                               \right.
\end{equation}
for sufficiently large $\tau$ and $|t|\geq\mu^{+}/\lambda^{-}$. Functions $\widehat{A}\left(m,\alpha,\sigma\right)$ and $\widetilde{A}\left(m,\alpha,\sigma,\tau\right)$ are defined by~\eqref{eq:WidehatA} and~\eqref{eq:TildeA}, respectively.
\end{theorem}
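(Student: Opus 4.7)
The plan is to follow the strategy already used for Theorems~\ref{thm:MainGeneral}--\ref{thm:MainGeneral1line}, starting from the modified Selberg moment formula reviewed in Section~\ref{sec:SMF}. The decisive simplification under the hypothesis $\cL\in\cSP$ is the pointwise bound $|\Lambda_{\cL}(n)|\leq m\Lambda(n)$ of~\eqref{eq:BoundOnLambda}. This bypasses the approximate identity~\eqref{eq:MainForPsi} entirely and allows me to set $\varepsilon=0$ and $\theta=0$ in~\eqref{def:theta}, so that $\Theta_{\theta,\varepsilon}(\sigma)=1/2-\sigma$ throughout, and to replace every occurrence of $(1+\varepsilon)\mathcal{C}_{\cL}^{R}(\varepsilon)$ that appeared in Theorem~\ref{thm:MainGeneral} by the constant $m$.

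With this in hand, I would split the moment-formula expression into (i) a contribution from prime powers, (ii) a contribution from the non-trivial zeros of $\cL$, and (iii) archimedean and polar contributions. For (i), using~\eqref{eq:BoundOnLambda} together with the RH bound $\psi(x)=x+O(\sqrt{x}\log^{2}{x})$ for $\zeta(s)$, the relevant weighted prime integrals reduce to those treated in Section~\ref{sec:SumsPrimes}, now with the clean majorant $m\Lambda(n)$ in place of $|a(p)|\log{p}$; this produces the term $A(m,\alpha,\sigma,\sigma)(\log{\tau})^{2-2\sigma}$, the boundary correction $-m\sigma 2^{1-\sigma}/(1-\sigma)$ coming from the truncation of the kernel at $x=2$, and an error of order $O(m/(2\sigma-1)^{3})$. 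For (ii), I would feed the Beurling--Selberg bandlimited majorants into the Guinand--Weil formula derived in Section~\ref{sec:SumZeros}, exactly as in~\cite{ChirreSimonicHagen}, obtaining $\frac{e^{\alpha}+1}{2\alpha}(\log{\tau})^{2-2\sigma}$; summing (i) and (ii) gives the main term $\widehat{A}(m,\alpha,\sigma)(\log{\tau})^{2-2\sigma}$ in~\eqref{eq:MainNonExplicit}. For (iii), Stirling applied to the gamma factors from the functional equation supplies the $O_{\lambda^{+},\lambda^{-}}$ contributions and the pole at $s=1$ supplies those proportional to $m_{\cL}$; the restriction $\alpha\in[\log{2},2)$ is dictated precisely by the need to keep $f/(\log{\tau})^{(2-\alpha)\sigma}$ a genuine error.

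The two branches of~\eqref{eq:MainNonExplicit} and~\eqref{eq:MainNonExplicit2} are then separated by how secondary terms balance. In the interior band $|3/4-\sigma|\leq 1/4-1/\log{\log{\tau}}$, the quantity $|1/2-\sigma|$ stays uniformly bounded away from $0$, so the minima in~\eqref{eq:MainGeneralA_1}--\eqref{eq:MainGeneralA_2} are realized by the $|\Theta(\sigma)|^{-k}$ side, and collecting (i)--(iii) together with the trivial-zero bookkeeping yields the first branch. In the tube $|1-\sigma|\log{\log{\tau}}\leq 1$ the factor $(\log{\tau})^{2-2\sigma}$ expands as $1+O(|1-\sigma|\log{\log{\tau}})$, so $\widehat{A}(m,\alpha,\sigma)(\log{\tau})^{2-2\sigma}$ collapses to $O(m)$; a careful re-examination of the Selberg moment identity in this regime, following the $\zeta(s)$ argument of~\cite[Section 14.33]{Titchmarsh} adapted to the polynomial Euler product setting, extracts the leading $2m\log{\log{\tau}}$ and leaves the secondary error $O(m|1-\sigma|(\log{\log{\tau}})^{2})$. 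The corresponding bound for $|\log{\cL(s)}|$ in each regime follows by integrating the bound for $\cL'/\cL$ along the horizontal segment from $2+\ie t$ down to $\sigma+\ie t$: the factor $1/\log{\log{\tau}}$ in the main term appears through the definite integral of $(\log{\tau})^{2-2u}$ over $u\in[\sigma,2]$, while the additive $m\log(2\log{\log{\tau}})$ near the $1$-line arises from integrating the $2m\log{\log{\tau}}$ bound across the thickness $1/\log{\log{\tau}}$ of the tube.

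The main technical obstacle I anticipate is the transition at $\sigma=1$: one must verify that the bandlimited-majorant bounds and the prime-side truncation glue continuously between the two regimes and that no hidden constant blows up as $\sigma\to 1^{-}$. Concretely, the factor $1/(2\sigma-1)^{3}$ has to be absorbed harmlessly in the first branch, while in the second branch the leading coefficient $2m$ must be extracted sharply; it is at exactly this point that the strict inequality $\alpha<2$ is essential in order to prevent the archimedean term from leaking into the main term. The dependence on $\sdeg$, $\sq$, $\lambda^{\pm}$, $\mu^{+}$, $f$ and $m_{\cL}$ is otherwise tracked uniformly in $\cL$, exactly as in the proofs of Theorems~\ref{thm:MainGeneral}--\ref{thm:MainGeneral1line}, the only real novelty being the substantially cleaner prime-side estimate made available by~\eqref{eq:BoundOnLambda}.
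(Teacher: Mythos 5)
Your overall strategy is the paper's: Selberg's moment formula with the choices~\eqref{eq:xy}, the majorization $|\Lambda_{\cL}(n)|\leq m\Lambda(n)$ from~\eqref{eq:BoundOnLambda} combined with RH for $\zeta$ in the form $\psi(u)=u+O(\sqrt{u}\log^{2}{u})$ on the prime side, the Guinand--Weil formula with the Beurling--Selberg-type majorant for the sum over zeros (Remark~\ref{rem:LemmaOverZeros}), trivial treatment of the gamma-factor and polar terms, integration of the moment formula over $[\sigma,3/2]$ via~\eqref{eq:mainForLog} for $\log{\cL}$, and the separation of the two regimes through $(\log{\tau})^{2-2\sigma}=1+O(|1-\sigma|\log{\log{\tau}})$. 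One cosmetic remark: the functions $A_1$, $A_2$ and the minima over $|\Theta_{\theta,\varepsilon}(\sigma)|^{-k}$ play no role here; in the $\cSP$ setting the corresponding contributions are bounded directly by Lemmas~\ref{sec:Ex} and~\ref{sec:SforSP}.

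There is, however, one concrete misstep in your derivation of the second branch of~\eqref{eq:MainNonExplicit2}. You assert that the secondary main term $m\log{\left(2\log{\log{\tau}}\right)}$ ``arises from integrating the $2m\log{\log{\tau}}$ bound across the thickness $1/\log{\log{\tau}}$ of the tube''; that integral is $O(m)$, and it cannot produce a term that sits outside the $O(\cdot)$. Executed literally, your argument would account only for the segment $|1-\sigma'|\leq 1/\log{\log{\tau}}$ and would miss the dominant contribution, which in the ``integrate the pointwise bound'' picture comes from the range $\sigma'\geq 1+1/\log{\log{\tau}}$, where $|(\cL'/\cL)(\sigma'+\ie t)|\ll m/(\sigma'-1)$ integrates to $m\log{\log{\log{\tau}}}+O(m)$. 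The paper instead never integrates the final upper bound: it integrates the moment formula term by term, so the prime contribution becomes $\widehat{S}_{\cL,x,y}(\sigma)$ of~\eqref{eq:SHatEx}, and the term $m\log{\left(2\log{\log{\tau}}\right)}$ is the Mertens-type quantity $m\log{\log{x}}$ with $x\asymp(\log{\tau})^{2}$, see~\eqref{eq:SHatAux1On1} and~\eqref{eq:SHat2}--\eqref{eq:SHatOn1}. The same mechanism (the $\log{\log{x}}$ from $\int_{2}^{x}\dif{u}/(u^{\sigma}\log{u})$) is also what produces this term in the first branch, so the fix is uniform across both regimes; with that correction your outline matches the paper's proof.
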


\begin{corollary}
\label{cor:thm2}
Under the same assumptions as in Theorem \ref{thm:MainNonExplicit}, we then have
\begin{gather}
\left|\frac{\cL'}{\cL}(s)\right| \leq \left(\widehat{A}(m,\alpha,\sigma)+o(1)\right)\left(\log{\tau}\right)^{2-2\sigma}, \label{eq:LogDerSpecial} \\
\left|\log{\cL(s)}\right| \leq \left(\widetilde{A}\left(m,\alpha,\sigma,\tau\right)+o(1)\right)\frac{\left(\log{\tau}\right)^{2-2\sigma}}{\log{\log{\tau}}} \label{eq:LogSpecial}
\end{gather}
for $1/2+\delta\leq\sigma\leq1-\delta$, fixed $\delta\in(0,1/4)$ and $\tau\to\infty$, where the implied $o$-constants may depend on $\cL$. 
\end{corollary}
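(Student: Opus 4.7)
The plan is to read off both asymptotics directly from Theorem~\ref{thm:MainNonExplicit} by restricting to the strip $1/2+\delta\leq\sigma\leq 1-\delta$, which is a proper sub-interval of the range handled by the first case of that theorem. The argument is essentially a term-by-term verification that each error contribution is of strictly smaller order than the claimed main term as $\tau\to\infty$, with all $\cL$-dependent invariants absorbed into the $o$-constants.

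First I would verify that the hypothesis $|3/4-\sigma|\leq 1/4-1/\log\log\tau$ of the first branch of Theorem~\ref{thm:MainNonExplicit} is satisfied. Since $1/2+\delta\leq\sigma\leq 1-\delta$ forces $|3/4-\sigma|\leq 1/4-\delta$, the desired inequality holds whenever $1/\log\log\tau\leq\delta$, that is, for all sufficiently large $\tau$ (with threshold depending on $\delta$, and hence permitted to depend on $\cL$). Thus the first branch of both~\eqref{eq:MainNonExplicit} and~\eqref{eq:MainNonExplicit2} is available throughout.

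For~\eqref{eq:LogDerSpecial} I would then divide every non-principal term of~\eqref{eq:MainNonExplicit} by $(\log\tau)^{2-2\sigma}$ and check that the ratio tends to $0$ as $\tau\to\infty$, uniformly in $\sigma$ on $[1/2+\delta,1-\delta]$. Using $1-\sigma\geq\delta$ and $2\sigma-1\geq 2\delta$, the bounded contributions $m\sigma 2^{1-\sigma}/(1-\sigma)$ and $m/(2\sigma-1)^3$ give ratio of order $(\log\tau)^{-2\delta}$; the term $m_{\cL}(\log\tau)^{3-4\sigma}$ gives ratio $m_{\cL}(\log\tau)^{1-2\sigma}\leq m_{\cL}(\log\tau)^{-2\delta}$; the terms carrying the factor $(\log\tau)^{-(2\sigma-1)}$ give ratio $O(1/\log\tau)$; the factor $m_{\cL}(\sq/\tau)^{2/\sdeg}$ itself tends to $0$ as $\tau\to\infty$ since $\sq$ and $\sdeg$ are fixed; and the term $f/(\log\tau)^{(2-\alpha)\sigma}$ gives ratio $f/(\log\tau)^{2-\alpha\sigma}$, whose exponent is at least $2-\alpha(1-\delta)>0$ because $\alpha<2$. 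Summing, the total error is $o\left((\log\tau)^{2-2\sigma}\right)$ with implied constant depending on $\cL$ and $\delta$, which yields~\eqref{eq:LogDerSpecial}.

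The proof of~\eqref{eq:LogSpecial} proceeds in exactly the same manner starting from~\eqref{eq:MainNonExplicit2}, this time dividing by $(\log\tau)^{2-2\sigma}/\log\log\tau$. The only new ingredient is the term $m\log(2\log\log\tau)$, which is absorbed because $(\log\tau)^{2\delta}/\log\log\tau\to\infty$ grows much faster than $\log\log\tau$; the other error terms are handled exactly as above, and the extra $1/\log\log\tau$ in the denominator only helps. There is no genuine obstacle here, since the Ramanujan-hypothesis constant $m$ together with $m_{\cL}$, $\sdeg$, $\sq$, $f$, $\lambda^{\pm}$ and $\mu^{+}$ are all absorbed into the $\cL$-dependent $o$-constants; the corollary is simply a clean asymptotic reading of Theorem~\ref{thm:MainNonExplicit} in the fixed interior strip of the critical strip.
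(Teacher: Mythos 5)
Your proposal is correct and matches the paper's (implicit) argument: the paper treats the corollary as an immediate consequence of Theorem~\ref{thm:MainNonExplicit}, obtained exactly as you do by noting that $[1/2+\delta,1-\delta]$ lies in the first branch for $\tau$ large and that every secondary term, divided by the main term, tends to zero using $1-\sigma\geq\delta$, $2\sigma-1\geq 2\delta$ and $\alpha<2$. Your term-by-term verification is accurate, so nothing further is needed.
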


For $s=1+\ie t$ we are able to improve upon the error terms from Theorem~\ref{thm:MainNonExplicit}.

\begin{theorem}
\label{thm:1line}
Let $\cL\in\cSP$ with a polynomial Euler product of order $m$ and assume the Generalized Riemann Hypothesis for $\cL$ and the Riemann Hypothesis for $\zeta(s)$. Fix $\alpha\geq\log{2}$. Then
\begin{multline}
\label{eq:LogDerOn1}
\left|\frac{\cL'}{\cL}(1+\ie t)\right| \leq 2m\log{\log{\tau}} - m\left(\gamma+\alpha\right) + \frac{e^{\alpha}+1}{2\alpha} \\ + O\left(\frac{m\left(\log{\log{\tau}}\right)^2}{\log{\tau}}+\frac{\sdeg+m_{\cL}}{\log{\tau}}+m_{\cL}\left(\frac{\sq}{\tau}\right)^{\frac{2}{\sdeg}}\right)
+ O_{\lambda^{+},\lambda^{-}}\left(\frac{f}{\log{\tau}}\right)
\end{multline}
and
\begin{multline}
\label{eq:LogOn1}
\left|\log{\cL(1+\ie t)}\right| \leq m\log{\left(2\log{\log{\tau}}\right)} + m\gamma + \frac{e^{\alpha}+1}{4\alpha\log{\log{\tau}}} \\ 
+ O\left(\frac{m\log{\log{\tau}}}{\log{\tau}}+\frac{\sdeg+m_{\cL}}{\log{\tau}\log{\log{\tau}}}+\frac{m_{\cL}}{\log{\log{\tau}}}\left(\frac{\sq}{\tau}\right)^{\frac{2}{\sdeg}}\right) + O_{\lambda^{+},\lambda^{-}}\left(\frac{f}{\log{\tau}\log{\log{\tau}}}\right)
\end{multline}
for sufficiently large $\tau$ and $|t|\geq\mu^{+}/\lambda^{-}$, and $\gamma$ is the Euler--Mascheroni constant.
\end{theorem}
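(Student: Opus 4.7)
The plan is to specialise the machinery developed for Theorem~\ref{thm:MainNonExplicit} to the vertical line $\sigma=1$, where the arithmetic inputs become sharper thanks to Mertens--type identities. The starting point is Selberg's moment formula from Section~\ref{sec:SMF} applied at $s=1+\ie t$, which expresses $\left(\cL'/\cL\right)(1+\ie t)$ as the sum of three contributions: a weighted sum over prime powers, a sum over non-trivial zeros of $\cL$ (analysed via the Guinand--Weil formula and bandlimited majorants as in Section~\ref{sec:SumZeros}), and an archimedean/polar term coming from the gamma factors and from $m_{\cL}$. The parameter $x\sim\left(\log\tau\right)^{2}$ used as the length of the prime sum (so that $\log x\sim 2\log\log\tau$) is the source of the factor $2$ in the leading term.

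For the prime sum, I would invoke the polynomial Euler product, which via~\eqref{eq:BoundOnLambda} gives $\left|\Lambda_{\cL}(n)\right|\leq m\Lambda(n)$. The contribution at $\sigma=1$ is therefore bounded by $m\sum_{p\le x}\log{p}/p$ times the Selberg kernel. Using Mertens' second theorem in the form $\sum_{p\le x}1/p=\log\log x+M_1+O\!\left(1/\log x\right)$ together with the standard asymptotics $\sum_{p\le x}\log p/p=\log x+M_2+o(1)$, and identifying $M_2$ with $-\gamma-\sum_{p}\sum_{k\ge 2}\log p/(kp^k)$, one obtains the main term $2m\log\log\tau-m\gamma$ for $\left(\cL'/\cL\right)(1+\ie t)$, while higher prime powers contribute only $O(m)$ absorbed into the error. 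The contribution of $\sum_{p>x}\left|a(p)\right|\log p/p^{1+\ie t}$ is absent because the kernel is supported on $n\le e^{\alpha}x$, and the transition region $x<n\le e^{\alpha}x$ is what will produce the $-m\alpha$ and $(e^{\alpha}+1)/(2\alpha)$ contributions.

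For the zero sum, I would use the bandlimited extremal majorant/minorant of $\mathrm{sgn}$ of Fourier transform supported in a finite interval, whose parameters are normalised so that the worst-case contribution of a pole of the kernel on the 1-line equals $(e^{\alpha}+1)/(2\alpha)$, precisely as in~\cite{ChirreSimonicHagen}. GRH ensures all zeros lie on $\sigma=1/2$ so the distance from $s=1+\ie t$ is at least $1/2$, which converts the zero sum into a convergent integral whose constant is dictated by the Beurling--Selberg extremal function and gives the $(e^\alpha+1)/(2\alpha)$ constant on the right-hand side. The archimedean term combines Stirling's formula applied to the digamma function at $\lambda_j(1+\ie t)+\mu_j$ with the definition of $\sq$; the $\log$ there yields $\log\tau$ contributions that cancel against a piece of the prime sum, and the residual constant produces the $-m\alpha$ shift. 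The polar and conductor terms give the error $O\!\left((\sdeg+m_{\cL})/\log\tau+m_{\cL}(\sq/\tau)^{2/\sdeg}\right)$ and the $O_{\lambda^{\pm}}(f/\log\tau)$ term.

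For the bound on $\left|\log\cL(1+\ie t)\right|$ I would repeat the same workflow but starting from the version of Selberg's moment formula for $\log\cL(s)$ (as used for Theorems~\ref{thm:MainGeneral}--\ref{thm:MainNonExplicit}), so that the prime sum is divided by an extra $\log n$ and truncation at $x\sim(\log\tau)^{2}$ turns $2m\log\log\tau$ into $m\log(2\log\log\tau)$, while $-\gamma$ flips sign because of Mertens' theorem in the multiplicative form $\prod_{p\le x}(1-1/p)^{-1}=e^{\gamma}\log x(1+o(1))$. The zero-sum constant is halved, giving $(e^{\alpha}+1)/(4\alpha\log\log\tau)$. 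The hard part, and the main bookkeeping obstacle, is to keep the implied constants absolute and uniform in every parameter of the Selberg class except where explicit subscripts $\lambda^{\pm},\mu^{+}$ are declared: this forces careful handling of the remainder in Mertens' theorem (to avoid an implicit $\cL$-dependence through $\theta$), of the Stirling expansion at general $\mu_{j}$, and of the Beurling--Selberg construction at the precise parameter value $\alpha$, checking along the way that the regions $|1-\sigma|\log\log\tau\le 1$ in Theorem~\ref{thm:MainNonExplicit} indeed degenerate to the stated cleaner statement at $\sigma=1$.
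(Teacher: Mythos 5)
Your proposal follows essentially the same route as the paper: Selberg's moment formula at $s=1+\ie t$ with $x=y^{-1}\log^{2}\tau$, $y=e^{2\alpha}$, the bound $\left|\Lambda_{\cL}(n)\right|\leq m\Lambda(n)$ from the polynomial Euler product combined with Mertens/RH estimates for the prime sum, and the Guinand--Weil formula with the bandlimited majorant of \cite{Carneiro} for the zero sum; the paper implements exactly this by feeding \eqref{eq:BoundForS1}, \eqref{eq:proofThm2} at $\sigma=1$, and \eqref{eq:R1R2Near1} into \eqref{eq:LogDerEffective}, and \eqref{eq:SHatOn1}, \eqref{eq:Ex} and the integrated zero/gamma/pole bounds into \eqref{eq:mainForLog}. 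Two pieces of your constant bookkeeping are off, though. First, the $-m\alpha$ does not come from ``the residual constant'' of the Stirling expansion of the gamma factors, nor is there any cancellation of $\log\tau$ between the archimedean term and the prime sum: the archimedean contribution $\tfrac12\log\tau$ to $\sum_{\gamma}\bigl(\sigma-\tfrac12\bigr)\bigl(\bigl(\sigma-\tfrac12\bigr)^{2}+(t-\gamma)^{2}\bigr)^{-1}$ is simply damped by the kernel prefactor $(xy)^{\frac12-\sigma}\bigl(1+y^{\sigma-\frac12}\bigr)/\log y=\bigl(e^{\alpha}+1\bigr)/(\alpha\log\tau)$ at $\sigma=1$, which is the sole source of $\bigl(e^{\alpha}+1\bigr)/(2\alpha)$. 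Second, the $-m\alpha$ lives entirely in the prime sum, and not in the transition range either: the full-weight part runs only up to $x=e^{-2\alpha}\log^{2}\tau$, giving $\sum_{n\leq x}\Lambda(n)/n\approx\log x-\gamma=2\log\log\tau-2\alpha-\gamma$, while the weighted range $x<n\leq xy$ adds back $+\tfrac12\log y=+\alpha$, for a net $2\log\log\tau-\alpha-\gamma$ (this is \eqref{eq:BoundForS1}). With those corrections the argument closes exactly as in the paper, including the halving of the zero-sum constant upon integrating from $1$ to $3/2$ for \eqref{eq:LogOn1}.
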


Note that estimate~\eqref{eq:LogDerOn1} is a generalisation of~\eqref{eq:CHS}. It also shows that, for fixed $\cL$ and sufficiently large $\tau$, one can bound the right-hand side of~\eqref{eq:LogDerOn1} by $2m\log{\log{\tau}}$. For further discussion, see Section~\ref{sec:discussion}. 

\begin{remark}
\label{thm:1EulerProduct}
Let $\cL\in\cSP$ with a polynomial Euler product of order $m$ and assume that $\cL$ is entire. Assume the Generalized Riemann Hypothesis for $\cL$ and the Riemann Hypothesis for $\zeta(s)$.  Fix $\alpha\geq\log{2}$. Then for large enough $\sq$, we can estimate $\left|\cL'(1)/\cL(1)\right|$ and $\left|\log{\cL(1)}\right|$ by setting $m_\cL=0$ and replacing $\tau$ with $\sq$, and $O\left(\sdeg/\log{\tau}\right)$ and $O\left(\sdeg/\left(\log{\tau}\log{\log{\tau}}\right)\right)$ with $O_{\lambda^{-},\mu^{+}}\left(\sdeg/\log{\sq}\right)$ and $O_{\lambda^{-},\mu^{+}}\left(\sdeg/\left(\log{\sq}\log{\log{\sq}}\right)\right)$ in \eqref{eq:LogDerOn1} and \eqref{eq:LogOn1}, respectively.
\end{remark}

\section{The main results -- explicit setting}
\label{sec:Explicit}

In this section we are stating explicit versions of Theorems~\ref{thm:MainNonExplicit} and~\ref{thm:1line} under additional assumption of the strong $\lambda$-conjecture. The first result is left to quite long form since we want to provide sharp estimates for further use. We will provide shorter explicit estimates in Section~\ref{subsec:simplfied}.

\begin{theorem}
\label{thm:MainExplicit}
Let $\cL\in\cSP$ and assume the Generalized Riemann Hypothesis for $\cL$, the Riemann Hypothesis for $\zeta(s)$ and the strong $\lambda$-conjecture. Consider the following three cases:
\begin{enumerate}
  \item Let $2\alpha_1>\alpha\geq\log{2}$ and
        \begin{gather}
        \frac{1}{2}+\frac{\alpha_1}{\log{\log{\tau}}}\leq\sigma<1, \label{eq:sigmaRange1} \\
        \tau\geq\tau_0>\max\left\{e^{\sqrt{60}},\exp{\left(2^{1/\left(2-\alpha/\alpha_1\right)}\right)},
        \exp{\left(e^{2\alpha_1}\right)}\right\}, \quad
        |t| \ge t_0 \ge \max\left\{2\max_{1\leq j\leq f}\left\{\left|\mu_j\right|\right\},1\right\}. \nonumber
        \end{gather}
  \item Let $\alpha\geq\log{2}$, $\alpha_2>0$, $\alpha_3>0$ and
        \begin{gather}
        1-\frac{\alpha_2}{\log{\log{\tau}}} \leq \sigma \leq 1+\frac{\alpha_3}{\log{\log{\tau}}}, \label{eq:sigmaRange2} \\
        \tau\geq\tau_0>\max\left\{e^{\sqrt{60}},\exp{\left(\sqrt{2}\exp{\left(\frac{\alpha}{2\sigma-1}\right)}\right)},
        \exp{\left(e^{\alpha+2\alpha_2}\right)},
        \exp{\left(e^{4\alpha_2}\right)},\exp{\left(e^{2\alpha_3}\right)}\right\}, \nonumber \\
        |t| \ge t_0 \ge \max\left\{2\max_{1\leq j\leq f}\left\{\left|\mu_j\right|\right\},1\right\}. \nonumber
        \end{gather}
  \item Let $\alpha_3>0$, $\tau>e$ and $\sigma\geq 1+\alpha_3/\log{\log{\tau}}$.
\end{enumerate}
Under the conditions from~(1) we have
\begin{flalign}
\label{eq:1stCase}
\left|\frac{\cL'}{\cL}(s)\right| &\leq \widehat{A}\left(m,\alpha,\sigma\right)\left(\log{\tau}\right)^{2-2\sigma} - \frac{m\sigma 2^{1-\sigma}}{1-\sigma} + \frac{\mathfrak{a}\left(e^{\alpha}+1\right)}{\alpha}\left(\log{\tau}\right)^{3-4\sigma} \nonumber \\
&+ \frac{m\sigma 2^{\frac{3}{2}-\sigma}\left(4+\left(2+(2\sigma-1)\log{2}\right)^{2}\right)}{8\pi(2\sigma-1)^{3}} + \left(\frac{m\left(2\left(\alpha+1\right)\sigma-1\right)\left(e^{\alpha}-1\right)}{2\pi\alpha}\right)
\dfrac{(\log\log{\tau})^2}{(2\sigma-1)(\log{\tau})^{2\sigma-1}} \nonumber \\
&+ \left(\frac{2m\left(e^{\alpha}+1\right)}{\alpha}\right)
\frac{\log{\log{\tau}}}{\left(\log{\tau}\right)^{2\sigma-1}} + \frac{\mathfrak{b}\left(e^{\alpha}+1\right)}{\alpha\left(\log{\tau}\right)^{2\sigma-1}} \nonumber \\
&+ \frac{4.3\sdeg\left(\sigma-\frac{1}{2}\right)\left(1+\left(\log{\tau_0}\right)^{-\frac{\alpha}{\alpha_1}\sigma}\right)}
{\alpha\left(\log{\tau}\right)^{\left(2-\frac{\alpha}{\alpha_1}\right)\sigma}} + \frac{m_{\cL}}{\alpha}\left(\frac{\sq}{\tau}\right)^{\frac{2}{\sdeg}}\left(\log{\tau}\right)^{2-2\sigma}
\end{flalign}
and
\begin{flalign}
\label{eq:1stCaseB}
\left|\log{\cL(s)}\right| &\leq \widetilde{A}\left(m,\alpha,\sigma,\tau\right)\frac{\left(\log{\tau}\right)^{2-2\sigma}}{\log{\log{\tau}}}
- \frac{m\eta(\alpha,\sigma,\tau)}{(1-\sigma)\log{\log{\tau}}} + m\log{\left(2\log{\log{\tau}}\right)} \nonumber \\
&+m\left(\left(\nu_{2}\eta(\alpha,\sigma,\tau)\right)^{2}\exp{\left(-\frac{2\alpha(1-\sigma)}{2\sigma-1}\right)}\right)
\frac{\left(\log{\tau}\right)^{2-2\sigma}}{(1-\sigma)^{2}\left(\log{\log{\tau}}\right)^{2}}
+ \frac{\mathfrak{a}\left(e^{\alpha}+1\right)\left(\log{\tau}\right)^{3-4\sigma}}{4\alpha\log{\log{\tau}}} \nonumber \\
&+ \frac{m2^{\frac{3}{2}-\sigma}\left(-1+\sigma\left(4-\log{2}\right)+\sigma^2\log{4}\right)}{8\pi\left(2\sigma-1\right)^{2}}
+ \frac{m}{\nu_{1}^{2}}\exp{\left(-\frac{2\alpha(1-\sigma)}{(2\sigma-1)\nu_{2}}\right)}\left(\log{\tau}\right)^{\frac{2}{\nu_{2}}(1-\sigma)} \nonumber \\
&+ m\left(\frac{1-\sigma 2^{1-\sigma}}{(1-\sigma)\log{2}} - \log{\log{2}} + \int_{0}^{\nu_1}\theta_{1}(u)\dif{u}\right)
+ \frac{m\left(e^{\alpha}-1\right)\log{\log{\tau}}}{4\pi\alpha\left(\log{\tau}\right)^{2\sigma-1}} + \frac{m\left(e^{\alpha}+1\right)}{\alpha\left(\log{\tau}\right)^{2\sigma-1}} \nonumber \\
&+ \frac{\max\{0,\mathfrak{b}\}\left(e^{\alpha}+1\right)}{2\alpha\left(\log{\tau}\right)^{2\sigma-1}\log{\log{\tau}}}
+ \frac{2m\eta\left(\alpha,\sigma,\tau\right)\exp{\left(\frac{\alpha}{2\sigma-1}\right)}}{\log{\tau}\log{\log{\tau}}}
+ \frac{5m\exp{\left(\frac{2\alpha}{2\sigma-1}\right)}\left(1+\frac{\log{\log{\tau}}}{\eta\left(\alpha,\sigma,\tau\right)}\right)}{16\pi\log^{2}{\tau}} \nonumber \\
&+ \frac{4.3\sdeg\alpha_1\left(1+\left(\log{\tau_0}\right)^{-\frac{\alpha}{2\alpha_1}}\right)}
{\alpha\left(2\alpha_1-\alpha\right)\left(\log{\tau}\right)^{\left(2-\frac{\alpha}{\alpha_1}\right)\sigma}\log{\log{\tau}}}
+ \frac{m_{\cL}\left(1+e^{\frac{\alpha}{2}}\right)}{2\alpha}\left(\frac{\sq}{\tau}\right)^{\frac{2}{\sdeg}}
\frac{\left(\log{\tau}\right)^{2-2\sigma}}{\log{\log{\tau}}}.
\end{flalign}
Here, $\widehat{A}\left(m,\alpha,\sigma\right)$ and $\widetilde{A}\left(m,\alpha,\sigma,\tau\right)$ are defined by~\eqref{eq:WidehatA} and~\eqref{eq:TildeA}, respectively, and the functions $\mathfrak{a}=\mathfrak{a}\left(m_{\cL},\alpha_1,t_0\right)$ and $\mathfrak{b}=\mathfrak{b}\left(\sdeg,m,m_{\cL},\alpha_1,t_0,\tau_0\right)$ are defined by~\eqref{eq:Funa} and~\eqref{eq:funb}, respectively. Additionally, $\nu_1>0$ and $\nu_2>1$, and $\eta(\alpha,\sigma,\tau)$ and $\theta_1(u)$ are defined by~\eqref{eq:eta} and~\eqref{eq:thetas}, respectively.

Under the conditions from~(2) we have
\begin{flalign}
\label{eq:2ndCase}
\left|\frac{\cL'}{\cL}(s)\right| &\leq 2m\log{\log{\tau}} + m\left(1-\sigma\log{2}\right) + \frac{e^{\alpha}+1}{2\alpha} \nonumber \\
&+\left(4m\theta_{1}(M)+\frac{\left(e^{\alpha}+1\right)\theta_2(M)}{\alpha\log{\log{\tau_0}}}+\frac{m\sigma(\log{2})^2\theta_1\left(\frac{M\log{2}}{2\log{\log{\tau_0}}}\right)}{\left(\log{\log{\tau_0}}\right)^2}
\right)|1-\sigma|\left(\log{\log{\tau}}\right)^{2} \nonumber \\
&+ \frac{m\sigma 2^{\frac{3}{2}-\sigma}\left(4+\left(2+(2\sigma-1)\log{2}\right)^{2}\right)}{8\pi(2\sigma-1)^{3}} + \left(\frac{m\left(2\left(\alpha+1\right)\sigma-1\right)\left(e^{\alpha}-1\right)}{2\pi\alpha}\right)
\dfrac{(\log\log{\tau})^2}{(2\sigma-1)(\log{\tau})^{2\sigma-1}} \nonumber \\
&+ \left(\frac{2m\left(e^{\alpha}+1\right)}{\alpha}\right)
\frac{\log{\log{\tau}}}{\left(\log{\tau}\right)^{2\sigma-1}} + \frac{e^{\alpha}+1}{\alpha}\left(\frac{\mathfrak{a}_1}{\left(\log{\tau}\right)^{4\sigma-3}}+\frac{\mathfrak{b}_1}{\left(\log{\tau}\right)^{2\sigma-1}}\right)
\nonumber \\
&+ \frac{4.3\sdeg e^{2\alpha_2}\left(\frac{1}{2}+\frac{\alpha_3}{\log{\log{\tau_0}}}\right)
\left(1+\exp{\left(\frac{2\alpha\left(\log{\log{\tau_0}}-\alpha_2\right)}{\log{\log{\tau_0}}-2\alpha_2}\right)}\right)}{\alpha\left(\log{\tau}\right)^2}
\nonumber \\
&+ \frac{m_{\cL}e^{2\alpha_2}}{\alpha}\left(\frac{1}{2}+\frac{\alpha_3}{\log{\log{\tau_0}}}\right)
\left(1+\exp{\left(\frac{2\alpha\alpha_3}{\log{\log{\tau_0}}+2\alpha_3}\right)}\right)\left(\frac{\sq}{\tau}\right)^{\frac{2}{\sdeg}}
\end{flalign}
and
\begin{flalign}
\label{eq:2ndCaseB}
\left|\log{\cL(s)}\right| &\leq m\log{\left(2\log{\log{\tau}}\right)} + m\left(\frac{1}{\log{2}}-\log{\log{2}}-\sigma+\frac{1}{1-\frac{\alpha}{\log{\log{\tau_0}}-2\alpha_2}}\right) \nonumber \\
&+ \left(\frac{2m\theta_1(M)}{1-\frac{\alpha}{\log{\log{\tau_0}}-2\alpha_2}}+2m\theta_1(2\alpha_2)+\frac{m\sigma\left(\log{2}\right)\theta_1\left(\frac{M\log{2}}{2\log{\log{\tau_0}}}\right)}{\log{\log{\tau_0}}}\right)\left|1-\sigma\right|\log{\log{\tau}} \nonumber \\
&+ \frac{m2^{\frac{3}{2}-\sigma}\left(-1+\sigma\left(4-\log{2}\right)+\sigma^{2}\log{4}\right)}{8\pi\left(2\sigma-1\right)^{2}} + \frac{m\left(e^{\alpha}-1\right)\log{\log{\tau}}}{4\pi\alpha\left(\log{\tau}\right)^{2\sigma-1}} + \frac{\left(e^{\alpha}+1\right)\left(\log{\tau}\right)^{2-2\sigma}}{4\alpha\log{\log{\tau}}} \nonumber \\
&+ \frac{e^{\alpha}+1}{4\alpha\log{\log{\tau}}}\left(\frac{\mathfrak{a}_{1}}{\left(\log{\tau}\right)^{4\sigma-3}}+\frac{2\max\{0,\mathfrak{b}_1\}}{\left(\log{\tau}\right)^{2\sigma-1}}\right) + \frac{m\left(e^{\alpha}+1\right)}{\alpha\left(\log{\tau}\right)^{2\sigma-1}} \nonumber \\
&+ \frac{m\exp{\left(\frac{\alpha\log{\log{\tau_0}}}{\log{\log{\tau_0}}-2\alpha_2}\right)}}{\left(1-\frac{\alpha}{\log{\log{\tau_0}}-2\alpha_2}\right)\log{\tau}\log{\log{\tau}}} + \frac{5m\exp{\left(\frac{2\alpha\log{\log{\tau_0}}}{\log{\log{\tau_0}}-2\alpha_2}\right)}\left(1+2\log{\log{\tau}}\right)}{16\pi\left(\log{\tau}\right)^{2}} \nonumber \\
&+ \frac{4.3\sdeg\left(1+\exp{\left(\frac{2\alpha\left(\log{\log{\tau_0}}-\alpha_2\right)}{\log{\log{\tau_0}}-2\alpha_2}\right)}\right)}{2\alpha\left(\log{\tau}\right)^{2\sigma}\log{\log{\tau}}} + \frac{m_{\cL}e^{2\alpha_2}\left(1+e^{\frac{\alpha}{2}}\right)}{2\alpha\log{\log{\tau}}}\left(\frac{\sq}{\tau}\right)^{\frac{2}{\sdeg}}.
\end{flalign}
Here,
\begin{equation}
\label{eq:thetas}
M\de 2\max\left\{\alpha_2,\alpha_3\right\}, \quad \textrm{and} \quad
\theta_1(u) \de \frac{e^{u}-u-1}{u^2}, \quad \theta_2(u) \de \frac{e^{u}-1}{u}
\end{equation}
for $u>0$. Additionally, $\mathfrak{a}_1\de \mathfrak{a}\left(m_{\cL},\frac{1}{2}\log{\log{\tau_0}}-\alpha_2,t_0\right)$ and $\mathfrak{b}_1\de\mathfrak{b}\left(\sdeg,m,m_{\cL},\frac{1}{2}\log{\log{\tau_0}}-\alpha_2,t_0,\tau_0\right)$ with $\mathfrak{a}$ and $\mathfrak{b}$ defined by~\eqref{eq:Funa} and~\eqref{eq:funb}, respectively.

Under the conditions from~(3) we have
\begin{equation}
\label{eq:3rdCase}
\left|\frac{\cL'}{\cL}(s)\right| \leq \frac{m}{\alpha_3}\log{\log{\tau}}, \quad \left|\log{\cL(s)}\right| \leq m\log{\left(\frac{1}{\alpha_3}\log{\log{\tau}}\right)} + \frac{m\gamma\alpha_3}{\log{\log{\tau}}},
\end{equation}
where $\gamma$ is the Euler--Mascheroni constant.
\end{theorem}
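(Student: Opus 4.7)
The plan is to follow the structural outline of Theorems~\ref{thm:MainNonExplicit} and~\ref{thm:1line} but track every constant explicitly, exploiting the strong $\lambda$-conjecture (all $\lambda_j = 1/2$) to reduce the Gamma-factor contributions to standard Stirling estimates, and the polynomial-Euler-product bound $|\Lambda_\cL(n)| \leq m\Lambda(n)$ from \eqref{eq:BoundOnLambda} to reduce prime sums to sums over ordinary $\Lambda$. The three cases require separate treatments because the size of $|1-\sigma|$ relative to $1/\log\log\tau$ determines whether $(\log\tau)^{2-2\sigma}$ is large, bounded, or small.

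For case~(3), the series for $\cL'/\cL(s)$ and $\log\cL(s)$ converge absolutely, and the termwise bound above gives
\begin{equation*}
\left|\frac{\cL'}{\cL}(s)\right| \leq -m\frac{\zeta'}{\zeta}(\sigma) \leq \frac{m}{\sigma-1}, \qquad |\log\cL(s)| \leq m\log\zeta(\sigma),
\end{equation*}
and \eqref{eq:3rdCase} then follows from the Laurent expansion of $\zeta$ at $s=1$ together with $\sigma - 1 \geq \alpha_3/\log\log\tau$ and $\tau > e$. This case is essentially cosmetic.

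For case~(1), the starting point is the explicit Selberg moment formula developed in Section~\ref{sec:SMF} applied to $\cL\in\cSP$. The prime sum is controlled via the explicit estimates of Section~\ref{sec:SumsPrimes} combined with $|\Lambda_\cL(n)|\leq m\Lambda(n)$, producing the contribution $\tfrac12\widehat{A}(m,\alpha,\sigma)(\log\tau)^{2-2\sigma} - m\sigma 2^{1-\sigma}/(1-\sigma)$ plus the residual $\mathfrak{a},\mathfrak{b}$ corrections. The sum over non-trivial zeros is estimated with the Guinand--Weil formula of Section~\ref{sec:SumZeros} together with Beurling--Selberg bandlimited majorants of bandwidth $\alpha$; this yields the other half of $\widehat{A}(m,\alpha,\sigma)(\log\tau)^{2-2\sigma}$ (namely $(e^\alpha+1)/(2\alpha)$) and the $(\log\tau)^{-(2-\alpha/\alpha_1)\sigma}$ terms, with the hypothesis $\alpha\geq\log 2$ ensuring the majorants are usable and the hypothesis $2\alpha_1>\alpha$ giving a positive exponent. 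The pole contribution appears as $m_\cL(\sq/\tau)^{2/\sdeg}(\log\tau)^{2-2\sigma}$. The constraint $\tau_0>\exp(e^{2\alpha_1})$ guarantees $\alpha_1/((2\sigma-1)\log\log\tau)\leq 1/2$, so $\eta(\alpha,\sigma,\tau)$ remains bounded. To pass from $\cL'/\cL$ to $\log\cL(s)$, we integrate the derived bound along the horizontal segment from $2+\ie t$ to $\sigma+\ie t$, splitting at an intermediate abscissa parametrised by $\nu_1,\nu_2$ in order to optimise the tradeoff between the exponential-decay and polynomial-growth portions of the integrand; this is the source of the $\nu_1,\nu_2$-dependence in \eqref{eq:1stCaseB}.

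For case~(2), the same scheme is used, but now $|1-\sigma|\leq M/(2\log\log\tau)$ with $M=2\max\{\alpha_2,\alpha_3\}$, so the factor $(\log\tau)^{2-2\sigma}$ is bounded. The principal term $2m\log\log\tau$ arises by Taylor-expanding $\widehat{A}(m,\alpha,\sigma)(\log\tau)^{2-2\sigma}$ around $\sigma=1$: the functions $\theta_1(u)=(e^u-u-1)/u^2$ and $\theta_2(u)=(e^u-1)/u$ in \eqref{eq:thetas} are exactly the remainder quotients in these expansions, which is why they carry the $|1-\sigma|(\log\log\tau)^2$ coefficients. Because $\sigma$ may now be smaller than the threshold of case~(1), the pole and $\sdeg$-contribution estimates are redone with $\alpha_1$ replaced by $\tfrac12\log\log\tau_0-\alpha_2$, giving rise to $\mathfrak{a}_1,\mathfrak{b}_1$. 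The chief obstacle throughout is bookkeeping: each inequality on $\tau_0$ in the hypotheses encodes exactly the smallness required for some particular Taylor remainder, Stirling estimate, or geometric-series tail to be dominated by the next-order term; verifying that no such condition has been omitted, and that every explicit coefficient above is in fact the one that comes out of these estimates, is where the bulk of the technical work will lie.
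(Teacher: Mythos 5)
Your overall architecture is the paper's: the explicit Selberg moment formula \eqref{eq:LogDerEffective} and \eqref{eq:mainForLog} with $x,y$ as in \eqref{eq:xy}, the prime sums reduced to $\Lambda(n)$ via \eqref{eq:BoundOnLambda} and bounded by the lemmas of Section~\ref{sec:PrimesForSP}, the zero sum handled by Lemma~\ref{lem:SumOverZeros} (Guinand--Weil plus a bandlimited majorant), the Taylor expansion with remainder quotients $\theta_1,\theta_2$ in case~(2), and case~(3) disposed of by the absolutely convergent Euler product evaluated at $\sigma_0=1+\alpha_3/\log\log\tau$. Two of your attributions are wrong, though, and the second one would derail an actual execution of the plan for \eqref{eq:1stCaseB}. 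First, the majorant in Lemma~\ref{lem:SumOverZeros} is taken with parameter $\Delta=\pi^{-1}\log\log\tau$, not bandwidth $\alpha$; the factor $(e^{\alpha}+1)/(2\alpha)$ multiplying $(\log\tau)^{2-2\sigma}$ in the zero-sum bound comes from the weights $(xy)^{\frac12-\sigma}\left(1+y^{\sigma-\frac12}\right)/\log y$ in \eqref{eq:SumOverZeros}, i.e.\ from the choice $y=\exp\left(\alpha/(\sigma-\tfrac12)\right)$ in the Selberg formula, and the hypothesis $\alpha\geq\log 2$ is there to guarantee $y\geq 2$, not to make the majorant usable.

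Second, $\nu_1$ and $\nu_2$ do not parametrise a splitting of the horizontal segment of integration. In \eqref{eq:mainForLog} only $\mathcal{Z}$, $\mathcal{R}_1$, $\mathcal{R}_2$ are integrated over $\sigma'$ (from $\sigma$ to $3/2$, not from $2$); the prime-sum contribution to $\log\cL$ is bounded directly by $\widehat{S}_{\cL,x,y}(\sigma)+\mathcal{E}_x$, in which the weight $1/\log n$ is already built in. The parameters $\nu_1,\nu_2$ enter in \eqref{eq:Case2}, where the integral $\int_{0}^{(1-\sigma)\log a}\theta_1(u)\,\dif{u}$ coming from $\int_2^a u^{-\sigma}(\log u)^{-1}\dif{u}$ (see \eqref{eq:integral2}) is cut at $u=\nu_1$ and $u=(1-\sigma)(\log a)/\nu_2$; this is precisely the origin of the terms $\frac{m}{\nu_1^2}\exp\left(-\frac{2\alpha(1-\sigma)}{(2\sigma-1)\nu_2}\right)(\log\tau)^{\frac{2}{\nu_2}(1-\sigma)}$, the $(\nu_2\eta)^2$ term and $\int_0^{\nu_1}\theta_1(u)\dif{u}$ in \eqref{eq:1stCaseB}. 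Splitting the contour of integration instead, as you propose, would produce a bound of a genuinely different shape and would not recover these terms, so this part of the plan needs to be replaced by the direct estimation of $\widehat{S}_{\cL,x,y}(\sigma)$ as in \eqref{eq:SHat1}.
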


The following theorem provides a simplified estimate in the case $\sigma=1$ when $|t|$ is large enough.

\begin{theorem}
\label{thm:1LineExplicit}
Let $\cL\in\cSP$ and assume the Generalized Riemann Hypothesis for $\cL$, the Riemann Hypothesis for $\zeta(s)$ and the strong $\lambda$-conjecture. If $\alpha\geq\log{2}$ and
\[
\tau \geq \tau_0 \geq \exp{\left(e^{\alpha}\sqrt{60}\right)}, \quad
|t| \ge t_0\ge \max\left\{2\max_{1\leq j\leq f}\left\{\left|\mu_j\right|\right\},1\right\},
\]
then
\begin{flalign}
\label{eq:LogDer1LineExp}
\left|\frac{\cL'}{\cL}(1+\ie t)\right| &\leq 2m\log{\log{\tau}} - m\left(\gamma+\alpha\right) + \frac{e^{\alpha}+1}{2\alpha} + \left(\frac{m\left(e^{\alpha}-1\right)(2\alpha+1)}{2\pi\alpha}\right)\frac{\left(\log{\log{\tau}}\right)^2}{\log{\tau}} \nonumber \\
&+ \left(\frac{2m\left(e^{\alpha}+1\right)}{\alpha}\right)\frac{\log{\log{\tau}}}{\log{\tau}}
+ \left(0.24me^{\alpha}+\frac{1}{\alpha}\left(e^{\alpha}+1\right)
\left(\mathfrak{a}_2+\mathfrak{b}_2\right)\right)\frac{1}{\log{\tau}} \nonumber \\
&+\frac{2.15\sdeg\left(e^{2\alpha}+1\right)}{\alpha\log^{2}{\tau}} + \frac{m_{\cL}}{\alpha}\left(\frac{\sq}{\tau}\right)^{\frac{2}{\sdeg}}
\end{flalign}
and
\begin{flalign}
\label{eq:Log1LineExp}
\left|\log{\cL(1+\ie t)}\right| &\leq m\log{\left(2\log{\log{\tau}}\right)} + m\gamma + \frac{e^{\alpha}+1}{4\alpha\log{\log{\tau}}} 
+ \left(\frac{m\left(1+e^{\alpha}\left(4\alpha-1\right)\right)}{4\pi\alpha}\right)\frac{\log{\log{\tau}}}{\log{\tau}} \nonumber \\
&+ \frac{m}{\alpha}\left(e^{\alpha}+1+\frac{e^{\alpha}\left(7\alpha-4\left(\alpha^2+1\right)\right)+4}{4\pi}\right)\frac{1}{\log{\tau}} \nonumber \\
&+ \left(\frac{\left(e^{\alpha}+1\right)\left(\mathfrak{a}_2+2\max\{0,\mathfrak{b}_2\}\right)}{4\alpha}+\frac{me^{\alpha}\log{\log{\tau_0}}}{\log{\log{\tau_0}}-\alpha}\right)\frac{1}{\log{\tau}\log{\log{\tau}}} \nonumber \\
&+ \frac{5me^{2\alpha}\log{\log{\tau}}}{8\pi\log^{2}{\tau}}
+ \frac{5me^{2\alpha}\left(1-2\alpha\right)}{16\pi\log^{2}{\tau}}
+ \frac{2.15\sdeg\left(e^{2\alpha}+1\right)}{\alpha\left(\log{\tau}\right)^{2}\log{\log{\tau}}} + \frac{m_{\cL}\left(1+e^{\frac{\alpha}{2}}\right)}{2\alpha\log{\log{\tau}}}\left(\frac{\sq}{\tau}\right)^{\frac{2}{\sdeg}},
\end{flalign}
where $\mathfrak{a}_2\de \mathfrak{a}\left(m_{\cL},\frac{1}{2}\log{\log{\tau_0}},t_0\right)$ and $\mathfrak{b}_2\de\mathfrak{b}\left(\sdeg,m,m_{\cL},\frac{1}{2}\log{\log{\tau_0}},t_0,\tau_0\right)$ with $\mathfrak{a}$ and $\mathfrak{b}$ defined by~\eqref{eq:Funa} and~\eqref{eq:funb}, respectively, and $\gamma$ is the Euler--Mascheroni constant.
\end{theorem}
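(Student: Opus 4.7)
The plan is to derive these explicit $1$-line bounds by specialising the argument behind Theorem~\ref{thm:MainExplicit}(2) to $\sigma=1$ and sharpening those pieces that vanish or simplify on the $1$-line. Starting from Selberg's moment formula of Section~\ref{sec:SMF} at $s=1+\ie t$, one obtains a decomposition of $\cL'/\cL(1+\ie t)$ (and, by analogous weighting, of $\log\cL(1+\ie t)$) into four explicit contributions: an archimedean gamma-factor term, a weighted sum over prime powers, a weighted sum over non-trivial zeros of $\cL$, and a pole contribution when $m_{\cL}\geq 1$. Under the strong $\lambda$-conjecture every $\lambda_j$ equals $1/2$, so the gamma factor has the uniform shape $\prod_j\Gamma(s/2+\mu_j)$; Stirling's expansion then becomes entirely explicit, with error terms governed by $\sdeg$. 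The threshold $\tau_{0}\geq\exp(e^{\alpha}\sqrt{60})$ is chosen to make every auxiliary condition from the proof of Theorem~\ref{thm:MainExplicit}(2) hold at $\sigma=1$ with $\alpha_{2},\alpha_{3}$ taken near $0$, which lets one read off the leading constants cleanly after simplification.

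For the prime sum, inequality~\eqref{eq:BoundOnLambda} reduces matters to $m$ times the analogous sum for $\zeta$; the explicit Mertens-type estimates from Section~\ref{sec:SumsPrimes}, valid under RH for $\zeta(s)$, supply the closed-form main terms $2m\log\log\tau-m(\gamma+\alpha)$ and $m\log(2\log\log\tau)+m\gamma$ appearing in \eqref{eq:LogDer1LineExp} and \eqref{eq:Log1LineExp}, together with the $\log\log\tau/\log\tau$ and $1/\log\tau$ corrections. For the zero sum I would use the Guinand--Weil formula of Section~\ref{sec:SumZeros} together with a Beurling--Selberg bandlimited majorant of exponential type proportional to $\alpha$. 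On GRH the zeros lie on $\Re(s)=1/2$, and this majorant contributes $(e^{\alpha}+1)/(2\alpha)$ for $\cL'/\cL$ and $(e^{\alpha}+1)/(4\alpha\log\log\tau)$ for $\log\cL$. The auxiliary functions $\mathfrak{a}_{2}$ and $\mathfrak{b}_{2}$, obtained by substituting $\alpha_{1}=\tfrac{1}{2}\log\log\tau_{0}$ in $\mathfrak{a}$ and $\mathfrak{b}$, absorb the zero-counting and archimedean constants, while the pole contribution yields the $m_{\cL}(\sq/\tau)^{2/\sdeg}$ term.

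The main obstacle will be the accurate bookkeeping of secondary constants. Preserving the clean leading forms $2m\log\log\tau-m(\gamma+\alpha)+(e^{\alpha}+1)/(2\alpha)$ and $m\log(2\log\log\tau)+m\gamma$ requires that each correction be matched exactly: the coefficients $m(e^{\alpha}-1)(2\alpha+1)/(2\pi\alpha)$, $2m(e^{\alpha}+1)/\alpha$, $5me^{2\alpha}/(16\pi)$, $2.15\sdeg(e^{2\alpha}+1)/\alpha$, and the contributions of $\mathfrak{a}_{2}$ and $\mathfrak{b}_{2}$ must all emerge from definite integral or Stirling computations without loss. Extra care is needed in passing from $\cL'/\cL$ to $\log\cL$, since the latter arises by integration and picks up an additional $1/\log\log\tau$ factor in several terms; this explains the asymmetry in the powers of $\log\log\tau$ between \eqref{eq:LogDer1LineExp} and \eqref{eq:Log1LineExp}.
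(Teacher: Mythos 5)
Your proposal follows essentially the same route as the paper: the Selberg moment formula decomposition at $s=1+\ie t$ with $y=e^{2\alpha}$, $x=e^{-2\alpha}\log^2\tau$, the prime sums handled via $\left|\Lambda_{\cL}(n)\right|\leq m\Lambda(n)$ and explicit RH prime counting (giving the main terms $2m\log\log\tau-m(\gamma+\alpha)$ and $m\log(2\log\log\tau)+m\gamma$), the zero sum via the Guinand--Weil formula with the bandlimited majorant of Lemma~\ref{lem:SumOverZeros} applied with $\alpha_1=\tfrac12\log\log\tau_0$, trivial bounds for the gamma-factor and pole terms, and integration over $[\sigma,3/2]$ for $\log\cL$. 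The only cosmetic slips are that the majorant's exponential type is $2\log\log\tau$ (not proportional to $\alpha$; the factor $(e^{\alpha}+1)/(2\alpha)$ arises from the Selberg-formula weight times the zero density $\tfrac12\log\tau$), and the tail term $\mathcal{E}_x$ should be mentioned explicitly, but neither affects the correctness of the plan.
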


\begin{remark}
\label{rmk:sigma1Explicit}
Let $\cL\in\cSP$ be entire and assume the Generalized Riemann Hypothesis for $\cL$, the Riemann Hypothesis for $\zeta(s)$ and the strong $\lambda$-conjecture. Assume also that $\alpha\geq\log{2}$ and $\sq \geq {\sq}_0 \geq \exp{\left(e^{\alpha}\sqrt{60}\right)}$. Then~\eqref{eq:LogDer1LineExp} and~\eqref{eq:Log1LineExp} are valid for $t=0$ after taking $m_{\cL}=0$, and replacing $\tau$ and $\tau_0$ with $\sq$ and ${\sq}_0$, respectively, and also after replacing $\mathfrak{a}_2$ and $\mathfrak{b}_2$ with $\mathfrak{a}_3:=\mathfrak{a}\left(0,\frac{1}{2}\log\log{{\sq}_0},1\right)$ and $\mathfrak{b}_3:=\mathfrak{b}_3\left(\sdeg,m,{\sq}_0\right)$, that are defined by~\eqref{eq:Funa} and~\eqref{def:b3}, respectively. 
\end{remark}

\subsection{Some simplified results}
\label{subsec:simplfied}

In this section we are stating two simplified explicit estimates that follow from the first part of Theorem~\ref{thm:MainExplicit} and from Theorem~\ref{thm:1LineExplicit}. The first one is for the Riemann zeta-function and $\sigma\in[1/2+1/\log{\log{t}},1)$, while the second one is for a large family of $L$-functions with $\sigma\in[0.6,1)$.

\begin{corollary}
\label{cor:ZetaExplicit}
Let $t\geq 10^6$, $1/2+1/\log{\log{t}}\leq\sigma<1$, $\alpha=1.278$, and assume the Riemann Hypothesis. Then
\begin{equation}
\label{eq:LogDerZetaExplicit}
\left|\frac{\zeta'}{\zeta}(s)\right| \leq \widehat{A}\left(1,\alpha,\sigma\right)\left(\log{t}\right)^{2-2\sigma} - \frac{\sigma 2^{1-\sigma}}{1-\sigma} + 4.2\left(\log{t}\right)^{3-4\sigma} + \frac{0.64}{(2\sigma-1)^{3}} + \frac{4\left(\log{\log{t}}\right)^{2}}{(2\sigma-1)\left(\log{t}\right)^{2\sigma-1}} + \frac{2}{\left(\log{t}\right)^{\frac{1}{3}}}
\end{equation}
and
\begin{flalign}
\label{eq:LogZetaExplicit}
\left|\log{\zeta(s)}\right| &\leq \widetilde{A}\left(1,\alpha,\sigma,t\right)\frac{\left(\log{t}\right)^{2-2\sigma}}{\log{\log{t}}} - \frac{\eta(\alpha,\sigma,t)}{(1-\sigma)\log{\log{t}}} + \log{\left(2\log{\log{t}}\right)} + \frac{3.24\left(\log{t}\right)^{2-2\sigma}}{(1-\sigma)^{2}\left(\log{\log{t}}\right)^{2}} \nonumber \\
&+ \frac{1.04\left(\log{t}\right)^{3-4\sigma}}{\log{\log{t}}} + \frac{4.7}{(2\sigma-1)^{2}} + \frac{1.53\log{\log{t}}}{\left(\log{t}\right)^{2\sigma-1}} + \frac{7.75}{\left(\log{t}\right)^{\frac{1}{3}}\log{\log{t}}} + \frac{0.21\log{\log{t}}}{\left(\log{t}\right)^{\frac{2}{3}}},
\end{flalign}
where functions $\widehat{A}(1,\alpha,\sigma)$, $\widetilde{A}(1,\alpha,\sigma,t)$ and $\eta(\alpha,\sigma,t)$ are defined by~\eqref{eq:WidehatA},~\eqref{eq:TildeA} and~\eqref{eq:eta}, respectively.
\end{corollary}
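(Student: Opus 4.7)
The plan is to specialise case~(1) of Theorem~\ref{thm:MainExplicit} to the Riemann zeta-function. For $\cL=\zeta$ one has $m=1$, $m_\cL=1$, $\sdeg=1$, $\sq=1$, $f=1$, $\lambda_1=1/2$ and $\mu_1=0$, so $\tau=|t|$, the strong $\lambda$-conjecture holds trivially, and the requirement $|t|\geq 2\max|\mu_j|$ reduces to $|t|\geq 1$. Choosing $\alpha_1=1$ matches the hypothesised lower bound $\sigma\geq 1/2+1/\log\log t$ with~\eqref{eq:sigmaRange1}; together with $\alpha=1.278$ this gives $2\alpha_1>\alpha\geq\log 2$. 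A short numerical check shows
\[
\max\left\{e^{\sqrt{60}},\,\exp\!\bigl(2^{1/(2-\alpha/\alpha_1)}\bigr),\,\exp(e^{2\alpha_1})\right\}\approx 2310,
\]
so the assumption $t\geq 10^6$ (with $\tau_0=10^6$, $t_0=1$) activates every hypothesis of case~(1).

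With these parameters fixed, I would substitute directly into~\eqref{eq:1stCase} and~\eqref{eq:1stCaseB}. The leading blocks $\widehat A(1,\alpha,\sigma)(\log t)^{2-2\sigma}-\sigma 2^{1-\sigma}/(1-\sigma)$ and $\widetilde A(1,\alpha,\sigma,t)(\log t)^{2-2\sigma}/\log\log t-\eta(\alpha,\sigma,t)/((1-\sigma)\log\log t)+\log(2\log\log t)$ transfer verbatim to~\eqref{eq:LogDerZetaExplicit}--\eqref{eq:LogZetaExplicit}. The remaining work is then purely numerical: compute $\mathfrak a=\mathfrak a(1,1,1)$ and $\mathfrak b=\mathfrak b(1,1,1,1,1,10^6)$ from the forthcoming formulas~\eqref{eq:Funa}--\eqref{eq:funb}, multiply by $(e^\alpha+1)/\alpha\approx 3.59$ to produce the coefficients~$4.2$ and~$1.04$ of $(\log t)^{3-4\sigma}$, bound the purely $\sigma$-rational coefficient $\sigma 2^{3/2-\sigma}(4+(2+(2\sigma-1)\log 2)^2)/(8\pi)$ on $[1/2,1]$ (maximum near $0.634$) to obtain $0.64/(2\sigma-1)^3$, and analogously extract $4.7/(2\sigma-1)^2$ for the $\log$ version. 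The combinations $(e^\alpha-1)/(2\pi\alpha)$ and $2(e^\alpha+1)/\alpha$ yield the coefficients $4$ and $1.53$ of $(\log\log t)^2/((2\sigma-1)(\log t)^{2\sigma-1})$ and $\log\log t/(\log t)^{2\sigma-1}$.

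A subsidiary step absorbs all remaining lower-order contributions into the clean tails $2/(\log t)^{1/3}$ (resp.\ $7.75/((\log t)^{1/3}\log\log t)$ and $0.21\log\log t/(\log t)^{2/3}$). Concretely, the $(\sq/\tau)^{2/\sdeg}$-term becomes $t^{-2}(\log t)^{2-2\sigma}$, which is negligible for $t\geq 10^6$; the $\sdeg$-term scales as $(\log t)^{-(2-\alpha/\alpha_1)\sigma}=(\log t)^{-0.722\sigma}$, worst at $\sigma\to 1/2^+$ where it equals $(\log t)^{-0.361}$, still below $(\log t)^{-1/3}$; and the $\mathfrak b (e^\alpha+1)/(\alpha(\log t)^{2\sigma-1})$ contribution is controlled using $t\geq 10^6$.

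The main obstacle will be~\eqref{eq:LogZetaExplicit}, which contains the free parameters $\nu_1>0$ and $\nu_2>1$ from~\eqref{eq:1stCaseB}. I expect to pick $\nu_2$ slightly above $1$ so that $(\log t)^{2(1-\sigma)/\nu_2}$ stays close to $(\log t)^{2-2\sigma}$, and $\nu_1$ small so that $\int_0^{\nu_1}\theta_1(u)\,du\leq\nu_1/2+O(\nu_1^2)$ (using $\theta_1(u)\to 1/2$ as $u\to 0^+$), while keeping the prefactor $\nu_1^{-2}\exp(-2\alpha(1-\sigma)/((2\sigma-1)\nu_2))$ bounded near $\sigma=1/2$. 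A direct bound $(2\sigma-1)\log\log t\geq 2$ forces $\eta(\alpha,\sigma,t)\leq(2(1-\alpha/2))^{-1}\approx 1.385$, which together with the optimisation of $\nu_1,\nu_2$ produces the coefficient $3.24$ on $(\log t)^{2-2\sigma}/((1-\sigma)^2(\log\log t)^2)$. The corollary then follows from collecting terms.
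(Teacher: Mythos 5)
Your overall strategy is exactly the paper's: specialise case~(1) of Theorem~\ref{thm:MainExplicit} with $\sdeg=\sq=m=m_{\cL}=\alpha_1=1$, $\alpha=1.278$, $\tau=|t|$, verify the admissibility thresholds, and then reduce everything to numerics plus an optimisation of $\nu_1,\nu_2$. However, there is a concrete parameter error that defeats the claimed constants: you take $t_0=1$, whereas the corollary requires $t_0=10^6$ (which is legitimate since $t\geq10^6$). The functions $\mathfrak{a}$ and $\mathfrak{b}$ are decreasing in $t_0$, and with your choice
\[
\mathfrak{a}(1,1,1)=\frac{1}{1-e^{-2}}\left(1+\frac{4}{\left(1-\tfrac34\right)\left(1-e^{-2}\right)}\right)\approx 22.6,
\]
so the coefficient of $(\log t)^{3-4\sigma}$ becomes $\mathfrak{a}\,(e^{\alpha}+1)/\alpha\approx 81$, not $4.2$; with $t_0=10^6$ one instead gets $\mathfrak{a}\approx 1.157$ and $1.157\cdot 3.591\approx 4.15\leq 4.2$. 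Likewise $\mathfrak{b}$ with $t_0=1$ contains the positive term $2(1+e^{-4})/\bigl((t_0^2-\tfrac34)(1-e^{-2})^2\bigr)\approx 10.9$, so $\mathfrak{b}>0$ and you cannot absorb the $\mathfrak{b}$-term into the negative $(\sq/\tau)^{2/\sdeg}$ contribution as the proof of \eqref{eq:LogDerZetaExplicit} requires (the paper observes that with $t_0=\tau_0=10^6$ one has $\mathfrak{b}<0$, so the sum of the last and third-last terms of \eqref{eq:1stCase} is negative and can be discarded).

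A secondary issue: your heuristic for the free parameters in \eqref{eq:1stCaseB} points the wrong way for $\nu_1$. Taking $\nu_1$ small makes the term $\nu_1^{-2}\exp\bigl(-2\alpha(1-\sigma)/((2\sigma-1)\nu_2)\bigr)(\log t)^{2(1-\sigma)/\nu_2}$ blow up; the actual optimisation (balancing $(1.4\nu_2)^2e^2+\nu_1^{-2}(1-1/\nu_2)^{-2}+\int_0^{\nu_1}\theta_1$) lands at $\nu_1=3.378$, $\nu_2=1.182$, which is what produces the coefficient $3.24$. Your bound $\eta\leq 1.385$ and the observation $(2-\alpha/\alpha_1)\sigma>1/3$ are both correct and match the paper. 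With $t_0$ corrected to $10^6$ and the $\nu_1,\nu_2$ optimisation done as above, the argument goes through as in the paper.
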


\begin{corollary}
\label{corol:FixedRegions}

Let $\cL\in\cSP$ and assume the Generalized Riemann Hypothesis for $\cL$, the Riemann Hypothesis for $\zeta(s)$ and the strong $\lambda$-conjecture. In addition, assume also that $m=\sdeg$. Then, for $0.6\leq\sigma<1$ and
\[
\tau\geq \exp{\left(\exp{(13)}\right)}, \quad |t|\geq \max\left\{2\max_{1\leq j\leq f}\left\{\left|\mu_j\right|\right\},m_{\cL}+2\cdot10^3\right\},
\]
we have
\begin{equation}
\label{eq:VerySimplifiedLogDer4}
\left|\frac{\cL'}{\cL}(s)\right| \leq \sdeg\left(\frac{\left(\log{\tau}\right)^{2-2\sigma}-\sigma2^{1-\sigma}}{1-\sigma}\right) + \left(1.796-1.278\sdeg\right)\left(\log{\tau}\right)^{2-2\sigma} + 4\left(\log{\tau}\right)^{3-4\sigma} + 90\sdeg
\end{equation}
and
\begin{multline}
\label{eq:Very2ndCaseSimpler}
\left|\log{\cL(s)}\right| \leq \sdeg\frac{\left(\log{\tau}\right)^{2-2\sigma}-1}{2(1-\sigma)\log{\log{\tau}}} + \left(0.898-0.639\sdeg\right)\frac{\left(\log{\tau}\right)^{2-2\sigma}}{\log{\log{\tau}}} \\ 
+ \sdeg\left(\log{\log{\log{\tau}}}+8\right) + 3.4\sdeg\frac{\left(\log{\tau}\right)^{2-2\sigma}}{(1-\sigma)^{2}\left(\log{\log{\tau}}\right)^{2}}.
\end{multline}
Moreover,
\begin{equation}
\label{eq:VerySimplifiedLogDer42}
\left|\frac{\cL'}{\cL}\left(1+\ie t\right)\right| \leq 2\sdeg\log{\log{\tau}} + 2.265 - 2.763\sdeg + 4\sdeg\frac{\left(\log{\log{\tau}}\right)^{2}}{\log{\tau}}
\end{equation}
and
\begin{equation}
\label{eq:VerySimplifiedLog42}
\left|\log{\cL\left(1+\ie t\right)}\right| \leq \sdeg\log{\log{\log{\tau}}} + \sdeg\log{\left(2e^{\gamma}\right)} + \frac{\sdeg}{\log{\log{\tau}}}.
\end{equation}
\end{corollary}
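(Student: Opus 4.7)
The plan is to derive \eqref{eq:VerySimplifiedLogDer4}--\eqref{eq:Very2ndCaseSimpler} from case~(1) of Theorem~\ref{thm:MainExplicit} and \eqref{eq:VerySimplifiedLogDer42}--\eqref{eq:VerySimplifiedLog42} from Theorem~\ref{thm:1LineExplicit}, by choosing specific numerical values for the free parameters and collapsing every residual error term into a constant multiple of $\sdeg$. The hypothesis $\tau\geq \exp(\exp(13))$ forces $\log\log\tau\geq 13$, while $|t|\geq m_\cL+2\cdot 10^3$ together with $\tau=\sq|t|^{\sdeg}$ yields $(\sq/\tau)^{2/\sdeg}\leq |t|^{-2}\leq 1/(4\cdot 10^6)$, so every such occurrence is negligible. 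Throughout I would use the general inequality $m_\cL\leq \sdeg$ for $\cL\in\cS$.

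For the strip estimates I would take $\alpha=1.278$ and $\alpha_1=1$; since $\log\log\tau\geq 13$, the window condition $\sigma\geq 1/2+\alpha_1/\log\log\tau$ from~\eqref{eq:sigmaRange1} is implied by $\sigma\geq 0.6$. The key algebraic step is to expand $\widehat{A}(m,\alpha,\sigma)=A(m,\alpha,\sigma,\sigma)+(e^{\alpha}+1)/(2\alpha)$ and use the bound $A(m,\alpha,\sigma,\sigma)\leq m/(1-\sigma)-m\alpha$, valid on $[0.6,1)$ by inspection of the Taylor series of $1-e^{-y}$ in the definition~\eqref{eq:A}. Combining this with $(e^{1.278}+1)/(2\cdot 1.278)\approx 1.796$ and the subtracted $-m\sigma 2^{1-\sigma}/(1-\sigma)$ term of~\eqref{eq:1stCase} produces the first two summands of~\eqref{eq:VerySimplifiedLogDer4}. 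The term $4(\log\tau)^{3-4\sigma}$ originates from $\mathfrak{a}(e^{\alpha}+1)/\alpha\cdot(\log\tau)^{3-4\sigma}$ after bounding $\mathfrak{a}$ via~\eqref{eq:Funa} using $t_0\geq 2\cdot 10^3$; all remaining error blocks in~\eqref{eq:1stCase} are each bounded by an absolute constant times $\sdeg$ on using $(2\sigma-1)^{-1}\leq 5$, $(\log\tau)^{-(2\sigma-1)}\leq (\log\tau)^{-1/5}$ and the $|t|^{-2}$ bound above, and their total is at most $90\sdeg$. An identical procedure applied to~\eqref{eq:1stCaseB} yields~\eqref{eq:Very2ndCaseSimpler}: the $\sdeg\log\log\log\tau$ summand comes from rewriting $m\log(2\log\log\tau)$, and the $3.4\sdeg(\log\tau)^{2-2\sigma}/((1-\sigma)^2(\log\log\tau)^2)$ from the corresponding explicit error block.

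For the $1$-line bounds I would invoke Theorem~\ref{thm:1LineExplicit} with the same $\alpha=1.278$ and $\tau_0=\exp(\exp(13))$. The leading block $2\sdeg\log\log\tau-\sdeg(\gamma+1.278)+(e^{1.278}+1)/(2\cdot 1.278)$ numerically equals $2\sdeg\log\log\tau+1.796-1.855\sdeg$, which together with the additive $O(1/\log\tau)$ and $O((\sq/\tau)^{2/\sdeg})$ residuals from~\eqref{eq:LogDer1LineExp} fits inside $2\sdeg\log\log\tau+2.265-2.763\sdeg$, while the surviving $O((\log\log\tau)^2/\log\tau)$ contribution collapses to exactly $4\sdeg(\log\log\tau)^2/\log\tau$ via the bound $m(e^\alpha-1)(2\alpha+1)/(2\pi\alpha)\leq 4\sdeg$. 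A parallel treatment of~\eqref{eq:Log1LineExp} gives~\eqref{eq:VerySimplifiedLog42}, where $m\log(2\log\log\tau)+m\gamma=\sdeg\log\log\log\tau+\sdeg\log(2e^{\gamma})$ and the leftover $(e^\alpha+1)/(4\alpha\log\log\tau)$ plus the remaining error terms are absorbed into the final $\sdeg/\log\log\tau$.

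The main obstacle is the meticulous numerical bookkeeping needed to verify that every one of the many error terms in Theorems~\ref{thm:MainExplicit} and~\ref{thm:1LineExplicit} collapses to precisely the tidy constants $90\sdeg$, $8\sdeg$, $3.4\sdeg$, $2.265-2.763\sdeg$ and $\sdeg$ displayed in the corollary. In particular one must carefully control the auxiliary quantities $\mathfrak{a},\mathfrak{b},\mathfrak{a}_2,\mathfrak{b}_2$ from~\eqref{eq:Funa} and~\eqref{eq:funb}, whose definitions involve exponentials in $\alpha/(2\sigma-1)$ of order $e^{6.4}$ at the worst point $\sigma=0.6$, and verify that the threshold $\log\log\tau\geq 13$ together with $|t|\geq m_\cL+2\cdot 10^3$ is large enough to swallow them.
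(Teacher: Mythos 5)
Your overall strategy is the same as the paper's: specialize case~(1) of Theorem~\ref{thm:MainExplicit} and Theorem~\ref{thm:1LineExplicit} with numerical parameters, bound $A(\sdeg,\alpha,\sigma,\sigma)-\sdeg/(1-\sigma)\leq-\alpha\sdeg$, and absorb the residual blocks into constants times $\sdeg$. However, two of your concrete parameter choices do not deliver the stated constants, and one of them is fatal.

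First, for~\eqref{eq:VerySimplifiedLogDer42} you keep $\alpha=1.278$, which gives the constant block $-\sdeg(\gamma+1.278)+\frac{e^{1.278}+1}{2\cdot1.278}\approx 1.796-1.855\sdeg$. You claim this ``fits inside'' $2.265-2.763\sdeg$, but the inequality $1.796-1.855\sdeg\leq 2.265-2.763\sdeg$ is equivalent to $\sdeg\leq 0.52$, which fails for every $\sdeg\geq1$ (e.g.\ for $\sdeg=1$ you get $-0.059$ versus the target $-0.498$). The paper instead takes $\alpha=2.186$ here, chosen to minimize $-\alpha+\frac{e^{\alpha}+1}{2\alpha}$; this yields exactly $2.264\ldots-2.763\sdeg$, and only then do the remaining $O(1/\log\tau)$ terms fit. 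With your $\alpha$ the claimed bound is simply false to derive. Second, for~\eqref{eq:VerySimplifiedLogDer4} you take $\alpha_1=1$, so $\mathfrak{a}\geq(1-e^{-2})^{-1}\approx1.157$ and the coefficient of $(\log\tau)^{3-4\sigma}$ becomes $\mathfrak{a}(e^{\alpha}+1)/\alpha\approx 4.16>4$. The paper's choice $\alpha_1=1.3$ gives $\mathfrak{a}\leq1.1$ and hence $\leq3.95<4$; note that $\alpha_1=1.3$ is also exactly what makes the window $\sigma\geq\frac12+\alpha_1/\log\log\tau$ coincide with $\sigma\geq0.6$ at the threshold $\log\log\tau=13$.

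Two smaller points. For~\eqref{eq:Very2ndCaseSimpler} you wave at ``an identical procedure,'' but the coefficient $3.4$ requires choosing the free parameters $\nu_1,\nu_2$ in~\eqref{eq:1stCaseB} (the paper optimizes to $\nu_1=3.049$, $\nu_2=1.244$) and combining the $\left(\nu_2\eta\right)^2$-term, the $\nu_1^{-2}$-term, the $\int_0^{\nu_1}\theta_1$ term and the leftover from the $\eta$-expansion; this is not automatic. Finally, you invoke ``$m_{\cL}\leq\sdeg$ for $\cL\in\cS$'' as a general fact; this is not an axiom of the class and is not needed --- the hypothesis $|t|\geq m_{\cL}+2\cdot10^3$ already controls $m_{\cL}(\sq/\tau)^{2/\sdeg}=m_{\cL}/|t|^2$ directly, which is what the paper uses.
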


\section{The Selberg moment formula for functions in $\mathcal{S}$}
\label{sec:SMF}

Selberg~\cite[Lemma 2]{SelbergOnTheNormal} discovered an interesting connection (also known as the Selberg moment formula) between the logarithmic derivative of the Riemann zeta-function and special truncated Dirichlet series. In this paper we follow the same idea. 
Hence, the following lemma generalizes Selberg's result to functions in the Selberg class and follows from using similar contour integral as in \cite[the beginning of Section 13.2]{MontgomeryVaughan}.

\begin{lemma}
\label{lem:SelbergMomentFormula}
Assume that $\cL\in\cS$, $\cL(1)\neq 0$ and $s\neq 1$ or $m_{\cL}=0$. Let $q_j(k)\de\left(k+\mu_j\right)/\lambda_j$ for $j\in\{1,\ldots,f\}$ and $k\in\N_0$. Take $x\geq 2$ and $y\geq 2$. Then
\begin{flalign}
\label{eq:selbergMomentFormula}
\frac{\cL'}{\cL}(s) = -\sum_{n\leq xy}\frac{\Lambda_{\cL,x,y}(n)}{n^s} &+ m_{\cL}\frac{(xy)^{1-s}-x^{1-s}}{(1-s)^2\log{y}} \nonumber \\
&+ \frac{1}{\log{y}}\sum_{j=1}^{f}\sum_{k=0}^{\infty}\frac{x^{-q_j(k)-s}-(xy)^{-q_j(k)-s}}{\left(q_j(k)+s\right)^2} + \frac{1}{\log{y}}\sum_{\rho}\frac{x^{\rho-s}-(xy)^{\rho-s}}{(\rho-s)^2}
\end{flalign}
for $s\notin\left\{-q_j(k)\colon k\in\N_0, 1\leq j\leq f\right\}\cup\{1\}$ and $s\neq\rho$, where
\[
	\Lambda_{\cL,x,y}(n) \de \left\{\begin{array}{ll}
		\Lambda_{\cL}(n), & 1\leq n\leq x, \\
		\Lambda_{\cL}(n)\frac{\log{\left(xy/n\right)}}{\log{y}}, & x<n\leq xy.
	\end{array}
	\right.
\]
\end{lemma}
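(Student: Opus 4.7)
The plan is to adapt the classical derivation of the Selberg moment formula for $\zeta(s)$, as in~\cite[\S 13.2]{MontgomeryVaughan}, to an arbitrary element of the Selberg class. The starting point is the contour integral
\[
I(s) \de \frac{1}{2\pi\ie \log y}\int_{c-\ie\infty}^{c+\ie\infty} \frac{(xy)^w - x^w}{w^2}\,\frac{\cL'}{\cL}(s+w)\,\dif{w},
\]
where $c>0$ is chosen large enough that $\Re(s+w)>1$ along the vertical line, ensuring absolute convergence of the Dirichlet series $(\cL'/\cL)(s+w)=-\sum_{n\geq 1}\Lambda_{\cL}(n)n^{-(s+w)}$. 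Substituting this series, interchanging sum and integral, and applying the standard truncation identity
\[
\frac{1}{2\pi\ie}\int_{c-\ie\infty}^{c+\ie\infty}\frac{u^w}{w^2}\,\dif{w} = \max\{\log u,0\}, \quad c>0,\ u>0,
\]
to $u=xy/n$ and $u=x/n$ produces, after dividing by $\log y$ and recalling the definition of $\Lambda_{\cL,x,y}$, the identity $I(s)=-\sum_{n\leq xy}\Lambda_{\cL,x,y}(n)n^{-s}$.

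Next, I would shift the line of integration leftward to $\Re w=-T$ for $T\to\infty$, and apply the residue theorem. The integrand has: a simple pole at $w=0$ (the zero of $(xy)^w-x^w$ cancels one order of the double pole of $w^{-2}$) with residue $\log y\cdot(\cL'/\cL)(s)$; a simple pole at $w=1-s$ coming from the pole of $\cL'/\cL$ at $s+w=1$, with residue $-m_{\cL}\bigl((xy)^{1-s}-x^{1-s}\bigr)/(1-s)^2$; simple poles at $w=\rho-s$ arising from the non-trivial zeros $\rho$, with residues $\bigl((xy)^{\rho-s}-x^{\rho-s}\bigr)/(\rho-s)^2$; and simple poles at $w=-q_j(k)-s$ coming from the trivial zeros of $\cL$ furnished by the gamma factors in axiom~(3), with residues $\bigl((xy)^{-q_j(k)-s}-x^{-q_j(k)-s}\bigr)/(q_j(k)+s)^2$. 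Collecting these residues, dividing by $\log y$, and rearranging signs yields~\eqref{eq:selbergMomentFormula}. The hypothesis ``$s\neq 1$ or $m_{\cL}=0$'' ensures that the poles at $w=0$ and $w=1-s$ remain distinct, while the other excluded values of $s$ prevent coincidence of $w=0$ with any zero pole.

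The main technical obstacle is showing that the integral over $\Re w=-T$ vanishes as $T\to\infty$, together with the absolute convergence of the two residue series. On this line the weight is bounded by $2x^{-T}/|w|^2$, while the functional equation combined with Stirling's formula and the Hadamard product for $\cL$ (which exists thanks to the finite-order condition in axiom~(2)) delivers at most polynomial growth of $|(\cL'/\cL)(s+w)|$ in $|\Im w|$ on vertical lines kept a positive distance from the zeros of $\cL$. Closing the contour along horizontal segments at heights chosen to avoid zeros (a standard density argument relying on $N_{\cL}(T)\ll T\log T$) delivers both the vanishing of the tail integral and the convergence of the sum over non-trivial zeros in the symmetric order implicit in~\eqref{eq:selbergMomentFormula}. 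The sum over trivial zeros is absolutely convergent because $q_j(k)=(k+\mu_j)/\lambda_j$ grows linearly in $k$ while the numerators are uniformly bounded.
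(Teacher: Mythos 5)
Your proposal is correct and follows essentially the same route as the paper, which proves this lemma simply by invoking the classical contour-integral argument from the beginning of \cite[Section~13.2]{MontgomeryVaughan}: evaluate $\frac{1}{2\pi\ie\log y}\int_{(c)}\frac{(xy)^w-x^w}{w^2}\frac{\cL'}{\cL}(s+w)\,\dif{w}$ once via the truncation identity and once via residues after shifting the contour left. Your residue bookkeeping (simple pole at $w=0$ giving $\log y\cdot(\cL'/\cL)(s)$, the pole at $w=1-s$, the non-trivial zeros, and the trivial zeros $-q_j(k)$ coming from the gamma factors) and the sign rearrangement reproduce~\eqref{eq:selbergMomentFormula} exactly; the only point worth stating a little more explicitly is that the left edge $\Re w=-T$ must also be chosen to stay a bounded distance from the trivial zeros (the $-q_j(k)-s$ lie in finitely many arithmetic progressions, so this is routine), alongside the horizontal-segment argument you already give.
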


Let $\sigma\in(1/2,3/2]$ and $\alpha\geq\log{2}$, and take
\begin{equation}
\label{eq:xy}
y = \exp{\left(\frac{\alpha}{\sigma-\frac{1}{2}}\right)}, \quad x = y^{-1}\log^{2}{\tau}.
\end{equation}
It is not hard to verify that $x\geq 2$ and $y\geq 2$ under the conditions~(1) and~(2) from Theorem~\ref{thm:MainExplicit}, and also under conditions from Theorem~\ref{thm:1LineExplicit}. Therefore, we can use Lemma~\ref{lem:SelbergMomentFormula} for $x$ and $y$ which are defined by~\eqref{eq:xy} in order to prove these theorems. Moreover, because these conditions are independent on the assumptions of Conjectures~\ref{conj:SelbergVariant} and~\ref{conj:SelbergVariant2}, or the strong $\lambda$-conjecture or a polynomial Euler product, we can use the same approach also to prove Theorems~\ref{thm:MainGeneral}--\ref{thm:1line}.

Let us bound the terms on the right-hand side of~\eqref{eq:selbergMomentFormula}. The first term is estimated easily by
\begin{equation}
\label{eq:S}
\left|\sum_{n\leq xy}\frac{\Lambda_{\cL,x,y}(n)}{n^s}\right|\leq \sum_{n\leq x}\frac{\left|\Lambda_{\cL}(n)\right|}{n^{\sigma}} + \frac{1}{\log{y}}\sum_{x<n\leq xy}\frac{\left|\Lambda_{\cL}(n)\right|\log{\frac{xy}{n}}}{n^{\sigma}} =:S_{\cL,x,y}(\sigma).
\end{equation}
Under GRH we can estimate the sum over zeros as
\begin{flalign}
\label{eq:SumOverZeros}
\frac{1}{\log{y}}\left|\sum_{\rho}\frac{x^{\rho-s}-(xy)^{\rho-s}}{\left(\rho-s\right)^{2}}\right|
&\leq \frac{(xy)^{\frac{1}{2}-\sigma}\left(1+y^{\sigma-\frac{1}{2}}\right)}{\log{y}}\sum_{\rho}\frac{1}{\left|\rho-s\right|^{2}} \nonumber \\
&\leq \frac{e^{\alpha}+1}{\alpha}\left(\log{\tau}\right)^{1-2\sigma}\sum_{\gamma}\frac{\sigma-\frac{1}{2}}{\left(\sigma-\frac{1}{2}\right)^2+(t-\gamma)^2}
\ed \mathcal{Z}(\sigma).
\end{flalign}
We bound the remaining two terms as
\begin{flalign}
\label{eq:sum2}
\frac{1}{\log{y}}\left|\sum_{j=1}^{f}\sum_{k=0}^{\infty}\frac{x^{-q_j(k)-s}-(xy)^{-q_j(k)-s}}{\left(q_j(k)+s\right)^2}\right|
&\leq
\frac{(xy)^{-\sigma}\left(1+y^{\sigma}\right)}{\log{y}}\sum_{j=1}^{f}\sum_{k=0}^{\infty}\left(\frac{k}{\lambda_j}+\frac{1}{2}\right)^{-2} \nonumber \\
&\leq \frac{\left(\sigma-\frac{1}{2}\right)\left(1+\exp{\left(\frac{2\alpha\sigma}{2\sigma-1}\right)}\right)}{\alpha\left(\log{\tau}\right)^{2\sigma}}
\sum_{j=1}^{f}\lambda_j^2\left(\frac{\Gamma'}{\Gamma}\right)'\left(\frac{\lambda_j}{2}\right) \ed \mathcal{R}_1(\sigma)
\end{flalign}
and
\begin{equation}
\label{eq:pole}
m_{\cL}\left|\frac{(xy)^{1-s}-x^{1-s}}{(1-s)^2\log{y}}\right| \leq \frac{m_{\cL}\left(\sigma-\frac{1}{2}\right)\left(1+\exp{\left(\frac{2\alpha(\sigma-1)}{2\sigma-1}\right)}\right)}{\alpha}\left(\frac{\sq}{\tau}\right)^{\frac{2}{\sdeg}}
\left(\log{\tau}\right)^{2-2\sigma} \ed \mathcal{R}_2(\sigma).
\end{equation}
An obvious strategy to produce the desired bounds for $\cL'/\cL$ is to employ estimates~\eqref{eq:S}--\eqref{eq:pole} into the Selberg moment formula~\eqref{eq:selbergMomentFormula}, that is
\begin{equation}
\label{eq:LogDerEffective}
\left|\frac{\cL'}{\cL}(s)\right| \leq S_{\cL,x,y}(\sigma) + \mathcal{Z}(\sigma) + \mathcal{R}_1(\sigma) + \mathcal{R}_2(\sigma).
\end{equation}
However, in order to do so we need to obtain suitable bounds for $S_{\cL,x,y}(\sigma)$ and $\mathcal{Z}(\sigma)$. We are going to do this in the following two sections. At this stage it is not hard to see how to estimate $\log{\cL}$ with the help of $\cL'/\cL$. Under GRH we can write
\[
\log{\cL(s)} = \log{\cL\left(\frac{3}{2}+\ie t\right)} - \int_{\sigma}^{\frac{3}{2}}\frac{\cL'}{\cL}\left(\sigma'+\ie t\right)\dif{\sigma'}.
\]
Because then
\[
\int_{\sigma}^{\frac{3}{2}}\left|\frac{\cL'}{\cL}\left(\sigma'+\ie t\right)\right|\dif{\sigma'} \leq \sum_{n\leq xy}\frac{\left|\Lambda_{\cL,x,y}(n)\right|}{n^{\sigma}\log{n}} - \sum_{n\leq x}\frac{\left|\Lambda_{\cL}(n)\right|}{n^{\frac{3}{2}}\log{n}}
+\int_{\sigma}^{\frac{3}{2}}\left(\mathcal{Z}+\mathcal{R}_1+\mathcal{R}_2\right)\left(\sigma'\right)\dif{\sigma'}
\]
by Lemma~\ref{lem:SelbergMomentFormula}, we obtain
\begin{equation}
\label{eq:mainForLog}
\left|\log{\cL(s)}\right| \leq \widehat{S}_{\cL,x,y}(\sigma)
+ \mathcal{E}_{x} + \int_{\sigma}^{\frac{3}{2}}\left(\mathcal{Z}+\mathcal{R}_1+\mathcal{R}_2\right)\left(\sigma'\right)\dif{\sigma'},
\end{equation}
where
\begin{equation}
\label{eq:SHatEx}
\widehat{S}_{\cL,x,y}(\sigma) \de \sum_{n\leq x}\frac{\left|\Lambda_{\cL}(n)\right|}{n^{\sigma}\log{n}}
+ \frac{1}{\log{y}}\sum_{x<n\leq xy}\frac{\left|\Lambda_{\cL}(n)\right|\log{\frac{xy}{n}}}{n^{\sigma}\log{n}}, \quad
\mathcal{E}_{x} \de \sum_{n>x}\frac{\left|\Lambda_{\cL}(n)\right|}{n^{\frac{3}{2}}\log{n}}.
\end{equation}
We are using~\eqref{eq:mainForLog} for the estimation of $\log{\cL}$.

\begin{remark}
The above methods work when $|t|$ is large enough. Since we want to derive upper bounds also in the case $s= 1$ when $\cL$ is entire, and thus we do slightly different substitutions than~\eqref{eq:xy}. Let us set
\begin{equation}
\label{eq:xySeond}
y = \exp{\left(\frac{\alpha}{\sigma-\frac{1}{2}}\right)}, \quad x = y^{-1}\log^{2}{\sq},
\end{equation}
where $\alpha \geq \log{2}$ if $1/2<\sigma \leq 3/2$. Note that $x\geq 2$ and $y \geq 2$ if $\sq \geq \exp\left(\sqrt{2}e^{\alpha}\right)$. With these substitutions, we have $\log{\sq}$ instead of $\log{\tau}$ in~\eqref{eq:SumOverZeros} and~\eqref{eq:sum2}, and we denote the new terms by $\mathcal{Z}_1(\sigma)$ and $\mathcal{R}_{1,1}(\sigma)$, respectively. 
Thus
\begin{equation}
\label{eq:oneFormulaLogDer}
    \left|\frac{\cL'}{\cL}(1)\right| \leq S_{\cL,x,y}(1) + \mathcal{Z}_1(1) + \mathcal{R}_{1,1}(1).
\end{equation}
We estimate the term $\log{\cL (s)}$ similarly as in the case~\eqref{eq:mainForLog}, but with the same replacements as in \eqref{eq:oneFormulaLogDer} and $\mathcal{R}_{2}(1)=0$ since $\cL$ is entire.
\end{remark}

In the next sections, we are going to estimate the terms described above. Because our treatment of $\widehat{S}_{\cL,x,y}(\sigma)$ and $\mathcal{E}_{x}$ is similar to that of $S_{\cL,x,y}(\sigma)$, we will do this in the next section.

\section{Various sums over prime numbers}
\label{sec:SumsPrimes}
In this section we obtain several upper bounds for $S_{\cL,x,y}(\sigma)$, $\widehat{S}_{\cL,x,y}(\sigma)$ and $\mathcal{E}_{x}$ coming from the Selberg moment formula, see~\eqref{eq:S} and~\eqref{eq:SHatEx} for definitions, according to whether $\cL\in\mathcal{S}$ or $\cL\in\mathcal{SP}$.

\subsection{The case when $\cL\in\cS$}
\label{sec:PrimesForS}

Firstly, we will show that~\eqref{eq:PsiTilde} implies~\eqref{eq:MainForPsi}. By the Euler product representation (Axiom 4),
\[
\sum_{k=2}^{\lfloor\frac{\log{x}}{\log{2}}\rfloor}\sum_{p\leq x^{\frac{1}{k}}}\left|\Lambda_{\cL}\left(p^k\right)\right| \leq \mathcal{C}^{E}_{\cL} \sum_{k=2}^{\lfloor\frac{\log{x}}{\log{2}}\rfloor} k\sum_{p\leq x^{\frac{1}{k}}}p^{k\theta}\log{p}
\]
for some $\theta\in[0,1/2)$ since $\left|b\left(p^k\right)\right|\leq \mathcal{C}^{E}_{\cL} p^{k\theta}$. Trivial estimation assures that
\[
\sum_{p\leq X}p^{k\theta}\log{p} \leq X^{k\theta}\vartheta(X) \ll X^{k\theta+1}
\]
for all $X\geq 2$, where $\vartheta(X)=\sum_{p\leq X}\log{p}$. 
Therefore,
\[
\sum_{k=2}^{\lfloor\frac{\log{x}}{\log{2}}\rfloor}\sum_{p\leq x^{\frac{1}{k}}}\left|\Lambda_{\cL}\left(p^k\right)\right| \ll \mathcal{C}_{\cL}^{E} x^{\theta}\sum_{k=2}^{\lfloor\frac{\log{x}}{\log{2}}\rfloor}kx^{\frac{1}{k}} \ll \mathcal{C}_{\cL}^{E} x^{\frac{1}{2}+\theta}\log^{2}{x},
\]
which gives~\eqref{eq:MainForPsi}. Let $\varepsilon\in(0,1/2)$. 
By the quantitative version of the Ramanujan hypothesis, $\left|a(p)\right|\leq \mathcal{C}_{\cL}^{R}(\varepsilon)p^{\varepsilon}$. Therefore, because 
\[
\sum_{p\leq x}\left|a(p)\right|\log{p} \leq \mathcal{C}_{\cL}^{R}(\varepsilon)
x^{\varepsilon}\vartheta(x),
\]
RH in the equivalent form $\vartheta(x)=x+O\left(\sqrt{x}\log^{2}{x}\right)$ 
and~\eqref{eq:MainForPsi} then guarantee
\begin{equation}
\label{eq:main}
\widetilde{\psi}_{\cL}(x) \leq \mathcal{C}_{\cL}^{R}(\varepsilon)x^{1+\varepsilon} + O\left(\left(\mathcal{C}_{\cL}^{R}(\varepsilon)+\mathcal{C}_{\cL}^{E}\right)x^{\frac{1}{2}+\max\left\{\theta,\varepsilon\right\}}\log^{2}{x}\right)
\end{equation}
for $x\geq 2$, where the implied constant is uniform and $\widetilde{\psi}$ is given in \eqref{eq:PsiTilde}. Inequality~\eqref{eq:main} is used in the proof of Lemmas~\ref{lem:GeneralBoundForS} and~\ref{lem:GeneralBoundForGW}, and consequently in the proof of Theorem~\ref{thm:MainGeneral}.

Results from Section~\ref{sec:Distr} depend on Conjectures~\ref{conj:SelbergVariant} and~\ref{conj:SelbergVariant2}. By the Cauchy--Bunyakovsky--Schwarz inequality,
\[
\sum_{p\leq x}\left|a(p)\right|\log{p} \leq \left(\sum_{p\leq x}\left|a(p)\right|^{2}\right)^{1/2}\left(\sum_{p\leq x}\log^{2}{p}\right)^{1/2} \leq \sqrt{x\log{x}+O(x)}\left(\sum_{p\leq x}\left|a(p)\right|^{2}\right)^{1/2}.
\]
Therefore, Conjecture~\ref{conj:SelbergVariant} implies 
\begin{equation}
\label{eq:main2}
\widetilde{\psi}_{\cL}(x) \leq \sqrt{\mathcal{C}_{\cL}^{P_1}(x)}x + O\left(\sqrt{\mathcal{C}_{\cL}^{P_1}(x)+\mathcal{C}_{\cL}^{P_2}}\frac{x}{\sqrt{\log{x}}}\right) + O\left(\mathcal{C}_{\cL}^{E} x^{\frac{1}{2}+\theta}\log^{2}{x}\right)
\end{equation}
for all $x\geq 2$. 
Conjecture~\ref{conj:SelbergVariant2} implies
\begin{equation}
\label{eq:main3}
\widetilde{\psi}_{\cL}(x) \leq \widehat{\mathcal{C}_{\cL}^{P_1}}(x)x + \widehat{\mathcal{C}_{\cL}^{P_2}}\frac{x}{\log{x}} + O\left(\mathcal{C}_{\cL}^{E} x^{\frac{1}{2}+\theta}\log^{2}{x}\right)
\end{equation}
for all $x\geq 2$ by trivial estimation. The implied constants in both~\eqref{eq:main2} and~\eqref{eq:main3} are absolute. These two inequalities are used in the proof of Lemmas~\ref{lem:GeneralBoundForSV2}--\ref{lem:GeneralBoundForGW}, and consequently in the proof of Theorems~\ref{thm:MainGeneralV2} and~\ref{thm:MainGeneral1line}.

Let $x\geq2$ and $y\geq2$. Assume that we can write $\widetilde{\psi}_{\cL}(u)\leq f(u) + g(u)$ for $u\in[2,xy]$, where $f$ is continuously differentiable on $(1,\infty)$ and $g$ is integrable on $[2,\infty)$. Let $\sigma\geq0$. Then 
\begin{multline}
\label{eq:ineqForS}
S_{\cL,x,y}(\sigma) \leq \int_{2}^{x}\frac{f'(u)}{u^\sigma}\dif{u} + \frac{1}{\log{y}}\int_{x}^{xy}\frac{f'(u)\log{\frac{xy}{u}}}{u^{\sigma}}\dif{u} + \frac{f(2)}{2^{\sigma}} \\ 
+ \sigma\int_{2}^{x}\frac{g(u)}{u^{1+\sigma}}\dif{u} + \frac{1}{\log{y}}\int_{x}^{xy}\frac{\left(1+\sigma\log{\frac{xy}{u}}\right)g(u)}{u^{1+\sigma}}\dif{u}
\end{multline}
and
\begin{multline}
\label{eq:ineqForShat}
\widehat{S}_{\cL,x,y}(\sigma) \leq \int_{2}^{x}\frac{f'(u)}{u^{\sigma}\log{u}}\dif{u} + \frac{1}{\log{y}}\int_{x}^{xy}\frac{f'(u)\log{\frac{xy}{u}}}{u^{\sigma}\log{u}}\dif{u} + \frac{f(2)}{2^{\sigma}\log{2}} \\
+ \int_{2}^{x}\frac{\left(1+\sigma\log{u}\right)g(u)}{u^{1+\sigma}\log^{2}{u}}\dif{u} + 
\frac{1}{\log{y}}\int_{x}^{xy}\frac{\left(\left(1+\sigma\log{u}\right)\log{\frac{xy}{u}}+\log{u}\right)g(u)}{u^{1+\sigma}\log^{2}{u}}\dif{u}
\end{multline}
by employing partial summation. If $\widetilde{\psi}_{\cL}(u)\leq f(u) + g(u)$ holds for $u\geq x$, where $f$ and $g$ are continuous on $[x,\infty)$, then
\begin{equation}
\label{eq:ineqForEx}
\mathcal{E}_x \leq \int_{x}^{\infty}\frac{\left(f(u)+g(u)\right)\left(1+\frac{3}{2}\log{u}\right)}{u^{5/2}\log^{2}{u}}\dif{u}
\end{equation}
again by partial summation. We are going to use the above inequalities with $x$ and $y$ as in~\eqref{eq:xy}. 

\begin{lemma}
\label{lem:GeneralBoundForS}
Let $\cL\in\cS$ and assume the Riemann Hypothesis. Fix $\alpha\geq\log{2}$ and $\delta\in(0,1/2)$. Then
\begin{multline}
\label{eq:GeneralBoundForS_1}
S_{\cL,x,y}(\sigma) \leq A\left((1+\varepsilon)\mathcal{C}_{\cL}^{R}(\varepsilon),\alpha,\sigma-\varepsilon,\sigma\right)\left(\log{\tau}\right)^{2(1-\sigma+\varepsilon)} 
- \frac{\left(1+\varepsilon\right)\mathcal{C}_{\cL}^{R}(\varepsilon)\sigma 2^{1-\sigma+\varepsilon}}{1-\sigma+\varepsilon} \\
+ O\left(A_1\left(\mathcal{C}_{\cL}^{R}(\varepsilon)+\mathcal{C}_{\cL}^{E},2,\varepsilon,\theta,\sigma,\tau\right)\right),
\end{multline}
\begin{multline}
\label{eq:GeneralBoundForS_2}
\widehat{S}_{\cL,x,y}(\sigma) \leq \eta(\alpha,\sigma,\tau)A\left((1+\varepsilon)\mathcal{C}_{\cL}^{R}(\varepsilon),\alpha,\sigma-\varepsilon,\sigma\right)\frac{\left(\log{\tau}\right)^{2(1-\sigma+\varepsilon)}}{\log{\log{\tau}}} - \frac{(1+\varepsilon)\mathcal{C}_{\cL}^{R}(\varepsilon)\eta(\alpha,\sigma,\tau)}{(1-\sigma+\varepsilon)\log{\log{\tau}}} \\
+ \mathcal{C}_{\cL}^{R}(\varepsilon)\log{\left(2\log{\log{\tau}}\right)}
+O\left(\frac{\mathcal{C}_{\cL}^{R}(\varepsilon)\left(\log{\tau}\right)^{2(1-\sigma+\varepsilon)}}{(1-\sigma+\varepsilon)^{2}\left(\log{\log{\tau}}\right)^2}\right) + O\left(A_1\left(\mathcal{C}_{\cL}^{R}(\varepsilon)+\mathcal{C}_{\cL}^{E},1,\varepsilon,\theta,\sigma,\tau\right)\right) 
\end{multline}
and
\begin{equation}
\label{eq:GeneralBoundForS_3}
\mathcal{E}_{x} \ll 
A_3\left(\frac{\mathcal{C}_{\cL}^{R}(\varepsilon)}{1-2\varepsilon},\mathcal{C}_{\cL}^{R}(\varepsilon)+\mathcal{C}_{\cL}^{E},\varepsilon,\theta,\tau\right)
\end{equation}
for $\varepsilon\in(0,1/2)$, $1/2+\delta\leq\sigma<1$ and sufficiently large $\tau$. Here, $A\left(a,\alpha,u,\sigma\right)$ and $\eta(\alpha,\sigma,\tau)$ are defined by~\eqref{eq:A} and~\eqref{eq:eta}, respectively, and $A_1(a,k,\varepsilon,\theta,\sigma,\tau)$ and $A_3(a,b,\varepsilon,\theta,\tau)$ are defined by~\eqref{eq:MainGeneralA_1} and~\eqref{eq:MainGeneralA_3}, respectively.
\end{lemma}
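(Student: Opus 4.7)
The proof rests on the bound \eqref{eq:main} for $\widetilde{\psi}_{\cL}(u)$, which the partial-summation inequalities \eqref{eq:ineqForS}, \eqref{eq:ineqForShat} and \eqref{eq:ineqForEx} convert into bounds for $S_{\cL,x,y}(\sigma)$, $\widehat{S}_{\cL,x,y}(\sigma)$ and $\mathcal{E}_x$. The plan is to split $\widetilde{\psi}_{\cL}(u)\leq f(u)+g(u)$ with $f(u)\de\mathcal{C}_\cL^R(\varepsilon)u^{1+\varepsilon}$ as the main term and $g(u)$ absorbing the $O$-term on the right of \eqref{eq:main}, then evaluate each integral with $x,y$ as in \eqref{eq:xy}.

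For \eqref{eq:GeneralBoundForS_1} the only non-trivial calculation is on the $f$-part. The substitution $u=xy\,e^{-v}$ in the second integral of \eqref{eq:ineqForS} transforms
\[
\int_2^{x}\frac{f'(u)}{u^\sigma}\dif{u}+\frac{1}{\log y}\int_{x}^{xy}\frac{f'(u)\log(xy/u)}{u^\sigma}\dif{u}
\]
into $(1+\varepsilon)\mathcal{C}_\cL^R(\varepsilon)\cdot(xy)^{1-\sigma+\varepsilon}\bigl(1-y^{-(1-\sigma+\varepsilon)}\bigr)/\bigl((1-\sigma+\varepsilon)^2\log y\bigr)$, plus a boundary contribution from the lower limit $u=2$. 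With $xy=\log^2\tau$ and $\log y=2\alpha/(2\sigma-1)$, the leading piece reduces to $A\bigl((1+\varepsilon)\mathcal{C}_\cL^R(\varepsilon),\alpha,\sigma-\varepsilon,\sigma\bigr)(\log\tau)^{2(1-\sigma+\varepsilon)}$ by \eqref{eq:A}, while the boundary piece combines with $f(2)/2^\sigma$ to yield the explicit term $-(1+\varepsilon)\mathcal{C}_\cL^R(\varepsilon)\sigma 2^{1-\sigma+\varepsilon}/(1-\sigma+\varepsilon)$. Each of the four $g$-integrals in \eqref{eq:ineqForS} is then bounded trivially using $g(u)\ll (\mathcal{C}_\cL^R(\varepsilon)+\mathcal{C}_\cL^E)u^{1/2+\max\{\theta,\varepsilon\}}\log^2 u$; a short case distinction on whether $|\Theta_{\theta,\varepsilon}(\sigma)|$ is small compared to $1/\log\log\tau$ or bounded below collapses them into $A_1(\mathcal{C}_\cL^R(\varepsilon)+\mathcal{C}_\cL^E,2,\varepsilon,\theta,\sigma,\tau)$ as defined in \eqref{eq:MainGeneralA_1}.

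For \eqref{eq:GeneralBoundForS_2} the same strategy is applied to \eqref{eq:ineqForShat}, with the extra $1/\log u$ in every integrand producing three new effects. First, the uniform bound $\log u\geq \log x=2\log\log\tau-2\alpha/(2\sigma-1)$ for $u\in[x,xy]$ factors $\eta(\alpha,\sigma,\tau)/\log\log\tau$ out of the $f$-integral over $[x,xy]$, producing the leading $\eta(\alpha,\sigma,\tau)A(\cdots)(\log\tau)^{2(1-\sigma+\varepsilon)}/\log\log\tau$ term; the second-order remainder coming from $1/\log u=1/\log x-(\log u-\log x)/(\log u\log x)$ then contributes the $O\bigl((\log\tau)^{2(1-\sigma+\varepsilon)}/\bigl((1-\sigma+\varepsilon)^2(\log\log\tau)^2\bigr)\bigr)$ error. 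Second, the companion $f$-integral over $[2,x]$, after antidifferentiation, produces the explicit $\mathcal{C}_\cL^R(\varepsilon)\log(2\log\log\tau)$ term from integrating expressions of the form $1/(u\log u)$ out to $u\asymp\log^2\tau$, together with bounded constants that are absorbed into the $O$-terms. Third, the $f(2)/(2^\sigma\log 2)$ boundary term and the $g$-integrals collapse, exactly as in the previous step but with the extra $1/\log u$ saving one logarithmic factor, into $A_1(\mathcal{C}_\cL^R(\varepsilon)+\mathcal{C}_\cL^E,1,\varepsilon,\theta,\sigma,\tau)$.

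Finally, \eqref{eq:GeneralBoundForS_3} is the simplest: in \eqref{eq:ineqForEx} one inserts the same splitting $\widetilde{\psi}_\cL(u)\leq f(u)+g(u)$ and integrates directly from $x=(\log\tau)^2/y$ to infinity, so that the two summands of $A_3$ defined in \eqref{eq:MainGeneralA_3} appear immediately from the $f$- and $g$-contributions respectively. I expect the main obstacle to be the bookkeeping in the $\widehat{S}_{\cL,x,y}$ step: extracting $\eta(\alpha,\sigma,\tau)$ as a clean multiplicative coefficient while simultaneously producing the explicit $\mathcal{C}_\cL^R(\varepsilon)\log(2\log\log\tau)$ term, and keeping every lower-order piece uniformly absorbed into a single $A_1$, requires a careful expansion of $1/\log u$ about $\log x$ combined with an exact antidifferentiation near $u=xy$, and one must verify that none of the accumulated remainders contaminate the main term.
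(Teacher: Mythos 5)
Your proposal follows the paper's proof essentially verbatim: the same splitting $\widetilde{\psi}_{\cL}\leq f+g$ with $f(u)=\mathcal{C}_{\cL}^{R}(\varepsilon)u^{1+\varepsilon}$, the same partial-summation inequalities \eqref{eq:ineqForS}--\eqref{eq:ineqForEx}, the same case analysis on $|\Theta_{\theta,\varepsilon}(\sigma)|$ versus $1/\log{a}$ for the $g$-integrals (the paper's \eqref{eq:integral}), and the same extraction of $1/\log{x}=\eta(\alpha,\sigma,\tau)/\log{\log{\tau}}$ on $[x,xy]$ combined with the Carneiro--Chandee expansion \eqref{eq:integral2} on $[2,x]$ for $\widehat{S}_{\cL,x,y}$. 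The only cosmetic difference is one of bookkeeping: the paper obtains the $O\bigl((\log{\tau})^{2(1-\sigma+\varepsilon)}(1-\sigma+\varepsilon)^{-2}(\log{\log{\tau}})^{-2}\bigr)$ error from the $[2,x]$ integral via \eqref{eq:int2}, whereas you attribute it to a second-order expansion of $1/\log{u}$ on $[x,xy]$; both yield the same order.
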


\begin{proof}
Because we are using~\eqref{eq:main}, we are taking
\[
f(u) = \mathcal{C}_{\cL}^{R}(\varepsilon)u^{1+\varepsilon}, \quad g(u) =  O(1)\cdot\left(\left(\mathcal{C}_{\cL}^{R}(\varepsilon)+\mathcal{C}_{\cL}^{E}\right)u^{\frac{1}{2}+\max\left\{\theta,\varepsilon\right\}}\log^{2}{u}\right)
\]
in~\eqref{eq:ineqForS},~\eqref{eq:ineqForShat} and~\eqref{eq:ineqForEx}. Note that, for sufficiently large $\tau$, conditions of Lemma~\ref{lem:GeneralBoundForS} guarantee that $x\geq 2$ and $y\geq 2$. Also, $x\asymp\log^{2}{\tau}$ with the implied constants being uniform. The sum of the first three terms on the right-hand side of~\eqref{eq:ineqForS} is
\[
\leq \left(1+\varepsilon\right)\mathcal{C}_{\cL}^{R}(\varepsilon)\frac{(xy)^{1-\sigma+\varepsilon}-x^{1-\sigma+\varepsilon}}{(1-\sigma+\varepsilon)^{2}\log{y}} - \frac{\left(1+\varepsilon\right)\mathcal{C}_{\cL}^{R}(\varepsilon)\sigma 2^{1-\sigma+\varepsilon}}{1-\sigma+\varepsilon}, 
\]
while the sum of the last last two terms of~\eqref{eq:ineqForS} is
\begin{equation*}
\ll \int_{2}^{xy}\frac{|g(u)|}{u^{1+\sigma}}\dif{u} 
\ll \left(\mathcal{C}_{\cL}^{R}(\varepsilon)+\mathcal{C}_{\cL}^{E}\right)\int_{2}^{xy}u^{\Theta_{\theta,\varepsilon}(\sigma)-1}\log^{2}{u} \dif{u}.
\end{equation*}
Here $\Theta_{\theta,\varepsilon}(\sigma)$ is given as in \eqref{def:theta}. We estimate the integral on the right-hand side of the estimate.

Note that for $b\neq 0$ (and $a\geq2$) one has
\[
\int_{2}^{a}u^{b-1}\log{u} \dif{u} = \frac{1}{b^2}\left(a^{b}\left(b\log{a}-1\right)-2^{b}\left(b\log{2}-1\right)\right).
\]
If $b=0$, the above integral is $\ll \log^2{a}$. If $b>0$, then the integral is $\ll \left(a^b/b\right)\log{a}$, and if $b<0$, then it is $ \ll 2^{b}\left(|b|\log{2}+1\right)/b^2\ll 1/b^2$ when $b$ is bounded. In both cases, which include $|b|\geq 1/\log{a}$, we thus have $\ll 1/b^2 + \left(a^b/b\right)\log{a}$. If $|b|<1/\log{a}$, then the right hand-side of the above equality shows that $\ll \log^{2}{a}$. But then also $0\leq|b|a^b\log{a}\ll 1$. Using similar argument for the case $k=2$, we can deduce that
\begin{equation}
\label{eq:integral}
\int_{2}^{a}u^{b-1}\log^{k}{u} \dif{u} \ll \left(1+|b|^{k}a^{b}\log^{k}{a}\right)\min\left\{\frac{1}{|b|^{k+1}},\log^{k+1}{a}\right\}
\end{equation}
for every $a\geq 2$ and $b\in\R$, and $k\in\{1,2\}$. Inequality~\eqref{eq:GeneralBoundForS_1} now easily follows from~\eqref{eq:ineqForS}. 

Turning to~\eqref{eq:GeneralBoundForS_2}, we first estimate the first three terms on the right-hand side of~\eqref{eq:ineqForShat}.
Following~\cite[Appendix A]{CarneiroChandee}, for real number $b$ and $a \geq 2$, we have
\begin{flalign}
0 \leq \int_{2}^{a}\frac{\dif{u}}{u^{b}\log{u}} &= \log{\log{a}} - \log{\log{2}} + \int_{(1-b)\log{2}}^{(1-b)\log{x}}\theta_2(u)\dif{u} \nonumber \\
&= \frac{a^{1-b}-1}{(1-b)\log{a}} + \log{\log{a}} - \frac{2^{1-b}-1}{(1-b)\log{2}} - \log{\log{2}} + \int_{(1-b)\log{2}}^{(1-b)\log{a}}\theta_1(u)\dif{u}, \label{eq:integral2}
\end{flalign}
where $\theta_1(u)$ and $\theta_2(u)$ are defined in~\eqref{eq:thetas}.

If $b\in[0,1)$, then for some $\nu_1>0$ and $\nu_2>1$, we have
\begin{flalign}
\label{eq:Case2}
\int_{(1-b)\log{2}}^{(1-b)\log{a}}\theta_1(u)\dif{u} &\leq \int_{0}^{(1-b)\log{a}}\theta_1(u)\dif{u} = \left(\int_{0}^{\nu_1}+\int_{\nu_1}^{\frac{(1-b)\log{a}}{\nu_2}}+\int_{\frac{(1-\sigma)\log{a}}{\nu_2}}^{(1-b)\log{a}}\right)
\theta_1(u)\dif{u} \nonumber \\
&\leq \frac{\nu_{2}^{2}a^{1-b}}{(1-b)^2\log^{2}{a}} + \frac{1}{\nu_{1}^{2}}a^{\frac{1-b}{\nu_2}} + \int_{0}^{\nu_1}\theta_1(u)\dif{u}.
\end{flalign}
Choosing $\nu_j=j$, we obtain
\begin{equation}
\label{eq:int2}
\int_{2}^{a}\frac{\dif{u}}{u^{b}\log{u}} = \frac{a^{1-b}-1}{(1-b)\log{a}} + \log{\log{a}} + O\left(\frac{a^{1-b}}{(1-b)^{2}\log^{2}{a}}\right)
\end{equation}
for $a\geq2$ and $b\in[0,1)$. 
In addition, we have
\begin{equation*}
    \int_{x}^{xy} \frac{\log{\left(\frac{xy}{u}\right)}}{u^b \log{u}} \dif{u} \leq \frac{1}{\log{x}}\int_{x}^{xy} \frac{\log{\left(\frac{xy}{u}\right)}}{u^b} \dif{u}. 
\end{equation*}
Hence, the sum of the first three terms on the right-hand side of~\eqref{eq:ineqForShat} is
\begin{equation*}
\leq 
\left(1+\varepsilon\right)\mathcal{C}_{\cL}^{R}(\varepsilon)\frac{(xy)^{1-\sigma+\varepsilon}-x^{1-\sigma+\varepsilon}}{(1-\sigma+\varepsilon)^{2}\log{x}\log{y}}
- \frac{(1+\varepsilon)\mathcal{C}_{\cL}^{R}(\varepsilon)}{(1-\sigma+\varepsilon)\log{x}}
+ \mathcal{C}_{\cL}^{R}(\varepsilon)\log{\log{x}}
+ O\left(\frac{\mathcal{C}_{\cL}^{R}(\varepsilon)x^{1-\sigma+\varepsilon}}{(1-\sigma+\varepsilon)^{2}\log^{2}{x}}\right),
\end{equation*}
while the sum of the last two terms is
\[
\ll \int_{2}^{xy}\frac{|g(u)|}{u^{1+\sigma}\log{u}}\dif{u} \ll \left(\mathcal{C}_{\cL}^{R}(\varepsilon)+\mathcal{C}_{\cL}^{E}\right)\int_{2}^{xy}u^{\Theta_{\theta,\varepsilon}(\sigma)-1}\log{u}\dif{u}.
\]
Inequality~\eqref{eq:GeneralBoundForS_2} now follows from~\eqref{eq:integral} and~\eqref{eq:ineqForShat}. 

In order to prove~\eqref{eq:GeneralBoundForS_3}, we have
\[
\mathcal{E}_x \ll \frac{\mathcal{C}_{\cL}^{R}(\varepsilon)}{\left(1-2\varepsilon\right)x^{1/2-\varepsilon}\log{x}} + \frac{\left(\mathcal{C}_{\cL}^{R}(\varepsilon)+\mathcal{C}_{\cL}^{E}\right)\log{x}}{x^{-\Theta_{\theta,\varepsilon}(3/2)}}
\]
by~\eqref{eq:ineqForEx}. Inequality~\eqref{eq:GeneralBoundForS_3} now easily follows from the above.
\end{proof}

\begin{lemma}
\label{lem:GeneralBoundForSV2}
Let $\cL\in\cS$. Fix $\alpha\geq\log{2}$ and $\delta\in(0,1/2)$. Define $m_{i}(\tau)$ for $i\in\{1,2,3,4\}$ by~\eqref{eq:m123} if Conjecture~\ref{conj:SelbergVariant} is true, and by~\eqref{eq:m123V2} if Conjecture~\ref{conj:SelbergVariant2} is true. Then
\begin{multline}
\label{eq:GeneralBoundForSV2_1}
S_{\cL,x,y}(\sigma) \leq A\left(m_1(\tau),\alpha,\sigma,\sigma\right)\left(\log{\tau}\right)^{2-2\sigma} + O\left(A_4(\sigma,\tau)\left(\log{\tau}\right)^{2-2\sigma}\right) \\ 
- \frac{m_1(\tau)\sigma 2^{1-\sigma}}{1-\sigma} +O\left(A_1\left(\mathcal{C}_{\cL}^{E},2,0,\theta,\sigma,\tau\right)\right) 
\end{multline}
and
\begin{multline}
\label{eq:GeneralBoundForSV2_2}
\widehat{S}_{\cL,x,y}(\sigma) \leq \eta(\alpha,\sigma,\tau)A\left(m_1(\tau),\alpha,\sigma,\sigma\right)\frac{\left(\log{\tau}\right)^{2-2\sigma}}{\log{\log{\tau}}}+O\left(A_5(\sigma,\tau)\frac{\left(\log{\tau}\right)^{2-2\sigma}}{\log{\log{\tau}}}\right)
- \frac{m_1(\tau)\eta(\alpha,\sigma,\tau)}{(1-\sigma)\log{\log{\tau}}} \\
+ m_1(\tau)\log{\left(2\log{\log{\tau}}\right)} + O\left(A_1\left(\mathcal{C}_{\cL}^{E},1,0,\theta,\sigma,\tau\right)\right) 
\end{multline}
for $1/2+\delta\leq\sigma<1$ and sufficiently large $\tau$. Moreover, for sufficiently large $\tau$ we also have
\begin{equation}
\label{eq:GeneralBoundForSV2_3}
\mathcal{E}_x \ll A_3\left(m_4(\tau),\mathcal{C}_{\cL}^{E},0,\theta,\tau\right)
\end{equation}
 Here, $A\left(a,\alpha,u,\sigma\right)$ and $\eta(\alpha,\sigma,\tau)$ are defined by~\eqref{eq:A} and~\eqref{eq:eta}, respectively, and $A_1(a,k,\varepsilon,\theta,\sigma,\tau)$ and $A_3(a,b,\varepsilon,\theta,\tau)$ are defined by~\eqref{eq:MainGeneralA_1} and~\eqref{eq:MainGeneralA_3}, respectively, and $A_4(\sigma,\tau)$ and $A_5(\sigma,\tau)$ are defined by~\eqref{eq:MainGeneralA_4} and~\eqref{eq:MainGeneralA_5}, respectively.
\end{lemma}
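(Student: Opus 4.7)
The plan is to rerun the same partial-summation scheme used in the proof of Lemma~\ref{lem:GeneralBoundForS}, built on inequalities~\eqref{eq:ineqForS},~\eqref{eq:ineqForShat} and~\eqref{eq:ineqForEx} with the same choice~\eqref{eq:xy} of $x,y$ (so $xy=(\log\tau)^2$), but feeding in the sharper bounds~\eqref{eq:main2} (under Conjecture~\ref{conj:SelbergVariant}) or~\eqref{eq:main3} (under Conjecture~\ref{conj:SelbergVariant2}) in place of the Ramanujan-hypothesis bound~\eqref{eq:main}. This way the structure of the estimates is identical to that of Lemma~\ref{lem:GeneralBoundForS}; only the coefficient of the main term and the shape of the first-order error change.

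\textbf{Finite range: main and $A_1$-errors.} On $[2,xy]$, the increasingness clauses in Conjectures~\ref{conj:SelbergVariant} and~\ref{conj:SelbergVariant2} let me bound $\sqrt{\mathcal{C}_{\cL}^{P_1}(u)}\leq m_1(\tau)$ (resp.~$\widehat{\mathcal{C}_{\cL}^{P_1}}(u)\leq m_1(\tau)$), which validates the splitting $\widetilde\psi_{\cL}(u)\leq f(u)+g(u)$ with $f(u)=m_1(\tau)\,u$ and $g$ collecting the error pieces of~\eqref{eq:main2} or~\eqref{eq:main3}. Since $f'\equiv m_1(\tau)$, the first three summands on the right-hand sides of~\eqref{eq:ineqForS} and~\eqref{eq:ineqForShat} are evaluated by the exact same algebra as in the proof of Lemma~\ref{lem:GeneralBoundForS}, with $\varepsilon=0$ and $(1+\varepsilon)\mathcal{C}_{\cL}^R(\varepsilon)$ replaced by $m_1(\tau)$; this produces the claimed main term $A(m_1(\tau),\alpha,\sigma,\sigma)(\log\tau)^{2-2\sigma}$ together with $-m_1(\tau)\sigma 2^{1-\sigma}/(1-\sigma)$ for $S_{\cL,x,y}$, and, via~\eqref{eq:integral2} and the identity $\log x=\log\log\tau/\eta(\alpha,\sigma,\tau)$, the $\eta$-factor in front of $A(m_1(\tau),\alpha,\sigma,\sigma)(\log\tau)^{2-2\sigma}/\log\log\tau$ together with the constants $-m_1(\tau)\eta/((1-\sigma)\log\log\tau)$ and $m_1(\tau)\log(2\log\log\tau)$ for $\widehat S_{\cL,x,y}$. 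The common $O(\mathcal{C}_{\cL}^{E}u^{1/2+\theta}\log^2 u)$ piece of $g$ gives the $O(A_1(\mathcal{C}_{\cL}^E,2,0,\theta,\sigma,\tau))$ and $O(A_1(\mathcal{C}_{\cL}^E,1,0,\theta,\sigma,\tau))$ contributions, exactly as in Lemma~\ref{lem:GeneralBoundForS} via~\eqref{eq:integral}.

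\textbf{The $A_4,A_5$ errors.} The genuinely new input is the $m_4(\tau)u/\sqrt{\log u}$ piece of $g$ in Case~1 and the $m_2(\tau)u/\log u$ piece in Case~2. In both cases $\int_2^{xy}g(u)/u^{1+\sigma}\,du$ and its $\log$-divided counterpart from~\eqref{eq:ineqForShat} reduce, through~\eqref{eq:integral2}-type asymptotics, to $m_3(\tau)(\log\tau)^{2-2\sigma}/(1-\sigma)$ at leading order — the $\sqrt{\log u}$ versus $\log u$ asymmetry is precisely what makes $m_3=m_2/\sqrt{\log\log\tau}$ in Case~1 and $m_3=m_2/\log\log\tau$ in Case~2, so the two cases merge uniformly into $O(A_4(\sigma,\tau)(\log\tau)^{2-2\sigma})$ and, for $\widehat S$, into $O(A_5(\sigma,\tau)(\log\tau)^{2-2\sigma}/\log\log\tau)$ once the $\log\log\log\tau$-corrections in~\eqref{eq:MainGeneralA_4}--\eqref{eq:MainGeneralA_5} are tracked.

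\textbf{The $\mathcal{E}_x$-bound and the main obstacle.} For $\mathcal{E}_x$ the integration runs on the unbounded range $[x,\infty)$, so $m_1(\tau)=\sqrt{\mathcal{C}_{\cL}^{P_1}((\log\tau)^2)}$ no longer dominates $\sqrt{\mathcal{C}_{\cL}^{P_1}(u)}$ for large $u$; this is the real obstacle and the decisive use of the \emph{second} monotonicity clauses of the two conjectures. The decreasingness of $u^{-1/2}\mathcal{C}_{\cL}^{P_1}(u)$ (resp.~$u^{-1/4}\widehat{\mathcal{C}_{\cL}^{P_1}}(u)$) for $u\geq x_0$ gives $\sqrt{\mathcal{C}_{\cL}^{P_1}(u)}\leq(u/x)^{1/4}\sqrt{\mathcal{C}_{\cL}^{P_1}(x)}\leq(u/x)^{1/4}m_1(\tau)$, and the analogous estimate with $m_4(\tau)$ for the $\sqrt{\mathcal{C}_{\cL}^{P_1}+\mathcal{C}_{\cL}^{P_2}}$ piece (using $(u/x)^{1/2}\geq 1$). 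Inserted into~\eqref{eq:ineqForEx}, the integrand decays like $u^{-5/4}/\log u$, which integrates to $\ll m_4(\tau)/(x^{1/2}\log x)\asymp m_4(\tau)/(\log\tau\,\log\log\tau)$; the $\mathcal{C}_{\cL}^E$-contribution is treated as in Lemma~\ref{lem:GeneralBoundForS} and gives $\mathcal{C}_{\cL}^E(\log\tau)^{2\Theta_{\theta,0}(3/2)}\log\log\tau$. Assembling these two pieces produces exactly $A_3(m_4(\tau),\mathcal{C}_{\cL}^E,0,\theta,\tau)$, completing the proof.
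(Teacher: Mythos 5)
Your proposal is correct and follows essentially the same route as the paper: the same partial-summation inequalities \eqref{eq:ineqForS}--\eqref{eq:ineqForEx} with $x,y$ from \eqref{eq:xy}, the splitting $\widetilde{\psi}_{\cL}\leq f+g$ driven by \eqref{eq:main2}/\eqref{eq:main3}, the reduction of the new error pieces to \eqref{eq:int2}-type integrals for $A_4$ and $A_5$, and the use of the second monotonicity clause of the conjectures to control $\mathcal{E}_x$ on $[x,\infty)$. Your explicit identification of why the $\sqrt{\log u}$ versus $\log u$ asymmetry forces the two different definitions of $m_3(\tau)$, and of where the decreasingness of $u^{-1/2}\mathcal{C}_{\cL}^{P_1}(u)$ (resp.\ $u^{-1/4}\widehat{\mathcal{C}_{\cL}^{P_1}}(u)$) is indispensable, matches the paper's argument exactly.
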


\begin{proof}
We are going to prove Lemma~\ref{lem:GeneralBoundForSV2} in the case that Conjecture~\ref{conj:SelbergVariant} is true. The proof of the other case is similar and is thus left to the reader. 

Because we are using~\eqref{eq:main2}, we are taking
\begin{equation}
\label{eq:fg}
f(u) = u\sqrt{\mathcal{C}_{\cL}^{P_1}\left(\log^{2}{\tau}\right)}, \quad g(u) = O(1)\cdot\left(\sqrt{\mathcal{C}_{\cL}^{P_1}\left(\log^{2}{\tau}\right)+\mathcal{C}_{\cL}^{P_2}}\frac{u}{\sqrt{\log{u}}}+\mathcal{C}_{\cL}^{E}u^{\frac{1}{2}+\theta}\log^{2}{u}\right)
\end{equation}
in~\eqref{eq:ineqForS} and~\eqref{eq:ineqForShat} in order to prove~\eqref{eq:GeneralBoundForSV2_1} and~\eqref{eq:GeneralBoundForSV2_2}. Note that this is allowed since $\mathcal{C}_{\cL}^{P_1}(u)$ is an increasing function and $u\leq xy=\log^{2}{\tau}$. The proof is very similar to that of Lemma~\ref{lem:GeneralBoundForS}. 

To estimate the first three terms in \eqref{eq:GeneralBoundForSV2_1} and \eqref{eq:GeneralBoundForSV2_1}, we just change the constants in-front and replace the terms $\sigma-\varepsilon$ with $\sigma$. In case for~\eqref{eq:GeneralBoundForSV2_1}, for the last two terms, we are using
\begin{multline*}
\int_{2}^{xy}\frac{u}{u^{1+\sigma}\sqrt{\log{u}}}\dif{u} \leq \sqrt{\log{(xy)}}\int_{2}^{xy}\frac{\dif{u}}{u^{\sigma}\log{u}} \\ 
= \frac{(xy)^{1-\sigma}-1}{(1-\sigma)\sqrt{\log{(xy)}}} + \sqrt{\log{(xy)}}\log{\log{(xy)}} + O\left(\frac{(xy)^{1-\sigma}}{(1-\sigma)^{2}\sqrt{\log{(xy)}}\log{(xy)}}\right)
\end{multline*}
where the inequality follows from~\eqref{eq:int2}. Also, using partial integration and estimate ~\eqref{eq:int2} again, we obtain
\begin{flalign*}
\int_{2}^{xy}\frac{u}{u^{1+\sigma}\sqrt{\log{u}}\log{u}}\dif{u} &\leq \sqrt{\log{(xy)}}\int_{2}^{xy}\frac{\dif{u}}{u^{\sigma}\log^{2}{u}}=\sqrt{\log{(xy)}}\left(-\left[\frac{1}{u^{\sigma-1}\log{u}}\right]_2^{xy}-\int_2^{xy} \frac{\sigma-1}{u^\sigma\log{u}} \dif{u}\right) \\ 
&\leq \left((1-\sigma)\log{\log{(xy)}}+\frac{2^{1-\sigma}}{\log{2}}\right)\sqrt{\log{(xy)}} + O\left(\frac{(xy)^{1-\sigma}}{(1-\sigma)\sqrt{\log{(xy)}}\log{(xy)}}\right)
\end{flalign*}
for~\eqref{eq:GeneralBoundForSV2_2}. 

For the proof of~\eqref{eq:GeneralBoundForSV2_3} we are taking
\[
f(u) = u\sqrt{\mathcal{C}_{\cL}^{P_1}(u)+\mathcal{C}_{\cL}^{P_2}}, \quad g(u) = O(1)\cdot\mathcal{C}_{\cL}^{E}u^{\frac{1}{2}+\theta}\log^{2}{u}
\]
in~\eqref{eq:ineqForEx}. Then
\begin{flalign*}
\int_{x}^{\infty}\frac{\left(1+\frac{3}{2}\log{u}\right)f(u)}{u^{5/2}\log^{2}{u}}\dif{u} &\ll \frac{1}{\log{x}}\int_{x}^{\infty}u^{-5/4}\sqrt{\frac{\mathcal{C}_{\cL}^{P_1}(u)+\mathcal{C}_{\cL}^{P_2}}{u^{1/2}}}\dif{u} \\ 
&\leq \frac{1}{\log{x}}\sqrt{\frac{\mathcal{C}_{\cL}^{P_1}(x)+\mathcal{C}_{\cL}^{P_2}}{x^{1/2}}}\int_{x}^{\infty}u^{-5/4}\dif{x} \ll \frac{\sqrt{\mathcal{C}_{\cL}^{P_1}(x)+\mathcal{C}_{\cL}^{P_2}}}{x^{1/2}\log{x}}
\end{flalign*}
for sufficiently large $x$. The first term in~\eqref{eq:GeneralBoundForSV2_3} now follows since $\log^{2}{\tau}\ll x\leq\log^{2}{\tau}$. Derivation of the second term in~\eqref{eq:GeneralBoundForSV2_3} is the same as in the proof of Lemma~\ref{lem:GeneralBoundForS} and will thus be omitted.
\end{proof}

\begin{lemma}
\label{lem:GeneralBoundFor1Line}
Let $\cL\in\cS$. Define $m_{i}(\tau)$ for $i\in\{1,2,3\}$ by~\eqref{eq:m123} if Conjecture~\ref{conj:SelbergVariant} is true, and by~\eqref{eq:m123V2} if Conjecture~\ref{conj:SelbergVariant2} is true. Then
\begin{equation}
\label{eq:GeneralBoundFor1Line1}
S_{\cL,x,y}(1) \leq \left(2m_1(\tau)+O\left(m_3(\tau)\log{\log{\log{\tau}}}\right)\right)\log{\log{\tau}} + O\left(\frac{\mathcal{C}_{\cL}^{E}}{\left(\frac{1}{2}-\theta\right)^3}\right)
\end{equation}
and
\begin{equation}
\label{eq:GeneralBoundFor1Line2}
\widehat{S}_{\cL,x,y}(1) \leq m_1(\tau)\left(\log{\left(2\log{\log{\tau}}\right)+2}\right) + O\left(m_2(\tau)+\frac{\mathcal{C}_{\cL}^{E}}{\left(\frac{1}{2}-\theta\right)^2}\right)
\end{equation}
for sufficiently large $\tau$. 
\end{lemma}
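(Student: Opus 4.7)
\emph{Approach.} I would follow the template of the proof of Lemma~\ref{lem:GeneralBoundForSV2} but specialize to $\sigma=1$, where the integrals that previously carried a $(1-\sigma)^{-1}$ factor become logarithmic. The starting points are the partial-summation bounds~\eqref{eq:ineqForS} and~\eqref{eq:ineqForShat} applied with the splitting $\widetilde{\psi}_{\cL}(u)\le f(u)+g(u)$ from~\eqref{eq:fg} under Conjecture~\ref{conj:SelbergVariant}, or the analogous splitting provided by~\eqref{eq:main3} under Conjecture~\ref{conj:SelbergVariant2}. Throughout, $x$ and $y$ are as in~\eqref{eq:xy}, so that at $\sigma=1$ one has $\log y=2\alpha$ and $\log(xy)=2\log\log\tau$.

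\emph{Main contribution from $f$.} With $f(u)=m_1(\tau)u$, the three $f$-driven pieces in~\eqref{eq:ineqForS} evaluate exactly at $\sigma=1$: the simple integral $\int_2^x du/u=\log x-\log 2$, the weighted integral $\frac{1}{\log y}\int_x^{xy}\log(xy/u)\,du/u=\tfrac{1}{2}\log y$ via the substitution $w=\log(xy/u)$, and the boundary term $f(2)/2=m_1(\tau)$. Summing and using $\log x=2\log\log\tau-2\alpha$ yields $2m_1(\tau)\log\log\tau+m_1(\tau)(1-\alpha-\log 2)$, in which the second summand is non-positive for $\alpha\ge\log 2$ and may be discarded in an upper bound, producing the main term of~\eqref{eq:GeneralBoundFor1Line1}. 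The parallel calculation for $\widehat{S}_{\cL,x,y}(1)$ uses $\int_2^x du/(u\log u)=\log\log x-\log\log 2$ together with, after the same substitution, an expansion of $\log\bigl(\log(xy)/\log x\bigr)\asymp\log y/\log x$; this produces $m_1(\tau)\log\log(xy)$ plus an $O(m_1(\tau))$ correction that matches the ``$+2$'' in~\eqref{eq:GeneralBoundFor1Line2}.

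\emph{Remainder from $g$, non-Euler portion.} Under Conjecture~\ref{conj:SelbergVariant} the relevant part of $g$ is $\ll m_2(\tau)u/\sqrt{\log u}$, and the bound $\int_2^x du/(u\sqrt{\log u})\ll\sqrt{\log\log\tau}$ gives a contribution $O(m_2(\tau)\sqrt{\log\log\tau})=O(m_3(\tau)\log\log\tau)$ to $S_{\cL,x,y}(1)$; for $\widehat{S}_{\cL,x,y}(1)$ the extra $\log u$ makes the corresponding integral convergent and contributes $O(m_2(\tau))$. Under Conjecture~\ref{conj:SelbergVariant2} the term $\widehat{\mathcal{C}_{\cL}^{P_2}}u/\log u$ in $g$ produces the integral $\int_2^x du/(u\log u)=O(\log\log\log\tau)$, yielding $O(m_2(\tau)\log\log\log\tau)=O(m_3(\tau)\log\log\tau\log\log\log\tau)$ for $S_{\cL,x,y}(1)$ and $O(m_2(\tau))$ for $\widehat{S}_{\cL,x,y}(1)$. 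In both conjectures these bounds are absorbed into the stated $(m_3(\tau)\log\log\log\tau)\log\log\tau$ and $O(m_2(\tau))$ error terms.

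\emph{Euler portion and main obstacle.} The piece of $g$ originating from $\mathcal{C}_{\cL}^{E}u^{1/2+\theta}\log^2 u$ produces integrands of the form $u^{-3/2+\theta}\log^k u$ on $[2,\infty)$, which are absolutely convergent since $\theta<1/2$. Using the standard estimate $\int_2^\infty u^{-1-a}\log^k u\,du\ll k!/a^{k+1}$ with $a=1/2-\theta$ (seen via the substitution $u=e^v$), one obtains $O(\mathcal{C}_{\cL}^{E}/(1/2-\theta)^3)$ for $S_{\cL,x,y}(1)$ and $O(\mathcal{C}_{\cL}^{E}/(1/2-\theta)^2)$ for $\widehat{S}_{\cL,x,y}(1)$. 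The main technical care is in keeping this dependence on $1/2-\theta$ sharp (since $\theta$ can approach $1/2$) while simultaneously unifying the treatment under both conjectures; otherwise the computation is routine once the integrals at $\sigma=1$ are organised as above.
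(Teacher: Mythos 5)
Your proposal is correct and follows essentially the same route as the paper's proof: partial summation via~\eqref{eq:ineqForS} and~\eqref{eq:ineqForShat} with the splitting $\widetilde{\psi}_{\cL}\le f+g$, exact evaluation of the $f$-integrals at $\sigma=1$ giving the main terms $2m_1(\tau)\log\log\tau$ and $m_1(\tau)\log\log(xy)$, and absorption of the $g$-contributions into the stated error terms (the paper fixes $\alpha=1$ and writes out only the Conjecture~\ref{conj:SelbergVariant} case, but your treatment of general $\alpha\ge\log 2$ and of Conjecture~\ref{conj:SelbergVariant2} is consistent with it). The only step you leave implicit is the numerical verification $1/\log 2-\log\log 2+\log y/(2\log x)\le 2$ needed to realize the explicit constant $2$ in~\eqref{eq:GeneralBoundFor1Line2}, which does hold for $\tau$ sufficiently large.
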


\begin{proof}
We are going to prove Lemma~\ref{lem:GeneralBoundFor1Line} in the case that Conjecture~\ref{conj:SelbergVariant} is true. The proof of the other case is similar and is thus left to the reader.

Let $\sigma=1$ and $\alpha=1$. Because we are using~\eqref{eq:main2}, we are taking~\eqref{eq:fg} in~\eqref{eq:ineqForS} and~\eqref{eq:ineqForShat} in order to prove~\eqref{eq:GeneralBoundFor1Line1} and~\eqref{eq:GeneralBoundFor1Line2}. The sum of the first three terms on the right-hand side of~\eqref{eq:ineqForS} is 
\[
m_1(\tau)\left(\log{(xy)}-\frac{1}{2}\log{y}-\log{2}+1\right) = m_1(\tau)\left(2\log{\log{\tau}}-\alpha-\log{2}+1\right) \leq 2m_1(\tau)\log{\log{\tau}},
\]
while the sum of the last two terms is
\[
\ll m_2(\tau)\int_{2}^{xy}\frac{\dif{u}}{u\sqrt{\log{u}}} + A_1\left(\mathcal{C}_{\cL}^{E},2,0,\theta,1,\tau\right) \ll m_2(\tau)\sqrt{\log{xy}}\log{\log{(xy)}} + \frac{\mathcal{C}_{\cL}^{E}}{\left(\frac{1}{2}-\theta\right)^{3}}.
\]
This proves~\eqref{eq:GeneralBoundFor1Line1}. Similarly, the sum of the first three terms on the right-hand side of~\eqref{eq:ineqForShat} is 
\begin{flalign*}
&\leq m_1(\tau)\int_2^x \frac{\dif{u}}{u\log{u}}+\frac{m_1(\tau)}{\log{y}\log{x}}\int_x^{xy} \frac{\log{\frac{xy}{u}}}{u} \dif{u}+\frac{f(2)}{2\log{2}}\\
&= m_1(\tau)\left(\log{\log{x}}+\frac{\log{y}}{2\log{x}}+\frac{1}{\log{2}}-\log{\log{2}}\right) 
\leq m_1(\tau)\left(\log{\left(2\log{\log{\tau}}\right)}+2\right),
\end{flalign*}
while the sum of the last two terms is
\[
\ll m_2(\tau)\int_{2}^{xy}\frac{\dif{u}}{u\log^{3/2}{u}} + A_1\left(\mathcal{C}_{\cL}^{E},1,0,\theta,1,\tau\right) \ll m_2(\tau) + \frac{\mathcal{C}_{\cL}^{E}}{\left(\frac{1}{2}-\theta\right)^{2}}.
\]
This proves~\eqref{eq:GeneralBoundFor1Line2}. The proof is thus complete.
\end{proof}

The following result is needed in order to estimate a certain term in the Guinand--Weil formula, see Remark~\ref{rem:LemmaOverZeros}.

\begin{lemma}
\label{lem:GeneralBoundForGW}
Let $\cL\in\cS$. Then 
\[
\sum_{n\leq\left(\log{\tau}\right)^{2}}\frac{\left|\Lambda_{\cL}(n)\right|}{n} \ll 
\left\{
\begin{array}{ll}
  A_2\left(\mathcal{C}_{\cL}^{R}(\varepsilon),\mathcal{C}_{\cL}^{R}(\varepsilon)+\mathcal{C}_{\cL}^{E},\varepsilon,\theta,\tau\right), & \textrm{RH and}\;\varepsilon\in(0,1/2), \\
  A_2\left(m_4(\tau),\mathcal{C}_{\cL}^{E},0,\theta,\tau\right), & \textrm{Conjecture~\ref{conj:SelbergVariant} and}\;m_4(\tau)\; \textrm{from~\eqref{eq:m123}}, \\
  A_2\left(m_4(\tau),\mathcal{C}_{\cL}^{E},0,\theta,\tau\right), & \textrm{Conjecture~\ref{conj:SelbergVariant2} and}\;m_4(\tau)\; \textrm{from~\eqref{eq:m123V2}}
\end{array}
\right.
\]
for sufficiently large $\tau$, and $A_2\left(a,b,\varepsilon,\theta,\tau\right)$ is defined by~\eqref{eq:MainGeneralA_2}.
\end{lemma}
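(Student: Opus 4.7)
The plan is to reduce the problem to estimating $\widetilde{\psi}_{\cL}$ via partial summation. Writing $X=\log^{2}\tau$, the idea is to apply Abel summation to obtain
\[
\sum_{n\leq X}\frac{|\Lambda_{\cL}(n)|}{n} = \frac{\widetilde{\psi}_{\cL}(X)}{X} + \int_{2}^{X}\frac{\widetilde{\psi}_{\cL}(u)}{u^{2}}\,\mathrm{d}u,
\]
and then to substitute, in each of the three cases, the corresponding estimate for $\widetilde{\psi}_{\cL}(u)$ already proved in Section~\ref{sec:PrimesForS}: estimate~\eqref{eq:main} under RH, estimate~\eqref{eq:main2} under Conjecture~\ref{conj:SelbergVariant}, and estimate~\eqref{eq:main3} under Conjecture~\ref{conj:SelbergVariant2}. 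The boundary term $\widetilde{\psi}_{\cL}(X)/X$ will in all three cases be dominated by the contribution of the integral, so the proof reduces to evaluating two or three explicit integrals.

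In the RH case, plugging in~\eqref{eq:main} produces the two integrals
$\mathcal{C}_{\cL}^{R}(\varepsilon)\int_{2}^{X}u^{\varepsilon-1}\mathrm{d}u$ and
$(\mathcal{C}_{\cL}^{R}(\varepsilon)+\mathcal{C}_{\cL}^{E})\int_{2}^{X}u^{\Theta_{\theta,\varepsilon}(1)-1}\log^{2}u\,\mathrm{d}u$. The first is bounded by $X^{\varepsilon}\min\{1/\varepsilon,\log X\}=(\log\tau)^{2\varepsilon}\min\{1/\varepsilon,2\log\log\tau\}$, using the elementary inequality $X^{\varepsilon}-1\leq \varepsilon X^{\varepsilon}\log X$ for $\varepsilon>0$. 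The second is handled exactly by inequality~\eqref{eq:integral} (derived in the proof of Lemma~\ref{lem:GeneralBoundForS}) with $k=2$ and $b=\Theta_{\theta,\varepsilon}(1)<0$, yielding a bound of order $\min\{1/|\Theta_{\theta,\varepsilon}(1)|^{3},(\log\log\tau)^{3}\}$. Combined, these give precisely the RH branch of the claimed bound.

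In the cases under Conjectures~\ref{conj:SelbergVariant} and~\ref{conj:SelbergVariant2}, the term involving $u^{1/2+\theta}\log^{2}u$ contributes the same $\min\{1/|\Theta_{\theta,0}(1)|^{3},(\log\log\tau)^{3}\}$ factor (now with $\varepsilon=0$) by the same application of~\eqref{eq:integral}. For the main contributions I plan to use the monotonicity of $\mathcal{C}_{\cL}^{P_{1}}$ and of $\widehat{\mathcal{C}_{\cL}^{P_{1}}}$ in order to pull their values at $u$ out of the integrand, estimating them by the corresponding values at $X=\log^{2}\tau$ which match, by definition, the functions $m_{1}(\tau)$ and $m_{4}(\tau)$ in~\eqref{eq:m123} and~\eqref{eq:m123V2}. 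Under Conjecture~\ref{conj:SelbergVariant} this gives contributions of order $m_{1}(\tau)\log\log\tau$ from $\int u^{-1}\sqrt{\mathcal{C}_{\cL}^{P_{1}}(u)}\,\mathrm{d}u$ and $m_{4}(\tau)\sqrt{\log\log\tau}$ from $\int u^{-1}(\log u)^{-1/2}\sqrt{\mathcal{C}_{\cL}^{P_{1}}(u)+\mathcal{C}_{\cL}^{P_{2}}}\,\mathrm{d}u$; under Conjecture~\ref{conj:SelbergVariant2} the analogous integrals yield $m_{1}(\tau)\log\log\tau$ and $m_{2}(\tau)\log\log\log\tau$. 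In both cases all of these are absorbed into $m_{4}(\tau)\log\log\tau$, matching the $\varepsilon=0$ specialisation of $A_{2}$.

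The only real care point, rather than an obstacle, is to keep the correct interaction between $\varepsilon$ and $\log\log\tau$ so that the expression $\min\{1/\varepsilon,\log\log\tau\}$ appears in the RH case while the limit $\varepsilon\to 0$ is read as $\log\log\tau$ in the two conjectural cases; this is automatic once one bounds $\int_{2}^{X}u^{\varepsilon-1}\mathrm{d}u$ in the two different ways mentioned above. Since the boundary term $\widetilde{\psi}_{\cL}(X)/X$ is in every instance at most a constant multiple of the largest term coming from the integral (e.g.\ $\mathcal{C}_{\cL}^{R}(\varepsilon)(\log\tau)^{2\varepsilon}\leq A_{2}$), the bound in the statement follows directly, with all implied constants absolute.
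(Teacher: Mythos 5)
Your proposal is correct and follows essentially the same route as the paper: partial summation to reduce to $\widetilde{\psi}_{\cL}(X)/X+\int_{2}^{X}\widetilde{\psi}_{\cL}(u)u^{-2}\,\dif{u}$ with $X=\log^{2}\tau$, then substitution of~\eqref{eq:main},~\eqref{eq:main2} or~\eqref{eq:main3} and the integral estimates of type~\eqref{eq:integral}. The only (harmless) divergence is that the paper bounds the conjectural secondary terms crudely via $x/\log{x}\ll x/\sqrt{\log{x}}\ll x$ rather than evaluating those integrals more precisely as you do; both yield the same $A_2$ bound.
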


\begin{proof}
By partial summation, using notation \eqref{eq:PsiTilde},
\begin{equation}
\label{eq:lemmaPS}    
\sum_{n\leq X}\frac{\left|\Lambda_{\cL}(n)\right|}{n} = \frac{\widetilde{\psi}_{\cL}(X)}{X} + \int_{2}^{X}\frac{\widetilde{\psi}_{\cL}(u)}{u^{2}}\dif{u}
\end{equation}
for $X\geq 2$. By~\eqref{eq:main}, the right-hand side of~\eqref{eq:lemmaPS} is
\[
\leq \mathcal{C}_{\cL}^{R}(\varepsilon)\left(X^{\varepsilon}+\frac{X^{\varepsilon}-2^{\varepsilon}}{\varepsilon}\right) + O\left(\left(\mathcal{C}_{\cL}^{R}(\varepsilon)+\mathcal{C}_{\cL}^{E}\right)\left(X^{\Theta_{\theta,\varepsilon}(1)}\log^{2}{X}+\int_{2}^{X}u^{\Theta_{\theta,\varepsilon}(2)}\log^{2}{u}\dif{u}\right)\right).
\]
The first term is
\[
\ll \mathcal{C}_{\cL}^{R}(\varepsilon) X^{\varepsilon}\min\left\{\frac{1}{\varepsilon},\log{X}\right\},
\]
and in order to bound the second term similarly as in \eqref{eq:integral}, we note that
\[
\int_{2}^{a}u^{b}\log^{2}{u}\dif{u} \ll \min\left\{\frac{1}{|1+b|^3},\log^{3}{a}\right\}
\]
for every $a\geq 2$ and $b\leq-1$. The first estimate from Lemma~\ref{lem:GeneralBoundForGW} now follows since $\Theta_{\theta,\varepsilon}(2)+1=\Theta_{\theta,\varepsilon}(1)<0$ and $\log^{2}{a}\ll b^{-3}a^{b}$ uniformly for $a\geq 2$ and $b>0$. Derivation of the remaining two estimates is very similar to the above, just now with employing~\eqref{eq:main2} and~\eqref{eq:main3} in~\eqref{eq:lemmaPS}, and estimating $x/\log{x} \ll x/\sqrt{\log{x}} \ll x$. 
\end{proof}

\begin{remark}
In all of the lemmas in this section, in the case $s \in \mathbb{R}$ the term $\tau$ can be replaced with $\sq$.
\end{remark}

\subsection{The case when $\cL\in\cSP$}
\label{sec:PrimesForSP}

Throughout this section the truth of RH is assumed.

\begin{lemma}
\label{sec:Ex}
If $\cL\in\mathcal{SP}$, then
\begin{equation}
\label{eq:Ex}
\frac{1}{m}\mathcal{E}_{x} \leq \frac{2\eta\left(\alpha,\sigma,\tau\right)\exp{\left(\frac{\alpha}{2\sigma-1}\right)}}{\log{\tau}\log{\log{\tau}}}
+ \frac{5\exp{\left(\frac{2\alpha}{2\sigma-1}\right)}\left(1+\frac{\log{\log{\tau}}}{\eta\left(\alpha,\sigma,\tau\right)}\right)}{16\pi\log^{2}{\tau}},
\end{equation}
where $\eta\left(\alpha,\sigma,\tau\right)$ is defined by~\eqref{eq:eta}.
\end{lemma}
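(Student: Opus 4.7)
The plan is to reduce everything to a sum for the Riemann zeta-function via the polynomial Euler product bound, apply Abel summation, insert Schoenfeld's explicit form $|\psi(u)-u|\leq\sqrt{u}\log^{2}{u}/(8\pi)$ coming from RH for $\zeta(s)$, and then recognise that the substitution $x=y^{-1}\log^{2}{\tau}$ with $y=\exp(\alpha/(\sigma-1/2))$ turns the resulting analytic expressions directly into the quantities on the right of~\eqref{eq:Ex}.

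First, since $\cL\in\cSP$, the bound~\eqref{eq:BoundOnLambda} gives $|\Lambda_{\cL}(n)|\leq m\Lambda(n)$, so it suffices to bound $\sum_{n>x}\Lambda(n)/(n^{3/2}\log{n})$. I would apply Abel summation against $\psi(u)=\sum_{n\leq u}\Lambda(n)$, noting $\psi(u)/(u^{3/2}\log{u})\to0$, to obtain
\[
\sum_{n>x}\frac{\Lambda(n)}{n^{3/2}\log{n}} = -\frac{\psi(x)}{x^{3/2}\log{x}} + \int_{x}^{\infty}\psi(u)\left(\frac{3}{2u^{5/2}\log{u}}+\frac{1}{u^{5/2}\log^{2}{u}}\right)\dif{u}.
\]
Splitting $\psi(u)=u+R(u)$, the main-term piece is handled via the substitution $u=v^{2}$ together with one integration by parts, which yields the identities
\[
\int_{x}^{\infty}\frac{\dif{u}}{u^{3/2}\log{u}} = \frac{2}{\sqrt{x}\log{x}} - \int_{\sqrt{x}}^{\infty}\frac{\dif{v}}{v^{2}\log^{2}{v}}, \qquad \int_{x}^{\infty}\frac{\dif{u}}{u^{3/2}\log^{2}{u}} = \frac{1}{2}\int_{\sqrt{x}}^{\infty}\frac{\dif{v}}{v^{2}\log^{2}{v}}.
\]
Combining $-1/(\sqrt{x}\log{x})$ with $3/2$ times the first integral and the second integral, the two $K$-integrals cancel and one is left with $2/(\sqrt{x}\log{x})$ minus a non-negative remainder, i.e., the clean upper bound $2/(\sqrt{x}\log{x})$.

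For the error I would invoke Schoenfeld's bound $|R(u)|\leq\sqrt{u}\log^{2}{u}/(8\pi)$, which is valid throughout $[x,\infty)$ once $\tau$ is sufficiently large (so that $x\geq 73.2$). A direct computation using $\int_{x}^{\infty}\log{u}/u^{2}\,\dif{u}=(\log{x}+1)/x$ and $\int_{x}^{\infty}\dif{u}/u^{2}=1/x$ then yields
\[
\frac{|R(x)|}{x^{3/2}\log{x}} + \int_{x}^{\infty}|R(u)|\left(\frac{3}{2u^{5/2}\log{u}}+\frac{1}{u^{5/2}\log^{2}{u}}\right)\dif{u} \leq \frac{\log{x}}{8\pi x} + \frac{3\log{x}+5}{16\pi x} = \frac{5(\log{x}+1)}{16\pi x}.
\]

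To finish I would insert the substitution~\eqref{eq:xy}. Then $\sqrt{x}=e^{-\alpha/(2\sigma-1)}\log{\tau}$, and rewriting~\eqref{eq:eta} produces the key identity
\[
\log{x} = 2\log\log{\tau} - \frac{2\alpha}{2\sigma-1} = \frac{\log\log{\tau}}{\eta(\alpha,\sigma,\tau)}.
\]
With this, $2/(\sqrt{x}\log{x})$ becomes $2\eta(\alpha,\sigma,\tau)\exp(\alpha/(2\sigma-1))/(\log{\tau}\log\log{\tau})$ and $5(\log{x}+1)/(16\pi x)$ becomes $5\exp(2\alpha/(2\sigma-1))(1+\log\log{\tau}/\eta(\alpha,\sigma,\tau))/(16\pi\log^{2}{\tau})$, i.e., exactly the two summands of~\eqref{eq:Ex}. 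I expect the main (and really the only) obstacle to be recognising this $\eta$-identity, which is what makes the two apparently independent terms of~\eqref{eq:Ex} appear naturally; everything else is routine calculus together with the explicit Schoenfeld input.
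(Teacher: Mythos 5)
Your argument is the same as the paper's: reduce to the Riemann zeta-function via $|\Lambda_{\cL}(n)|\leq m\Lambda(n)$, apply partial summation against $\psi(u)$, insert Schoenfeld's RH bound $|\psi(u)-u|\leq\frac{1}{8\pi}\sqrt{u}\log^{2}{u}$, and translate via~\eqref{eq:xy} using the identity $\log{x}=2\log{\log{\tau}}-\tfrac{2\alpha}{2\sigma-1}=\log{\log{\tau}}/\eta(\alpha,\sigma,\tau)$. All of your computations check out: the deterministic part of the partial-summation formula collapses to $\tfrac{2}{\sqrt{x}\log{x}}$ minus a non-negative integral, and the error contributions sum to $\tfrac{5(1+\log{x})}{16\pi x}$, exactly as in the paper.

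The one gap is your restriction to $x\geq 73.2$. The lemma carries no largeness hypothesis on $\tau$, and it is invoked in the fully explicit Theorem~\ref{thm:MainExplicit}, where under the conditions of case~(1) one only has $x=y^{-1}\log^{2}{\tau}\geq(\log{\tau})^{2-\alpha/\alpha_1}>2$; so $x$ can genuinely lie in $[2,74)$ and Schoenfeld's inequality is not available on all of $[x,\infty)$ there. The paper closes this case by writing $\sum_{n>x}=\sum_{x<n\leq 74}+\sum_{n>74}$, bounding the tail crudely by $0.05$ plus the finite sum, and verifying numerically that the resulting quantity is still at most $\tfrac{2}{\sqrt{x}\log{x}}+\tfrac{5(1+\log{x})}{16\pi x}$ for $2\leq x\leq 74$. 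You should add such a finite check (or an explicit hypothesis guaranteeing $x\geq 74$) to make the proof match the stated generality of the lemma.
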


\begin{proof}
If $x \geq 74$, then by using~\cite[Equation 6.2]{SchoenfeldSharperRH}, i.e., $|\psi(u)-u|\leq \frac{1}{8\pi}\sqrt{u}\log^{2}{u}$ for $u\geq 74$, we have
\begin{flalign*}
\frac{1}{m}\mathcal{E}_{x} &\leq \sum_{n>x}\frac{\Lambda(n)}{n^{\frac{3}{2}}\log{n}}
= \int_{x}^{\infty}\frac{\dif{u}}{u^{\frac{3}{2}}\log{u}} - \frac{\psi(x)-x}{x^{\frac{3}{2}}\log{x}}
+\frac{1}{2}\int_{x}^{\infty}\frac{\left(2+3\log{u}\right)\left(\psi(u)-u\right)}{u^{\frac{5}{2}}\log^{2}{u}}\dif{u} \nonumber \\
&\leq \frac{2}{\sqrt{x}\log{x}} + \frac{5\left(1+\log{x}\right)}{16\pi x}
= \frac{2\eta\left(\alpha,\sigma,\tau\right)\exp{\left(\frac{\alpha}{2\sigma-1}\right)}}{\log{\tau}\log{\log{\tau}}}
+ \frac{5\exp{\left(\frac{2\alpha}{2\sigma-1}\right)}\left(1+\frac{\log{\log{\tau}}}{\eta\left(\alpha,\sigma,\tau\right)}\right)}{16\pi\log^{2}{\tau}}.
\end{flalign*}
Further, if $2\leq x<74$, then we can estimate
\begin{multline*}
\frac{1}{m}\mathcal{E}_{x} \leq \sum_{n>x}\frac{\Lambda(n)}{n^{\frac{3}{2}}\log{n}} = \left(\sum_{x<n\leq 74}+\sum_{n>74}\right)\frac{\Lambda(n)}{n^{\frac{3}{2}}\log{n}} < \left(\sum_{x<n\leq 74}+\sum_{74<n\leq 10^5}\right)\frac{\Lambda(n)}{n^{\frac{3}{2}}\log{n}}+\sum_{n>10^{5}}\frac{1}{n^{\frac{3}{2}}} \\
\leq 0.05 +\sum_{x<n\leq74}\frac{\Lambda(n)}{n^{\frac{3}{2}}\log{n}} \leq \frac{2}{\sqrt{x}\log{x}} + \frac{5\left(1+\log{x}\right)}{16\pi x},
\end{multline*}
with the last inequality directly verified for $2\leq x\leq 74$ by computer.
\end{proof}

Next we estimate $S_{\cL,x,y}(\sigma)$ when $\cL\in\mathcal{SP}$. 

\begin{lemma}
\label{sec:SforSP}
Assume $\cL\in\mathcal{SP}$. Then 
\begin{multline}
\label{eq:BoundForS}
\frac{1}{m}S_{\cL,x,y}(\sigma) \leq A(1,\alpha,\sigma,\sigma)\left(\log{\tau}\right)^{2-2\sigma}-\dfrac{\sigma2^{1-\sigma}}{1-\sigma} \\
+ \frac{\sigma 2^{\frac{3}{2}-\sigma}\left(4+\left(2+(2\sigma-1)\log{2}\right)^{2}\right)}{8\pi(2\sigma-1)^{3}}
+ \left(\frac{\left(2\left(\alpha+1\right)\sigma-1\right)\left(e^{\alpha}-1\right)}{2\pi\alpha}\right)
\dfrac{(\log\log{\tau})^2}{(2\sigma-1)(\log{\tau})^{2\sigma-1}}
\end{multline}
for the variables $\alpha$, $\sigma\neq 1$ and $\tau$, which satisfy conditions from the cases~(1) or~(2) of Theorem~\ref{thm:MainExplicit}, and $A\left(a,\alpha,u,\sigma\right)$ is defined by~\eqref{eq:A}. Further, for $\sigma$ from the range~\eqref{eq:sigmaRange2} we have
\begin{multline}
\label{eq:BoundForSAround1}
\frac{1}{m}S_{\cL,x,y}(\sigma) \leq 2\log{\log{\tau}} + 1 - \sigma\log{2} + 4\theta_1(M)|1-\sigma|\left(\log{\log{\tau}}\right)^{2} + \left(\log{2}\right)^{2}\theta_1\left(\frac{M\log{2}}{2\log{\log{\tau_0}}}\right)\sigma|1-\sigma| \\
+ \frac{\sigma 2^{\frac{3}{2}-\sigma}\left(4+\left(2+(2\sigma-1)\log{2}\right)^{2}\right)}{8\pi(2\sigma-1)^{3}} +
\left(\frac{\left(2\left(\alpha+1\right)\sigma-1\right)\left(e^{\alpha}-1\right)}{2\pi\alpha}\right)
\dfrac{(\log\log{\tau})^2}{(2\sigma-1)(\log{\tau})^{2\sigma-1}},
\end{multline}
where $M$ and $\theta_{1}(u)$ are from~\eqref{eq:thetas}. Also,
\begin{equation}
\label{eq:BoundForS1}
\frac{1}{m}S_{\cL,x,y}(1) \leq 2\log{\log{\tau}} - \gamma - \alpha + \frac{\left(e^{\alpha}-1\right)(2\alpha+1)\left(\log{\log{\tau}}\right)^{2}}{2\pi\alpha\log{\tau}} + \frac{0.24e^{\alpha}}{\log{\tau}}.
\end{equation}
\end{lemma}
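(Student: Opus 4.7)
Because $\cL\in\cSP$, inequality~\eqref{eq:BoundOnLambda} yields $|\Lambda_{\cL}(n)|\le m\Lambda(n)$, so from~\eqref{eq:S} the three claimed bounds reduce to estimating $S_{\zeta,x,y}(\sigma)$ and multiplying by $m$. With the choices in~\eqref{eq:xy} one has $xy=\log^{2}\tau$, $\log y=2\alpha/(2\sigma-1)$, and $x^{1-\sigma}=(\log\tau)^{2-2\sigma}\exp(-2\alpha(1-\sigma)/(2\sigma-1))$. I would decompose $\psi(u)=u+E(u)$, evaluate the $u$-part exactly, and control the $E$-part using the Schoenfeld bound $|E(u)|\le\sqrt{u}\log^{2}u/(8\pi)$ valid for $u\ge 74$ under RH, with the range $u\in[2,74]$ handled by direct numerical computation and absorbed into the explicit constants.

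\textbf{Deriving \eqref{eq:BoundForS}.} Abel summation applied separately to the two pieces of $S_{\zeta,x,y}(\sigma)$ produces a main term plus two remainders. For the main term the key manipulation is the substitution $v=\log(xy/u)$ in the trapezoidal integral, reducing it to $\int_{0}^{\log y}v\,e^{-(1-\sigma)v}\,dv=(1-y^{\sigma-1}(1+(1-\sigma)\log y))/(1-\sigma)^{2}$. Combined with the unweighted integral over $[2,x]$, the cross-terms collapse and the sum evaluates to exactly $A(1,\alpha,\sigma,\sigma)(\log\tau)^{2-2\sigma}-\sigma 2^{1-\sigma}/(1-\sigma)$ via the definition~\eqref{eq:A}. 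For the $E$-contribution, the piece $\sigma\int_{2}^{x}|E(u)|u^{-\sigma-1}\,du$ is bounded by $(\sigma/(8\pi))\int_{2}^{\infty}u^{-\sigma-1/2}\log^{2}u\,du$; using the antiderivative $(u^{w}/w^{3})(w^{2}\log^{2}u-2w\log u+2)$ with $w=1/2-\sigma$ this produces exactly the first constant in the error of~\eqref{eq:BoundForS}. The second remainder $(\log y)^{-1}\int_{x}^{xy}|E(u)|(1+\sigma\log(xy/u))u^{-\sigma-1}\,du$ I would control by $\log u\le\log(xy)=2\log\log\tau$ and by recomputing the resulting $u^{-\sigma-1/2}$ integral via the same $v$-substitution; the elementary inequality $e^{\alpha}\ge\alpha+1$ upgrades the sharper expression $2\sigma\alpha e^{\alpha}-e^{\alpha}+1$ to the cleaner target factor $(2(\alpha+1)\sigma-1)(e^{\alpha}-1)$.

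\textbf{Deriving \eqref{eq:BoundForSAround1} and \eqref{eq:BoundForS1}.} I would obtain~\eqref{eq:BoundForSAround1} from~\eqref{eq:BoundForS} by Taylor-expanding the main terms about $\sigma=1$. Writing $(\log\tau)^{2-2\sigma}=e^{2(1-\sigma)\log\log\tau}$, expressing $1-e^{-z}=z-z^{2}\theta_{1}(z)$ (cf.~\eqref{eq:thetas}) with $z=2\alpha(1-\sigma)/(2\sigma-1)$, and expanding $2^{1-\sigma}=2\bigl(1-(1-\sigma)\log 2+((1-\sigma)\log 2)^{2}\theta_{1}((1-\sigma)\log 2)\bigr)$, the hypothesis $|1-\sigma|\log\log\tau\le M/2$ keeps all arguments bounded and monotonicity of $\theta_{1}$ on $(0,\infty)$ (which follows from $(u-2)e^{u}+u+2>0$ for $u>0$) lets us bound each inner $\theta_{1}(z)$ by a single dominating $\theta_{1}(M)$, so the remainders collect into the three $\theta_{1}$ terms displayed. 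For~\eqref{eq:BoundForS1} I would instead work directly at $\sigma=1$: the unweighted piece satisfies $\sum_{n\le x}\Lambda(n)/n=\log x-\gamma+E(x)/x-\int_{x}^{\infty}E(u)u^{-2}\,du$, the constant $-\gamma$ coming from the Laurent expansion of $\zeta'/\zeta$ at $s=1$ via $\int_{1}^{\infty}E(u)u^{-s-1}\,du=-s^{-1}(\zeta'/\zeta)(s)-(s-1)^{-1}\to-1-\gamma$, and the trapezoidal piece contributes $(\log y)/2$ through $\int_{x}^{xy}\log(xy/u)/u\,du=(\log y)^{2}/2$. Adding yields the main term $\log x+(\log y)/2-\gamma=2\log\log\tau-\alpha-\gamma$, and Schoenfeld's bound applied to $\int_{x}^{\infty}u^{-3/2}\log^{2}u\,du=(2\log^{2}x+8\log x+16)/\sqrt{x}$ and its analogue on $[x,xy]$ produces the explicit $(e^{\alpha}-1)(2\alpha+1)(\log\log\tau)^{2}/(2\pi\alpha\log\tau)$ and $0.24\,e^{\alpha}/\log\tau$ error constants.

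\textbf{Main obstacle.} The hard part is matching the numerical constants precisely, most acutely in~\eqref{eq:BoundForSAround1}: Taylor remainders arise at three distinct arguments ($2(1-\sigma)\log\log\tau$, $2\alpha(1-\sigma)/(2\sigma-1)$, and $(1-\sigma)\log 2$) and must be organised so that the final expression carries exactly the three stated $\theta_{1}(\cdot)$ coefficients. The monotonicity bound $\theta_{1}(z)\le\theta_{1}(M)$ together with careful separation of the linear-in-$(1-\sigma)$ parts from the quadratic remainders should make the bookkeeping tractable, but it is where the bulk of the routine work will sit.
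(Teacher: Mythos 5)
Your route to \eqref{eq:BoundForS} and \eqref{eq:BoundForS1} is essentially the paper's: reduce to $m\Lambda(n)$ via \eqref{eq:BoundOnLambda}, apply Abel summation with $\psi(u)=u+E(u)$, evaluate the main terms exactly (your $v$-substitution reproduces the cancellation yielding $A(1,\alpha,\sigma,\sigma)(\log\tau)^{2-2\sigma}-\sigma2^{1-\sigma}/(1-\sigma)$), and control $E$ with Schoenfeld's bound. Your constants check out; your exact evaluation of the $[x,xy]$ remainder (the expression $2\sigma\alpha e^{\alpha}-e^{\alpha}+1$, upgraded via $e^{\alpha}\ge\alpha+1$) is in fact slightly sharper than the paper's cruder step $\log(xy/u)\le\log y$, which lands directly on the stated factor $(2(\alpha+1)\sigma-1)(e^{\alpha}-1)$. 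For \eqref{eq:BoundForS1} the paper simply cites~\cite[Eq.~(3.6)]{ChirreSimonicHagen}, whereas you rederive it; your identity $\sum_{n\le x}\Lambda(n)/n=\log x-\gamma+E(x)/x-\int_x^\infty E(u)u^{-2}\dif{u}$ and the $(\log y)/2$ contribution of the trapezoidal piece are correct.

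The one genuine gap is in \eqref{eq:BoundForSAround1}. You plan to Taylor-expand all three exponentials --- $(\log\tau)^{2-2\sigma}$, the factor $1-e^{-z}$ with $z=2\alpha(1-\sigma)/(2\sigma-1)$ inside $A$, and $2^{1-\sigma}$ --- and to organise the bookkeeping so that ``the final expression carries exactly the three stated $\theta_1(\cdot)$ coefficients''. But the target carries only \emph{two} $\theta_1$ terms. If you bound the remainder coming from $1-e^{-z}$ in absolute value by $\theta_1(\cdot)z^2$, you pick up an extra additive term of size roughly $\frac{2\alpha}{2\sigma-1}\theta_1(\cdot)(\log\tau)^{2-2\sigma}=O(1)$ that has no home in \eqref{eq:BoundForSAround1}, so you would prove a strictly weaker inequality. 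The fix (this is the paper's step \eqref{eq:ASecondEst0}) is to notice that this particular deviation has a favorable sign: writing $A(1,\alpha,\sigma,\sigma)(\log\tau)^{2-2\sigma}=\frac{(\log\tau)^{2-2\sigma}}{1-\sigma}-\frac{(e^{-z}-1+z)(\log\tau)^{2-2\sigma}}{z(1-\sigma)}$ and noting that $z(1-\sigma)=2\alpha(1-\sigma)^2/(2\sigma-1)>0$ while $e^{-z}-1+z\ge0$ for all real $z$, the second piece is $\le 0$ and can simply be dropped. Only $(\log\tau)^{2-2\sigma}$ and $2^{1-\sigma}$ then need $\theta_1$-remainders, giving exactly the two displayed terms. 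Two smaller points: in $1-e^{-z}=z-z^{2}\theta_1(-z)$ the argument of $\theta_1$ is $-z$, not $z$ (harmless, since $\theta_1$ is increasing on all of $\R$, so $\theta_1(\pm z)\le\theta_1(|z|_{\max})$); and \eqref{eq:BoundForSAround1} must also be verified at $\sigma=1$, where \eqref{eq:BoundForS} is unavailable --- there it follows from \eqref{eq:BoundForS1}, as the paper notes.
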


\begin{proof}
We know that if $\cL\in\mathcal{SP}$, then inequality~\eqref{eq:BoundOnLambda} holds and thus by integration by parts
\begin{flalign*}
\frac{1}{m}S_{\cL,x,y}(\sigma) &\leq \sum_{n\leq x}\frac{\Lambda(n)}{n^{\sigma}} + \frac{1}{\log{y}}\sum_{x<n\leq xy}\frac{\Lambda(n)\log{\frac{xy}{n}}}{n^{\sigma}}=\frac{x^{1-\sigma}-\sigma2^{1-\sigma}}{1-\sigma}-\int_{2}^x \left(\psi(u)-u\right)\left(\frac{1}{u^\sigma}\right)'\dif{u} \\
&+\frac{(xy)^{1-\sigma}}{\log y}\int_{1}^y \frac{\log u}{u^{2-\sigma}} \dif{u}-\frac{1}{\log y}\int_{x}^{xy} \left(\psi(u)-u\right)\left(\frac{1}{u^\sigma}\log\frac{xy}{u}\right)'\dif{u} \\
&\leq \frac{(xy)^{1-\sigma}-x^{1-\sigma}}{(1-\sigma)^{2}\log{y}} - \frac{\sigma 2^{1-\sigma}}{1-\sigma} + \frac{\sigma}{8\pi}\int_{2}^{x}\frac{\log^{2}{u}}{u^{\frac{1}{2}+\sigma}}\dif{u}
+\frac{\left(\sigma+\frac{1}{\log y}\right)\left(x^{\frac{1}{2}-\sigma}-(xy)^{\frac{1}{2}-\sigma}\right)\log^2 (xy)}{8\pi\left(\sigma-\frac{1}{2}\right)},
\end{flalign*}
see~\cite[Section~3]{ChirreSimonicHagen} for similar ideas. Taking into account also~\eqref{eq:xy}, the last inequality assures~\eqref{eq:BoundForS}.

Let us now prove \eqref{eq:BoundForS1} since we are going to use this bound to prove \eqref{eq:BoundForSAround1}. Observe that $x=e^{-2\alpha}\log^{2}{\tau}$ since $\sigma=1$. Therefore, if $\tau\geq\exp{\left(e^{\alpha}\sqrt{60}\right)}$, then $x\geq 60$ and thus~\eqref{eq:BoundForS1} holds by~\cite[Equation~(3.6)]{ChirreSimonicHagen}. 

Let us now prove \eqref{eq:BoundForSAround1}. Firstly, assume that $\sigma \neq 1$. By the first paragraph of the proof we know that~\eqref{eq:BoundForS} holds for $\sigma$ from the range~\eqref{eq:sigmaRange2} while $\sigma \neq 1$. Since $e^{u}-1-u \geq 0$ for all real numbers $u$, we have
\begin{flalign}
\label{eq:ASecondEst0}
    A(1,\alpha,\sigma,\sigma)\left(\log{\tau}\right)^{2-2\sigma}-\frac{\sigma2^{1-\sigma}}{1-\sigma} &= -(\log\tau)^{2-2\sigma}\left(\frac{2\alpha}{2\sigma-1}\right)\left(\frac{\exp\left(-\frac{2\alpha(1-\sigma)}{2\sigma-1}\right)-1-\left(-\frac{2\alpha(1-\sigma)}{2\sigma-1}\right)}{\left(-\frac{2\alpha(1-\sigma)}{2\sigma-1}\right)^2}\right) \nonumber \\
    &+\frac{(\log\tau)^{2-2\sigma}}{1-\sigma}-\frac{\sigma2^{1-\sigma}}{1-\sigma}\leq \frac{(\log\tau)^{2-2\sigma}}{1-\sigma}-\frac{\sigma2^{1-\sigma}}{1-\sigma}.
\end{flalign}
Even more, note that $\left|e^{u}-1-u\right|\leq\theta_1\left(u_0\right)u^2$ for $|u|\leq u_0$, where $\theta_1\left(u_0\right)$ is from ~\eqref{eq:thetas}. This implies 
\begin{gather*}
\left|\left(\log{\tau}\right)^{2-2\sigma} - \left(1+ 2(1-\sigma)\log{\log{\tau}}\right)\right| \leq 4\theta_{1}(M)(1-\sigma)^{2}\left(\log{\log{\tau}}\right)^{2}, \\
\left|\frac{\sigma 2^{1-\sigma}}{1-\sigma} - \left(\frac{\sigma}{1-\sigma} + \sigma\log{2}\right)\right| \leq \left(\log{2}\right)^{2}\theta_1\left(\frac{M\log{2}}{2\log{\log{\tau_0}}}\right)\sigma|1-\sigma|,
\end{gather*}
since $|1-\sigma|\leq M/\left(2\log{\log{\tau}}\right)$ with $M$ from~\eqref{eq:thetas}. Therefore, by the previous two inequalities and~\eqref{eq:ASecondEst0} we have
\begin{flalign*}
A(1,\alpha,\sigma,\sigma)\left(\log{\tau}\right)^{2-2\sigma}-\frac{\sigma2^{1-\sigma}}{1-\sigma} &\leq 2\log\log\tau+1-\sigma\log 2 \\
&+4\theta_{1}(M)|1-\sigma|\left(\log{\log{\tau}}\right)^{2}+\left(\log{2}\right)^{2}\theta_1\left(\frac{M\log{2}}{2\log{\log{\tau_0}}}\right)\sigma|1-\sigma|,
\end{flalign*}
which gives~\eqref{eq:BoundForSAround1} for $\sigma\neq 1$ from the range~\eqref{eq:sigmaRange2}. However, estimate~\eqref{eq:BoundForSAround1} holds also for $\sigma=1$ by~\eqref{eq:BoundForS1}. The proof of Lemma~\ref{sec:SforSP} is thus complete.
\end{proof}

Finally, we estimate $\widehat{S}_{\cL,x,y}(\sigma)$ when $\cL\in\mathcal{SP}$. The method we are using here is very similar to that from the proof of Lemma~\ref{sec:SforSP}. 

\begin{lemma}
Assume $\cL\in\mathcal{SP}$, $\nu_1>0$ and $\nu_2>1$, and let $\theta_1(u)$ be defined as in~\eqref{eq:thetas}. Then, under the assumptions from the case (1) of Theorem~\ref{thm:MainExplicit}, we have
\begin{flalign}
\label{eq:SHat1}
\frac{1}{m}\widehat{S}_{\cL,x,y}(\sigma) &\leq \eta(\alpha,\sigma,\tau)
\left(A(1,\alpha,\sigma,\sigma)\frac{\left(\log{\tau}\right)^{2-2\sigma}}{\log{\log{\tau}}}-\frac{1}{(1-\sigma)\log{\log{\tau}}}\right) \nonumber \\
&+\left(\left(\nu_{2}\eta(\alpha,\sigma,\tau)\right)^{2}\exp{\left(-\frac{2\alpha(1-\sigma)}{2\sigma-1}\right)}\right)
\frac{\left(\log{\tau}\right)^{2-2\sigma}}{(1-\sigma)^{2}\left(\log{\log{\tau}}\right)^{2}} \nonumber \\
&+\frac{1}{\nu_{1}^{2}}\exp{\left(-\frac{2\alpha(1-\sigma)}{(2\sigma-1)\nu_{2}}\right)}\left(\log{\tau}\right)^{\frac{2}{\nu_{2}}(1-\sigma)} + \log{\left(2\log{\log{\tau}}\right)}
+ \frac{1-\sigma 2^{1-\sigma}}{(1-\sigma)\log{2}} - \log{\log{2}} \nonumber \\
&+ \int_{0}^{\nu_1}\theta_{1}(u)\dif{u}
+ \frac{2^{\frac{3}{2}-\sigma}\left(-1+\sigma\left(4-\log{2}\right)+\sigma^2\log{4}\right)}{8\pi\left(2\sigma-1\right)^{2}}
+ \frac{\left(e^{\alpha}-1\right)\log{\log{\tau}}}{4\pi\alpha\left(\log{\tau}\right)^{2\sigma-1}},
\end{flalign}
where $A\left(a,\alpha,u,\sigma\right)$ and $\eta(\alpha,\sigma,\tau)$ are defined by~\eqref{eq:A} and~\eqref{eq:eta}, respectively. 

Further, under the assumptions from the case (2) of Theorem \ref{thm:MainExplicit} and assuming that $\sigma \neq 1$, we have
\begin{flalign}
\label{eq:SHat2}
\frac{1}{m}\widehat{S}_{\cL,x,y}(\sigma) &\leq \log{\left(2\log{\log{\tau}}\right)} + 2\left(2\eta(\alpha,\sigma,\tau)\theta_1(M)+\theta_1(2\alpha_2)\right)|1-\sigma|\log{\log{\tau}} \nonumber \\
&+ \frac{1}{\log{2}} - \log{\log{2}} - \sigma + \left(\log{2}\right)\theta_1\left(\frac{M\log{2}}{2\log{\log{\tau_0}}}\right)\sigma|1-\sigma|+ 2\eta(\alpha,\sigma,\tau) \nonumber \\
&+ \frac{2^{\frac{3}{2}-\sigma}\left(-1+\sigma\left(4-\log{2}\right)+\sigma^2\log{4}\right)}{8\pi\left(2\sigma-1\right)^{2}}
+ \frac{\left(e^{\alpha}-1\right)\log{\log{\tau}}}{4\pi\alpha\left(\log{\tau}\right)^{2\sigma-1}},
\end{flalign}
where $M=2\max\left\{\alpha_2,\alpha_3\right\}$.

Finally, if $\sigma=1$, then
\begin{flalign}
\label{eq:SHatOn1}
\frac{1}{m}\widehat{S}_{\cL,x,y}(1) &\leq \log{\left(2\log{\log{\tau}}\right)} + \gamma + \left(1-\frac{\log{\log{\tau}}}{\alpha}\right)\log{\left(1-\frac{\alpha}{\log{\log{\tau}}}\right)} - 1 \nonumber \\
&+ \frac{\left(1+e^{\alpha}\left(4\alpha-1\right)\right)\log{\log{\tau}}+e^{\alpha}\left(7\alpha-4\left(\alpha^2+1\right)\right)+4}{4\pi\alpha\log{\tau}}.
\end{flalign}
\end{lemma}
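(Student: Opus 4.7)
The starting point is \eqref{eq:BoundOnLambda}, which yields $\widehat{S}_{\cL,x,y}(\sigma)/m \leq \sum_{n\leq x}\Lambda(n)/(n^{\sigma}\log{n}) + (\log{y})^{-1}\sum_{x<n\leq xy}\Lambda(n)\log(xy/n)/(n^{\sigma}\log{n})$. The plan is to mirror the proof of Lemma~\ref{sec:SforSP}: apply Abel summation against the Chebyshev function $\psi$, split $\psi(u)=u+(\psi(u)-u)$, and treat the two pieces separately. The main contribution, coming from $\psi(u)\mapsto u$, reduces to $\int du/(u^{\sigma}\log{u})$ and its $\log(xy/u)$-weighted analogue, evaluated by the identity \eqref{eq:integral2}; the error contribution, from $\psi(u)-u$, is controlled through the RH bound $|\psi(u)-u|\leq(8\pi)^{-1}\sqrt{u}\log^{2}{u}$ valid for $u\geq 74$, with a direct numerical check below this threshold, exactly as in the proof of Lemma~\ref{sec:Ex}.

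For \eqref{eq:SHat1}, after specialising $x,y$ through \eqref{eq:xy}, the main term is recognised as a multiple of $A(1,\alpha,\sigma,\sigma)(\log\tau)^{2-2\sigma}$ using $(xy)^{1-\sigma}-x^{1-\sigma}=(1-e^{-2\alpha(1-\sigma)/(2\sigma-1)})(\log\tau)^{2-2\sigma}$ and $\log y=2\alpha/(2\sigma-1)$; the factor $\log(xy)/\log x = 2\eta(\alpha,\sigma,\tau)$ explains the $\eta(\alpha,\sigma,\tau)$ prefactor on the right-hand side. The $(\nu_{1},\nu_{2})$-dependent piece arises from the three-piece decomposition \eqref{eq:Case2} applied to the residual $\int_{(1-\sigma)\log 2}^{(1-\sigma)\log x}\theta_{1}(u)\,du$ produced by \eqref{eq:integral2}, and the Abel boundary contributions at $u=2$ supply the $1/\log 2-\log\log 2$ and $-\sigma 2^{1-\sigma}/((1-\sigma)\log 2)$ summands. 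For \eqref{eq:SHat2}, one takes \eqref{eq:SHat1} as input and linearises in $1-\sigma$: since $|1-\sigma|\leq M/(2\log\log\tau)$, the inequality $|e^{u}-1-u|\leq\theta_{1}(u_{0})u^{2}$ applied to the exponents $2(1-\sigma)\log\log\tau$, $(1-\sigma)\log 2$ and $\alpha(1-\sigma)/((2\sigma-1)\log\log\tau)$ converts $(\log\tau)^{2-2\sigma}$, $2^{1-\sigma}$ and $\eta(\alpha,\sigma,\tau)$ into their first-order expansions with $\theta_{1}$-controlled remainders, paralleling the passage from \eqref{eq:BoundForS} to \eqref{eq:BoundForSAround1}. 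For \eqref{eq:SHatOn1}, setting $\sigma=1$ makes all integrals explicit: $\int_{2}^{x}du/(u\log u)=\log\log x-\log\log 2$ and the weighted second integral reduces by integration by parts to an explicit combination of $\log\log(xy)$, $\log\log x$ and $(\log y)/\log x$; substituting $x=e^{-2\alpha}\log^{2}\tau$, $y=e^{2\alpha}$ and collecting terms yields $\log\log(2\log\log\tau)+\gamma+(1-\log\log\tau/\alpha)\log(1-\alpha/\log\log\tau)-1$, with the $\gamma$ produced by the Euler-type identity emerging at the $u=2$ endpoint.

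The main obstacle will be the explicit evaluation of the RH error integrals $\int u^{-1/2-\sigma}\log^{2}{u}\,du$ and their $\log(xy/u)$-weighted counterparts, together with matching the emerging constants against the coefficients $2^{3/2-\sigma}(-1+\sigma(4-\log 2)+\sigma^{2}\log 4)/(8\pi(2\sigma-1)^{2})$ and $(e^{\alpha}-1)\log\log\tau/(4\pi\alpha(\log\tau)^{2\sigma-1})$ in \eqref{eq:SHat1} and their $\sigma=1$ specialisations in \eqref{eq:SHatOn1}. Careful tracking of all Abel boundary contributions at $u=2$ and $u=x$ will be essential to hit the stated constants exactly, since even sign errors in the $1-\sigma 2^{1-\sigma}$ boundary piece would propagate into the $\eta$-weighted main term.
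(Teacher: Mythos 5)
Your treatment of \eqref{eq:SHat1} matches the paper's route: partial summation against $\psi$, the split $\psi(u)=u+(\psi(u)-u)$ with the RH bound $|\psi(u)-u|\leq(8\pi)^{-1}\sqrt{u}\log^{2}u$, the identity \eqref{eq:integral2} for $\int_{2}^{x}\dif{u}/(u^{\sigma}\log{u})$, and the three-piece decomposition \eqref{eq:Case2} producing the $(\nu_1,\nu_2)$-terms; your identification of $1/\log{x}=\eta(\alpha,\sigma,\tau)/\log\log\tau$ is also correct. For \eqref{eq:SHat2} the paper does not linearise \eqref{eq:SHat1} itself (whose $\nu_1,\nu_2$-terms do not reduce to the stated form) but returns to the intermediate inequality \eqref{eq:SHatxy} and replaces \eqref{eq:Case2} by the one-line bound \eqref{eq:Case1}, $\int_{(1-\sigma)\log 2}^{(1-\sigma)\log x}\theta_1(u)\dif{u}\leq\theta_1(2\alpha_2)|1-\sigma|\log{x}$, which is what produces the $2\theta_1(2\alpha_2)|1-\sigma|\log\log\tau$ summand and remains valid for $\sigma>1$ where the integral is negative; your $\theta_1$-linearisation of $(\log\tau)^{2-2\sigma}$, $2^{1-\sigma}$ and $\eta$ is otherwise the right mechanism.

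The genuine gap is the case $\sigma=1$. Your proposed derivation via $\int_{2}^{x}\dif{u}/(u\log{u})=\log\log{x}-\log\log{2}$ together with the Abel boundary term at $u=2$ yields a constant of order $1/\log{2}-\log\log{2}-1\approx 0.81$, not $\gamma-1\approx-0.42$; the Euler--Mascheroni constant cannot emerge from the $u=2$ endpoint of that integral. The paper instead abandons the generic partial-summation bound for the first sum and uses
\[
\sum_{n\leq x}\frac{\Lambda(n)}{n\log{n}} \leq \sum_{p\leq x}\frac{1}{p} - \sum_{p}\left(\frac{1}{p}+\log\left(1-\frac{1}{p}\right)\right),
\]
after which $\gamma$ arises from Mertens' third theorem in explicit RH-conditional form (via Rosser--Schoenfeld and Schoenfeld, plus a numerical check for $60\leq x\leq 599$), giving \eqref{eq:SHatAux1On1}; the second sum is then handled by \eqref{eq:SHatAux2On1}, whose leading piece $-(\log(xy)/\log{y})\log(1-\log{y}/\log(xy))$ is exactly the $(1-\log\log\tau/\alpha)\log(1-\alpha/\log\log\tau)$ term. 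Without this switch to the prime-sum identity your argument cannot reach the stated constant in \eqref{eq:SHatOn1}.
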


\begin{proof}
Let $\sigma\in[0,\infty)\setminus\{1\}$, $x\geq 2$ and $y\geq 2$. By partial summation we have
\begin{equation}
\label{eq:SHatAux1}
\sum_{n\leq x}\frac{\Lambda(n)}{n^{\sigma}\log{n}} \leq \int_{2}^{x}\frac{\dif{u}}{u^{\sigma}\log{u}} + \frac{\psi(x)-x}{x^{\sigma}\log{x}} + \frac{2^{1-\sigma}}{\log{2}} + \frac{1}{8\pi}\int_{2}^{x}\frac{1+\sigma\log{u}}{u^{\frac{1}{2}+\sigma}}\dif{u}
\end{equation}
and
\begin{flalign}
\frac{1}{\log{y}}&\sum_{x<n\leq xy}\frac{\Lambda(n)\log{\frac{xy}{n}}}{n^{\sigma}\log{n}} \leq \frac{(xy)^{1-\sigma}}{\log{y}}\int_{1}^{y}\frac{\log{u}\;\dif{u}}{u^{2-\sigma}\log{\frac{xy}{u}}} - \frac{\psi(x)-x}{x^{\sigma}\log{x}} \nonumber \\
&+\frac{1}{8\pi\log{y}}\int_{x}^{xy}\frac{\left(1+\sigma\log{u}\right)\log{\frac{xy}{u}}+\log{u}}{u^{\frac{1}{2}+\sigma}}\dif{u} \label{eq:SHatAux2(a)} \\
&\leq \frac{(xy)^{1-\sigma}}{\log{x}\log{y}}\int_{1}^{y}\frac{\log{u}}{u^{2-\sigma}}\dif{u} - \frac{\psi(x)-x}{x^{\sigma}\log{x}}
+\frac{1}{8\pi}\int_{x}^{xy}\frac{1+\sigma\log{u}}{u^{\frac{1}{2}+\sigma}}\dif{u}
+\frac{1}{8\pi\log{y}}\int_{x}^{xy}\frac{\log{u}}{u^{\frac{1}{2}+\sigma}}\dif{u}, \label{eq:SHatAux2}
\end{flalign}
see~\cite[Section~3]{ChirreSimonicHagen} for similar ideas. We know that if $\cL\in\mathcal{SP}$, then inequality~\eqref{eq:BoundOnLambda} holds. Therefore, combining~\eqref{eq:SHatAux1} and~\eqref{eq:SHatAux2} gives
\begin{multline}
\label{eq:SHatxy}
\frac{1}{m}\widehat{S}_{\cL,x,y}(\sigma) \leq \frac{(xy)^{1-\sigma}-x^{1-\sigma}}{(1-\sigma)^{2}\log{x}\log{y}}-\frac{x^{1-\sigma}}{(1-\sigma)\log{x}}
+\int_{2}^{x}\frac{\dif{u}}{u^{\sigma}\log{u}} \\
+ \frac{2^{1-\sigma}}{\log{2}} + \frac{1}{8\pi}\int_{2}^{xy}\frac{1+\sigma\log{u}}{u^{\frac{1}{2}+\sigma}}\dif{u} + \frac{1}{8\pi\log{y}}\int_{x}^{xy}\frac{\log{u}}{u^{\frac{1}{2}+\sigma}}\dif{u}.
\end{multline}
The strategy to bound~\eqref{eq:SHatxy} is now very similar to that in the proof of Lemma~\ref{sec:SforSP}, but with an exemption of the first integral in~\eqref{eq:SHatxy}. We can bound the integral over $1/(u^\sigma\log{u})$ similarly as in \eqref{eq:integral2} and \eqref{eq:Case2}. Consider now the assumptions from the case (1) of Theorem~\ref{thm:MainExplicit}. Then~\eqref{eq:SHatxy},~\eqref{eq:integral2} and~\eqref{eq:Case2} imply inequality \eqref{eq:SHat1}.

For $\sigma\geq 1-\alpha_2/\log{\log{\tau}}$, we also have
\begin{equation}
\label{eq:Case1}
\int_{(1-\sigma)\log{2}}^{(1-\sigma)\log{x}}\theta_1(u)\dif{u} \leq \theta_{1}(2\alpha_2)|1-\sigma|\log{x}
\end{equation}
since the integral in~\eqref{eq:Case1} is negative for $\sigma>1$ and $(1-\sigma)\log{x}\leq 2\alpha_2$. 
Consider the assumptions from the case (2) of Theorem~\ref{thm:MainExplicit} and assume $\sigma \neq 1$. Then similarly as in the proof of Lemma~\ref{sec:SforSP},~\eqref{eq:SHatxy},~\eqref{eq:integral2} and~\eqref{eq:Case1} imply \eqref{eq:SHat2}.

As in the proof of Lemma~\ref{sec:SforSP}, we can estimate $\widehat{S}_{\cL,x,y}(1)$ even better. We have
\begin{flalign}
\label{eq:SHatAux1On1}
\sum_{n\leq x}\frac{\Lambda(n)}{n\log{n}} \leq \sum_{p\leq x}\frac{1}{p} + \sum_{p}\sum_{k=2}^{\infty}\frac{1}{kp^k} &= \sum_{p\leq x}\frac{1}{p} - \sum_{p}\left(\frac{1}{p}+\log{\left(1-\frac{1}{p}\right)}\right) \nonumber \\
&\leq \log{\log{x}} + \gamma + \frac{\psi(x)-x}{x\log{x}} + \frac{3+\log{x}}{4\pi\sqrt{x}}
\end{flalign}
for $x\geq 60$. The last inequality follows for $x\geq599$ by~\cite[Equations 2.7 and 4.15]{RosserSchoenfeld},~\cite[Equation 6.3]{SchoenfeldSharperRH} and $\vartheta(x)-x\leq\psi(x)-x$, see also the proof of~\cite[Corollary 2]{SchoenfeldSharperRH}. We can verify by computer that it holds also for $60\leq x\leq 599$. Next, note that~\eqref{eq:SHatAux2(a)} holds also for $\sigma=1$. Therefore,
\begin{flalign}
\label{eq:SHatAux2On1}
\frac{1}{\log{y}}\sum_{x<n\leq xy}\frac{\Lambda(n)\log{\frac{xy}{n}}}{n\log{n}} &\leq -\frac{\log{(xy)}}{\log{y}}\log{\left(1-\frac{\log{y}}{\log{(xy)}}\right)} - 1 - \frac{\psi(x)-x}{x\log{x}} \nonumber \\
&+\frac{8+\log{(xy)}}{4\pi\sqrt{xy}\log{y}} + \frac{3+\log{x}}{4\pi\sqrt{x}} - \frac{8+\log{x}}{4\pi\sqrt{x}\log{y}}.
\end{flalign}
Then~\eqref{eq:xy},~\eqref{eq:SHatAux1On1} and~\eqref{eq:SHatAux2On1} imply \eqref{eq:SHatOn1}.
\end{proof}
 
\begin{remark}
\label{rmk:Sreal}
Note that in the proofs of the lemmas in this section we do not need any assumptions for $t$, the assumptions for $\sigma$ and $\tau$ are sufficient. Hence, if $s \in \mathbb{R}$, we can replace $\tau$ with $\sq$ in all of the lemmas in this section. 
\end{remark}

\section{The sum over the non-trivial zeros}
\label{sec:SumZeros}

In this section we obtain an upper estimate for the sum from~\eqref{eq:SumOverZeros}, which we obtained from the Selberg moment formula, over the non-trivial zeros of $\cL(s)$. We will do this with the help of the Guinand--Weil exact formula for functions in the Selberg class, see the next lemma.

\begin{lemma}
\label{lemma:Guinand}
Assume that $\cL\in\cS$ and $\cL(1)\neq 0$. Let $h(z)$ be a holomorphic function on a strip
\[
\left\{z \in \mathbb{C}\colon -\frac{1}{2}-\varepsilon< \Im\{z\} <\frac{1}{2}+\varepsilon\right\}
\]
for some $\varepsilon >0$, such that $h(z)\left(1+|z|\right)^{1+\delta}$ is bounded for some $\delta>0$. Then
\begin{flalign}
\label{eq:GW}
\sum_{\rho} h\left(\frac{\rho-\frac{1}{2}}{\ie}\right) &= m_{\cL}\left(h\left(\frac{1}{2\ie}\right)+h\left(-\frac{1}{2\ie}\right)\right)+\frac{\log Q}{\pi}\widehat{h}(0) \nonumber \\
&+\frac{1}{\pi}\sum_{j=1}^{f}\int_{-\infty}^{\infty}h(u)\lambda_{j}\Re \left\{\frac{\Gamma'}{\Gamma}\left(\frac{\lambda_j}{2}+\mu_j+\ie\lambda_{j}u\right)\right\}\dif{u} \nonumber \\
&- \frac{1}{2\pi}\sum_{n=2}^\infty \frac{1}{\sqrt{n}}\left(\Lambda_{\cL}(n) \widehat{h}\left(\frac{\log n}{2\pi}\right)+\overline{\Lambda_{\cL}(n)}\widehat{h}\left(-\frac{\log n}{2\pi}\right)\right),
\end{flalign}
where $\widehat{h}(\xi)=\int_{-\infty}^{\infty}h(u)e^{-2\pi\ie u\xi}\dif{u}$ denotes the Fourier transform of $h(z)$ and the sum runs over all non-trivial zeros $\rho$ of $\cL(s)$.
\end{lemma}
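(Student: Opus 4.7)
The plan is to apply the residue theorem to the completed function $\mathfrak{L}(s)=\cL(s)Q^{s}\prod_{j=1}^{f}\Gamma(\lambda_{j}s+\mu_{j})$ with the test function $g(s)\de h\!\left((s-\tfrac{1}{2})/\ie\right)$. Concretely, I would integrate $g(s)\left(\mathfrak{L}'/\mathfrak{L}\right)(s)$ around the rectangle with vertices $c\pm\ie T$ and $1-c\pm\ie T$ for some fixed $c>1$, and let $T\to\infty$ through values avoiding ordinates of zeros of $\mathfrak{L}$. Inside the rectangle $\mathfrak{L}$ has poles of order $m_{\cL}$ at $s=1$ (from $\cL$) and at $s=0$ (produced via the functional equation), while its remaining zeros are exactly the nontrivial zeros $\rho$ of $\cL$, because the Gamma poles cancel the would-be trivial zeros. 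The residue theorem thus yields the first line of~\eqref{eq:GW}, namely $\sum_{\rho}g(\rho)-m_{\cL}\bigl(g(0)+g(1)\bigr)$, after observing that $g(0)=h(-1/(2\ie))$ and $g(1)=h(1/(2\ie))$.

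The horizontal segments at height $\pm T$ are disposed of using the decay hypothesis $h(z)(1+|z|)^{1+\delta}=O(1)$ together with a standard polylogarithmic bound for $\left(\mathfrak{L}'/\mathfrak{L}\right)(\sigma\pm\ie T)$ valid along an unbounded sequence of heights; such a bound follows from the Hadamard factorization of the entire function $(s(s-1))^{m_{\cL}}\mathfrak{L}(s)$ and Stirling's formula for the Gamma factors. On the right vertical line $\Re\{s\}=c$, I would split
\[
\frac{\mathfrak{L}'}{\mathfrak{L}}(s)=\frac{\cL'}{\cL}(s)+\log Q+\sum_{j=1}^{f}\lambda_{j}\frac{\Gamma'}{\Gamma}(\lambda_{j}s+\mu_{j}),
\]
insert the absolutely convergent Dirichlet series $(\cL'/\cL)(s)=-\sum_{n\geq 2}\Lambda_{\cL}(n)n^{-s}$, and interchange summation and integration. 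Parametrizing $s=c+\ie t$ and then shifting the contour to $\Re\{s\}=1/2$ identifies each term with a Fourier transform $\widehat{h}\!\left(\log n/(2\pi)\right)$, so the prime-power contribution reads $-(2\pi)^{-1}\sum_{n}\Lambda_{\cL}(n)n^{-1/2}\widehat{h}(\log n/(2\pi))$.

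The left vertical line is converted into a right-side integral using the functional equation, which upon logarithmic differentiation gives
\[
\frac{\mathfrak{L}'}{\mathfrak{L}}(s)=-\overline{\frac{\mathfrak{L}'}{\mathfrak{L}}(1-\bar s)}.
\]
The substitution $s\mapsto 1-\bar s$ rewrites the integral along $\Re\{s\}=1-c$ as the complex conjugate of an integral along $\Re\{s\}=c$ with $g$ replaced by $\overline{g(1-\bar s)}$. Applying the same decomposition and Dirichlet-series expansion produces the conjugate coefficients $\overline{\Lambda_{\cL}(n)}$ paired with $\widehat{h}(-\log n/(2\pi))$. The two copies of $\log Q$ combine into $(\log Q/\pi)\widehat{h}(0)$ once one identifies the vertical integral of $g$ with $\pi\widehat{h}(0)$, and the two Gamma integrals merge through the identity $z+\bar z=2\Re\{z\}$ into precisely the Gamma integral in~\eqref{eq:GW}.

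The main obstacle I anticipate is the first technical step above: producing a sequence $T_{k}\to\infty$ along which the horizontal contributions vanish uniformly. This requires a clean polylogarithmic estimate for $\mathfrak{L}'/\mathfrak{L}$ at heights $T_{k}$ kept at distance $\gg 1/\log T_{k}$ from the ordinates of nontrivial zeros, obtained from the Hadamard product combined with a zero-counting estimate for the Selberg class. Once this control is in place, the remaining ingredients---termwise Fourier inversion, absolute convergence at $\Re\{s\}=c>1$, and the conjugation symmetry induced by the functional equation---are routine, and the assembly yields exactly~\eqref{eq:GW}.
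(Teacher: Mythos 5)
Your proposal follows essentially the same route as the paper (which itself adapts the proof of Lemma~5 in Finder's thesis): apply the residue theorem to $g(s)\left(\mathfrak{L}'/\mathfrak{L}\right)(s)$ with $g(s)=h\left((s-1/2)/\ie\right)$ on vertical lines flanking the critical strip, use the functional equation to fold the left line onto the right, split $\mathfrak{L}'/\mathfrak{L}$ into the Dirichlet-series, $\log Q$, and Gamma pieces, and shift to $\Re\{s\}=1/2$ to recognize the Fourier transforms. The only caveat is that your abscissa must satisfy $1<c<1+\varepsilon$ (the paper takes $1+\varepsilon/2$), since $g$ is holomorphic only for $-\varepsilon<\Re\{s\}<1+\varepsilon$; with that adjustment the argument is correct and matches the paper's.
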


\begin{proof}
The proof is almost identical to the proof of~\cite[Lemma 5]{FinderPhD}, so we will highlight only the main differences. Write $\mathfrak{L}(s)=\cL(s)\mathfrak{L}_{1}(s)$, where $\mathfrak{L}(s)$ is from the functional equation. Then the only zeros $\rho$ of $\mathfrak{L}(s)$ are the non-trivial zeros $\rho$ of $\cL(s)$, where $\rho\notin\{0,1\}$ by the assumption $\cL(1)\neq0$. Let $\varepsilon>0$. By the residue theorem
\begin{equation*}
\sum_{\rho} h\left(\frac{\rho-\frac{1}{2}}{\ie}\right) - m_{\cL}\left(h\left(\frac{1}{2\ie}\right)+h\left(-\frac{1}{2\ie}\right)\right) =
\frac{1}{2\pi\ie}\left(\int_{\gamma_1}+\int_{\gamma_2}\right)h\left(\frac{z-\frac{1}{2}}{\ie}\right)\frac{\mathfrak{L}'}{\mathfrak{L}}(z)\dif{z},
\end{equation*}
where $\gamma_1=\left\{z\in\C\colon \Re\{z\}=-\varepsilon/2\right\}$ and $\gamma_2=\left\{z\in\C\colon \Re\{z\}=1+\varepsilon/2\right\}$ with the appropriate orientation, see~\cite[p.~15]{FinderPhD}. Hence, we have obtained the first term in the right-hand side of \eqref{eq:GW}. Furthermore,
\begin{multline*}
\sum_{\rho} h\left(\frac{\rho-\frac{1}{2}}{\ie}\right) - m_{\cL}\left(h\left(\frac{1}{2\ie}\right)+h\left(-\frac{1}{2\ie}\right)\right) =\frac{1}{2\pi\ie}\int_{\gamma_2}\left(h\left(\frac{z-\frac{1}{2}}{\ie}\right)\frac{\cL'}{\cL}(z)+h\left(\frac{\frac{1}{2}-z}{\ie}\right)\overline{\frac{\cL'}{\cL}(\bar{z})}\right)\dif{z} \\
+ \frac{1}{2\pi\ie}\int_{\gamma_2}\left(h\left(\frac{z-\frac{1}{2}}{\ie}\right)\frac{\mathfrak{L}_{1}'}{\mathfrak{L}_{1}}(z)+h\left(\frac{\frac{1}{2}-z}{\ie}\right)\overline{\frac{\mathfrak{L}_{1}'}{\mathfrak{L}_{1}}(\bar{z})}\right)\dif{z}
\end{multline*}
by employing the functional equation, see~\cite[p.~16]{FinderPhD}. Note that
\[
\frac{\mathfrak{L}_{1}'}{\mathfrak{L}_{1}}(z) = \log{Q} + \sum_{j=1}^{f}\lambda_{j}\frac{\Gamma'}{\Gamma}\left(\lambda_{j}z+\mu_{j}\right).
\]
The last sum in~\eqref{eq:GW} comes from the first two integrals in the latter equation, see Step~3 in~\cite[p.~17]{FinderPhD}. After moving the line of integration in the last two integrals to $\Re\{z\}=1/2$ and noticing that $\left(\mathfrak{L}_{1}'/\mathfrak{L}_{1}\right)(z)$ is holomorphic on domain $\left\{z\in\C\colon \Re\{z\}>0\right\}$, see Step~4 in~\cite[p.~18]{FinderPhD}, we obtain the last two remaining terms from~\eqref{eq:GW}.
\end{proof}

\begin{remark}
\label{rem:GW}
Beside the conditions from Lemma~\ref{lemma:Guinand} assume also GRH and that $h(z)$ is real for $z\in\R$. Note that the last condition assures that $\overline{h(z)}=h(\bar{z})$ and also that $\overline{\widehat{h}(\xi)}=\widehat{h}(-\xi)$ for $\xi\in\R$. We can see that Lemma~\ref{lemma:Guinand} then implies
\begin{flalign*}
\sum_{\gamma}h(\gamma) = 2m_{\cL}\Re\left\{h\left(\frac{1}{2\ie}\right)\right\} + \frac{\log{Q}}{\pi}\widehat{h}(0) &- \frac{1}{\pi}\sum_{n=2}^\infty \frac{1}{\sqrt{n}}\Re\left\{\Lambda_{\cL}(n) \widehat{h}\left(\frac{\log n}{2\pi}\right)\right\} \\
&+\frac{1}{\pi}\sum_{j=1}^{f}\int_{-\infty}^{\infty}h(u)\lambda_{j}\Re \left\{\frac{\Gamma'}{\Gamma}\left(\frac{\lambda_j}{2}+\mu_j+\ie\lambda_{j}u\right)\right\}\dif{u},
\end{flalign*}
where the sum runs over the imaginary parts of the non-trivial zeros of $\cL(s)$.
\end{remark}

\begin{lemma}
\label{lem:SumOverZeros}
Let $\cL\in\mathcal{SP}$. Assume the Generalized Riemann Hypothesis for $\cL$ and the strong $\lambda$-conjecture. Assume also that
\begin{equation}
\label{eq:RangeForSigmaLemma}
\frac{1}{2} + \frac{\alpha_1}{\log{\log{\tau}}} \leq \sigma \leq \frac{3}{2}
\end{equation}
for $\alpha_1>0$, and
\[
\tau \geq \tau_0 \geq \max\left\{e^{\sqrt{60}},\exp{\left(e^{\alpha_1}\right)}\right\}, \quad
|t| \ge t_0\ge \max\left\{2\max_{1\leq j\leq f}\left\{\left|\mu_j\right|\right\},1\right\}.
\]
Then we have
\begin{equation}
\label{eq:SumOverZerosGeneral}
\sum_{\gamma}\frac{\sigma-\frac{1}{2}}{\left(\sigma-\frac{1}{2}\right)^2+(t-\gamma)^2} \leq \frac{\log{\tau}}{2} + \mathfrak{a}\left(m_{\cL},\alpha_1,t_0\right)\left(\log{\tau}\right)^{2-2\sigma} + 2m\log{\log{\tau}} + \mathfrak{b}\left(\sdeg,m,m_{\cL},\alpha_1,t_0,\tau_0\right),
\end{equation}
where
\begin{equation}
\label{eq:Funa}
\mathfrak{a}\left(m_{\cL},\alpha_1,t_0\right) \de \frac{1}{1-e^{-2\alpha_1}}\left(1+\frac{4m_{\cL}}{\left(t_0^2-\frac{3}{4}\right)\left(1-e^{-2\alpha_1}\right)}\right)
\end{equation}
and
\begin{multline}
\label{eq:funb}
\mathfrak{b}\left(\sdeg,m,m_{\cL},\alpha_1,t_0,\tau_0\right) \de \frac{\sdeg}{\pi}\coth^{2}{(\alpha_1)}\int_{0}^{\infty} \frac{\log{\left(1+\frac{y}{2\pi t_0}\right)}}{1+y^2}\dif{y} \\
+ \frac{2m_{\cL}\left(1+e^{-4\alpha_1}\right)}{\left(t_0^2-\frac{3}{4}\right)\left(1-e^{-2\alpha_1}\right)^2}
+ m\left(-1-\gamma+\frac{\frac{4\log{\log{\tau_0}}}{\log{\tau_0}}+\frac{2.24}{\log{\tau_0}-1}}{1-(\log{\tau_0})^{-1}}\right).
\end{multline}
\end{lemma}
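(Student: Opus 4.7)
The plan is to apply the Guinand--Weil formula (Lemma~\ref{lemma:Guinand}), in the symmetric form of Remark~\ref{rem:GW}, with the Poisson-kernel test function
\[
h(z) \de \frac{a}{a^{2}+(t-z)^{2}}, \qquad a \de \sigma-\tfrac{1}{2},
\]
which satisfies $h(\gamma)$ equal to the summand of~\eqref{eq:SumOverZerosGeneral} and whose Fourier transform is $\widehat{h}(\xi) = \pi e^{-2\pi a|\xi|}e^{-2\pi it\xi}$. The function $h$ has simple poles at $z=t\pm ia$ which lie inside the strip of Remark~\ref{rem:GW} whenever $a<\tfrac{1}{2}$; this is accommodated either by a contour shift picking up explicit residues, or by replacing $h$ with a Beurling--Selberg majorant of compactly supported Fourier transform and passing to a limit, either way producing only small corrections absorbed into $\mathfrak{a}$ and $\mathfrak{b}$. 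Invoking the strong $\lambda$-conjecture so that $\lambda_{j}=1/2$ for all $j$ and $f=\sdeg$, the formula reads
\[
\sum_\gamma \frac{a}{a^{2}+(t-\gamma)^{2}} = 2m_{\cL}\Re\{h(-i/2)\} + \log Q + \mathcal{I}_{\Gamma} - \mathcal{S}_{\Lambda},
\]
with $\mathcal{I}_{\Gamma} = \tfrac{1}{2\pi}\sum_{j=1}^{\sdeg}\int_{\R} h(u)\,\Re\{(\Gamma'/\Gamma)(\tfrac{1}{4}+\mu_{j}+\tfrac{iu}{2})\}\,du$ and $\mathcal{S}_{\Lambda} = \sum_{n\ge 2}\Lambda_{\cL}(n)\Re(n^{-it})/n^{\sigma}$, the latter defined for $\sigma\le 1$ by analytic continuation under GRH.

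The main term $\tfrac{1}{2}\log\tau$ arises from $\log Q$ combined with the archimedean integral. By Binet's formula $\Re(\Gamma'/\Gamma)(\alpha+i\beta) = \tfrac{1}{2}\log(\alpha^{2}+\beta^{2}) + O((\alpha^{2}+\beta^{2})^{-1})$, together with the Poisson-integral identity $\tfrac{1}{\pi}\int h(u)F(u)\,du = \widetilde{F}(t+ia)$ for the harmonic extension $\widetilde{F}$, the leading contribution to $\mathcal{I}_{\Gamma}$ evaluates to $\tfrac{\sdeg}{4}\log(t^{2}+a^{2}) - \tfrac{\sdeg}{2}\log 2$, and the strong $\lambda$-conjecture identity $\log\sq = \sdeg\log\pi + 2\log Q$ then combines this with $\log Q$ into exactly $\tfrac{1}{2}\log\tau$. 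The residual contributions --- the correction $\tfrac{1}{2}\log(t^{2}+a^{2})-\log|t|$ and the $O((1+u^{2})^{-1})$ Binet error integrated against $h$ --- are controlled by integrals of the form $\int_{0}^{\infty}\log(1+y/(2\pi t_{0}))/(1+y^{2})\,dy$; the prefactor $\coth^{2}(\alpha_{1})$ in~\eqref{eq:funb} comes from comparing $h$ to a Poisson kernel of width $\alpha_{1}/\log\log\tau$ uniformly on the range~\eqref{eq:RangeForSigmaLemma}, producing the $\sdeg$-summand of $\mathfrak{b}$. A direct calculation with $|t|\ge t_{0}\ge 1$ gives $\Re\{h(-i/2)\}\le a/(t_{0}^{2}-\tfrac{3}{4})$, so the polar contribution accounts for the $m_{\cL}$-dependent summands of both $\mathfrak{a}$ (via the Beurling--Selberg error amplified by the density of zeros) and $\mathfrak{b}$.

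The crux is the prime-sum estimate. For $\cL\in\cSP$ we have $|\Lambda_{\cL}(n)|\le m\Lambda(n)$ by~\eqref{eq:BoundOnLambda}, yet $\sum\Lambda(n)/n^{\sigma}$ diverges for $\sigma\leq 1$ --- precisely the regime where the lemma is needed. To avoid this, we retain the oscillation $n^{-it}$ and apply partial summation against $\widetilde{\psi}_{\cL}(x)$, using Schoenfeld's estimate $|\psi(x)-x|\le \tfrac{1}{8\pi}\sqrt{x}\log^{2}x$ under RH for $\zeta$ (as in Section~\ref{sec:PrimesForSP}); the exponential decay $|\widehat{h}(\log n/(2\pi))|=\pi n^{-a}$ together with $a\ge \alpha_{1}/\log\log\tau$ provides an effective cutoff at $n\sim(\log\tau)^{2}$, since $n^{-a}\leq e^{-2\alpha_{1}}$ there. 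The resulting bound on $|\mathcal{S}_{\Lambda}|$ consists of a leading $2m\log\log\tau$ from $\sum_{p\leq (\log\tau)^{2}} (\log p)/p$, a bounded Mertens--Schoenfeld remainder that matches the $m$-summand of $\mathfrak{b}$, and a tail of order $(\log\tau)^{2-2\sigma}/(1-e^{-2\alpha_{1}})$ that supplies the leading constant $1/(1-e^{-2\alpha_{1}})$ in $\mathfrak{a}$.

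The main obstacle is this prime-sum bound: the absolute-value estimate diverges precisely in the target range, so the cancellation in $n^{-it}$ and the exponential decay of $\widehat h$ must be used in tandem. The factor $(1-e^{-2\alpha_{1}})^{-1}$ in $\mathfrak{a}$ encodes exactly the efficiency of this cutoff. A secondary issue is the residue contribution when $h$ has poles inside the critical strip (for $a<1/2$), resolved by Beurling--Selberg majorisation and contributing only $O(m_{\cL}/(t_{0}^{2}-\tfrac{3}{4}))$ corrections absorbed in $\mathfrak{b}$.
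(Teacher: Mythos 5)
Your overall framework (Guinand--Weil plus a sum over primes and an archimedean term) is the right one, but the argument as written has a genuine gap at what you yourself identify as the crux: the prime sum. With the Poisson kernel $h(z)=a/(a^{2}+(t-z)^{2})$ the Fourier transform $\widehat{h}(\xi)=\pi e^{-2\pi a|\xi|}e^{-2\pi\ie t\xi}$ is \emph{not} compactly supported, so the prime-side term of the explicit formula is $\tfrac12\sum_{n}\Lambda_{\cL}(n)n^{-\sigma-\ie t}$, i.e.\ essentially $-\tfrac12(\cL'/\cL)(s)$. For $\sigma\le 1$ this series diverges absolutely, and your claim that $n^{-a}$ with $a\ge\alpha_{1}/\log\log\tau$ gives an ``effective cutoff at $n\sim(\log\tau)^{2}$'' is false: at that point $n^{-a}$ has only decayed to the constant $e^{-2\alpha_{1}}$, and the tail $\sum_{n>(\log\tau)^{2}}\Lambda(n)n^{-1/2-a}$ is infinite whenever $a\le 1/2$. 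Defining the sum ``by analytic continuation under GRH'' makes the argument circular: you would be bounding the sum over zeros by $|\cL'/\cL(s)|$ in the very range where the whole point of the lemma is to bound $\cL'/\cL$ via the sum over zeros. Partial summation against $\widetilde{\psi}_{\cL}$ does not rescue this, since $\widetilde{\psi}_{\cL}$ carries absolute values and cannot exploit the oscillation of $n^{-\ie t}$.

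The missing idea --- and what the paper actually does --- is that the bandlimited majorant is not an optional device for handling the poles of $h$ but the essential mechanism. One replaces $f_{a}$ by the entire majorant $h_{a,\Delta}(s)=\frac{a}{a^{2}+s^{2}}\cdot\frac{e^{2\pi a\Delta}+e^{-2\pi a\Delta}-2\cos(2\pi\Delta s)}{(e^{\pi a\Delta}-e^{-\pi a\Delta})^{2}}$ of exponential type $2\pi\Delta$ (so $\widehat{h}$ vanishes for $|\xi|>\Delta$), with $\Delta=\tfrac{1}{\pi}\log\log\tau$. Then the prime sum is \emph{genuinely} truncated at $n\le e^{2\pi\Delta}=(\log\tau)^{2}$, and the finite sum $\sum_{n\le(\log\tau)^{2}}\Lambda(n)n^{-1/2}|\widehat{h}|$ yields $2m\log\log\tau+O(m)$ by Mertens--Schoenfeld. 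The price of the majorisation is the factor $\widehat{h}(0)=\pi\coth(\pi a\Delta)$ multiplying the main term $\tfrac12\log\tau$; since $e^{-2\pi a\Delta}=(\log\tau)^{1-2\sigma}\le e^{-2\alpha_{1}}$ on the range~\eqref{eq:RangeForSigmaLemma}, this is where the factor $(1-e^{-2\alpha_{1}})^{-1}$ in $\mathfrak{a}$ and the $\coth^{2}(\alpha_{1})$ in $\mathfrak{b}$ actually originate --- not, as you suggest, from the efficiency of a cutoff in the prime sum or from comparing Poisson kernels of different widths. Your treatment of the archimedean and polar terms is broadly consistent with the paper's, but without the compactly supported transform the proof does not close.
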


\begin{proof}
The proof is similar to the one in~\cite[Sections~4 and~7]{ChirreSimonicHagen}. Indeed, we are using Lemma~\ref{lemma:Guinand} in order to estimate the sum over zeros.

The left-hand side of~\eqref{eq:SumOverZerosGeneral} can be written as $\sum_\gamma f_a(t-\gamma)$, where $a=\sigma-1/2$ and $f_{a}\colon\R\to\R$ is defined by
\[
f_a(x)=\frac{a}{a^2+x^2}.
\]
However, the function $f_a(x)$ does not satisfy the conditions from Lemma~\ref{lemma:Guinand}. So, as in~\cite{ChirreSimonicHagen}, let us set
$$
h(s)=h_{a,\Delta}(s)=\left(\frac{a}{a^2+s^2}\right)\left(\frac{e^{2\pi a \Delta}+e^{-2\pi a \Delta}-2\cos\left(2\pi \Delta s\right)}{\left(e^{\pi a \Delta}-e^{-\pi a \Delta}\right)^2}\right).
$$
By~\cite[Lemma 9]{Carneiro}, for any $\Delta >0$ the function $h(s)$ is an entire function of exponential type of $2\pi \Delta$, $f_a(u) \le h(u)$ for all $u \in \mathbb{R}$ and the Fourier transform $\widehat{h}(\xi)$ satisfies $\widehat{h}(\xi) \ge 0$ if $|\xi| \le \Delta$, $\widehat{h}(\xi)=0$ if $|\xi|>\Delta$ and $\widehat{h}(0)=\pi \coth\left({\pi a \Delta}\right)$. Therefore, the function $h(s)$ satisfies the assumptions given in Lemma~\ref{lemma:Guinand}. Employing Remark~\ref{rem:GW} for the function $z\mapsto h(t-z)$, we obtain
\begin{flalign}
\label{eq:sumWithGuinand}
\sum_{\gamma}f_a(t-\gamma) &\le \sum_{\gamma} h(t-\gamma) \le 2m_{\cL}\left|h\left(t+\frac{1}{2\ie}\right)\right|+\frac{\log Q}{\pi}\widehat{h}(0) \nonumber \\
&+\frac{1}{\pi}\sum_{j=1}^f\int_{-\infty}^{\infty}h(u)\lambda_{j}\Re \left\{\frac{\Gamma'}{\Gamma}\left(\frac{\lambda_j}{2}+\mu_j+\lambda_j(t-u)\ie\right)\right\}\dif{u} + \frac{1}{\pi}\sum_{n=2}^\infty \frac{1}{\sqrt{n}} \left|\Lambda_{\cL}(n) \widehat{h}\left(\frac{\log n}{2\pi}\right)\right|.
\end{flalign}
In what follows we are taking $\Delta=\frac{1}{\pi}\log\log \tau$.
%{\color{green}$\Delta=\log\log \tau/(\alpha_1 \pi)$ if $\alpha_1<1$ and $\Delta=\log\log \tau$ if $\alpha_1\geq 1$}
Let us consider each term on the right-hand side of~\eqref{eq:sumWithGuinand} separately.

Using similar idea as in~\cite[Section~4]{ChirreSimonicHagen}, we have that the value of the first term on the right-hand side of~\eqref{eq:sumWithGuinand} does not exceed
\begin{flalign}
\label{eq:1stTerm}
&\frac{2m_{\cL}a\left|e^{2\pi a\Delta}+e^{-2\pi a\Delta}-\cos(2t\pi \Delta)\left(e^{\pi \Delta}+e^{-\pi \Delta}\right)-\ie\sin(2t\pi\Delta)\left(e^{\pi \Delta}-e^{-\pi \Delta}\right)\right|}{\left|a^2+(t-\ie/2)^2\right|\left(e^{\pi a\Delta}-e^{-\pi a\Delta}\right)^2} \nonumber \\
&\leq \frac{2m_{\cL}a}{ \left|a^2-1/4+t^2\right|}\left(\frac{2e^{(1-2a)\pi\Delta}+1+e^{-4\pi a\Delta}}{\left(1-e^{-2\pi a\Delta}\right)^2}\right) \leq \frac{m_{\cL}(2\sigma-1)}{t^2-3/4}\left(\frac{2(\log \tau)^{2-2\sigma}+1+(\log{\tau})^{-2(2\sigma-1)}}{\left(1-(\log{\tau})^{- (2\sigma-1)}\right)^2}\right).
\end{flalign}

Let us now consider the second and third terms. Using $\Re\left\{\left(\Gamma'/\Gamma\right)(z)\right\}\leq \log{|z|}$ for $\Re\{z\}\geq 1/4$, see~\cite[Lemma 2.3]{ChandeeExplBounds}, we obtain
\begin{flalign}
\label{eq:BoundOnLogDerGamma}
\Re\left\{\frac{\Gamma'}{\Gamma}\left(\frac{\lambda_j}{2}+\mu_j+\lambda_j(t-u)\ie\right)\right\}
&\leq \log{\left|\frac{\lambda_j}{2}+\mu_j+\lambda_j(t-u)\ie\right|} \nonumber \\
&\leq \log{\left(2\lambda_j\pi|t|\right)}+\log{\left(\frac{1}{4\pi|t|}+\frac{\max_{1\leq j\leq f}\left\{\left|\mu_j\right|\right\}}{2\min_{1\leq j\leq f}\left\{\lambda_j\right\}\pi |t|}+\frac{1}{2\pi}\left|1-\frac{u}{t}\right|\right)} \nonumber \\
&\leq \log{\left(2\lambda_j\pi|t|\right)}+\log{\left(1+\frac{|u|}{2\pi t_0}\right)}
\end{flalign}
since we are assuming the strong $\lambda$-conjecture. Therefore,
\begin{equation*}
\sum_{j=1}^{f}\lambda_{j}\Re\left\{\frac{\Gamma'}{\Gamma}\left(\frac{\lambda_j}{2}+\mu_j+\lambda_j(t-u)\ie\right)\right\}
\leq \frac{1}{2}\log{\tau}-\log Q+\frac{\sdeg}{2}\log{\left(1+\frac{|u|}{2\pi t_0}\right)}.
\end{equation*}
Furthermore, because $\pi a \Delta\ge \alpha_1$ and
\begin{flalign}
\label{eq:2ndAnd3rdTerm}
&\frac{\log Q}{\pi}\widehat{h}(0) +\frac{1}{\pi}\sum_{j=1}^f\int_{-\infty}^{\infty}h(u)\lambda_{j}\Re \left\{\frac{\Gamma'}{\Gamma}\left(\frac{\lambda_j}{2}+\mu_j+\lambda_j(t-u)\ie\right)\right\}\dif{u} \nonumber \\
&\leq  \frac{\log Q}{\pi}\widehat{h}(0)+\frac{\widehat{h}(0)}{\pi}\left(\frac{1}{2}\log{\tau}-\log{Q}\right) +\frac{\coth{(\pi a \Delta)^2}}{\pi}\int_{-\infty}^{\infty}\frac{a}{a^2+u^2}\left(\frac{\sdeg}{2}\log{\left(1+\frac{|u|}{2\pi t_0}\right)}\right)\dif{u} \nonumber \\
&\leq
\frac{\log{\tau}}{2}+\frac{(\log \tau)^{2-2\sigma}}{1-e^{-2\alpha_1}}+
\frac{\sdeg}{\pi}\coth^{2}{(\alpha_1)}
\int_0^{\infty} \frac{\log{\left(1+\frac{y}{2\pi t_0}\right)}}{1+y^2}\dif{y},
\end{flalign}
where we also used~\cite[Inequality~(4.6)]{ChirreSimonicHagen} and the fact that $0<a\leq 1$.

We are left with the last term. Since the function $\cL(s)$ has a polynomial Euler product of order $m$, we know that~\eqref{eq:BoundOnLambda} holds. Thus, by the proof of~\cite[Theorem~5]{ChirreSimonicHagen}, the fourth term does not exceed
\begin{multline}
\label{eq:4thTerm}
m\left(\frac{2\log\log \tau-\gamma-1+0.24(\log \tau)^{-1}}{\left(1-(\log \tau)^{-1}\right)^2}\right) \leq 2m\log\log{\tau} \\
+ m\left(-1-\gamma+\frac{1}{1-(\log{\tau_0})^{-1}}\left(\frac{4\log{\log{\tau_0}}}{\log{\tau_0}}+\frac{2.24}{\log{\tau_0}-1}\right)\right).
\end{multline}
Simplifying~\eqref{eq:1stTerm} by using~\eqref{eq:RangeForSigmaLemma}, and then adding~\eqref{eq:2ndAnd3rdTerm} and~\eqref{eq:4thTerm}, we finally arrive at~\eqref{eq:SumOverZerosGeneral}. The proof of Lemma~\ref{lem:SumOverZeros} is thus complete.
\end{proof}

\begin{remark}
\label{rem:LemmaOverZeros}
The only instance where we used the strong $\lambda$-conjecture in the proof of Lemma~\ref{lem:SumOverZeros} was in~\eqref{eq:BoundOnLogDerGamma}. Without this assumption we know that
\[
\lambda_j\Re\left\{\frac{\Gamma'}{\Gamma}\left(\frac{\lambda_j}{2}+\mu_j+\lambda_j(t-u)\ie\right)\right\} \leq \lambda_j\log{\left(2\lambda_j\pi|t|\right)}+\lambda_j \log{\left(1+|u|\right)} + O_{\lambda^{-}}(1)
\]
by Stirling's formula $\left(\Gamma'/\Gamma\right)(z)=\log{z}+O\left(1/|z|\right)$, where $|t|\geq\mu^{+}/\lambda^{-}$ is sufficiently large, and $\lambda^{-}$ and $\mu^{+}$ are from~\eqref{eq:lambdapm}. Therefore, GRH implies that
\[
\sum_{\gamma}\frac{\sigma-\frac{1}{2}}{\left(\sigma-\frac{1}{2}\right)^2+(t-\gamma)^2} \leq \frac{\log{\tau}}{2} + O\left(m_{\cL}(\log{\tau})^{2-2\sigma}+\sdeg+m_{\cL}\right) + 2m\log{\log{\tau}} + O_{\lambda^{-}}(f)
\]
uniformly for $1/2+1/\log{\log{\tau}}\leq\sigma\leq3/2$ and sufficiently large $\tau$ and $|t|\geq\mu^{+}/\lambda^{-}$. Furthermore, if we are considering the full Selberg class $\cS$, then additionally to the above the estimate~\eqref{eq:4thTerm} should be replaced by the first bound from Lemma~\ref{lem:GeneralBoundForGW} (see the second last line of p. 9 of \cite{ChirreSimonicHagen} to see how the sum appears). Therefore, under this condition GRH implies that
\begin{flalign*}
\sum_{\gamma}\frac{\sigma-\frac{1}{2}}{\left(\sigma-\frac{1}{2}\right)^2+(t-\gamma)^2} &\leq \frac{\log{\tau}}{2} +  O\left(m_{\cL}(\log{\tau})^{2-2\sigma}\right) \\
&+ O\left(A_2\left(\mathcal{C}_{\cL}^{R}(\varepsilon),\mathcal{C}_{\cL}^{R}(\varepsilon)+\mathcal{C}_{\cL}^{E},\varepsilon,\theta,\tau\right)\right) + O\left(\sdeg+m_{\cL}\right) + O_{\lambda^{-}}(f)
\end{flalign*}
uniformly for $1/2+1/\log{\log{\tau}}\leq\sigma\leq3/2$ and sufficiently large $\tau$ and $|t|\geq\mu^{+}/\lambda^{-}$. Taking into account also Conjecture~\ref{conj:SelbergVariant} or~\ref{conj:SelbergVariant2}, then $A_2\left(a,b,\varepsilon,\theta,\tau\right)$ in the above inequality should be replaced accordingly to Lemma~\ref{lem:GeneralBoundForGW}. 
\end{remark}

\begin{remark}
\label{eq:Zsum1}
We are going to describe how to modify the above proof when $t=0$ and $\cL$ is entire. Note that then $m_{\cL}=0$ and we do not need estimate \eqref{eq:1stTerm}.
Moreover, under the strong $\lambda$-conjecture, we have
\begin{equation*}
\sum_{j=1}^{f}\lambda_{j}\Re\left\{\frac{\Gamma'}{\Gamma}\left(\frac{\lambda_j}{2}+\mu_j-\lambda_ju\ie\right)\right\}
\leq \frac{1}{2}\log{\sq}-\log Q+\frac{\sdeg}{2}\log{\left(\frac{1}{4\pi}+\frac{\mu^+}{\pi}+\frac{|u|}{2\pi}\right)},
\end{equation*}
and without that condition
\begin{equation*}
\sum_{j=1}^{f}\lambda_{j}\Re\left\{\frac{\Gamma'}{\Gamma}\left(\frac{\lambda_j}{2}+\mu_j-\lambda_ju\ie\right)\right\}
\leq \frac{1}{2}\log{\sq}-\log Q+\frac{\sdeg}{2}\log{\left(\frac{1}{4\pi}+\frac{\mu^+}{2\lambda^-\pi}+\frac{|u|}{2\pi}\right)}+O_{\lambda^-}(1).
\end{equation*}
To estimate \eqref{eq:4thTerm}, we need to replace $\tau$ with $\sq$ and $\tau_0$ with ${\sq}_0$. Therefore, under the assumptions of Lemma \ref{lem:SumOverZeros}, where $\tau$ is replaced by $\sq$ and we do not have any conditions for $t$,
\begin{equation*}
\sum_{\gamma}\frac{\sigma-\frac{1}{2}}{\left(\sigma-\frac{1}{2}\right)^2+\gamma^2}   \leq \frac{\log{\sq}}{2}+\mathfrak{a}\left(0,\frac{\log\log{{\sq}_0}}{2},1\right)\left(\log{\sq}\right)^{2-2\sigma} + 2m\log{\log{\sq}} + \mathfrak{b}_3\left(\sdeg,m, {\sq}_0\right),
\end{equation*}
where $\mathfrak{a}\left(m_{\cL},\alpha_1,t_0\right)$ is defined by~\eqref{eq:Funa} and
\begin{multline}
\label{def:b3}
\mathfrak{b}_3\left(\sdeg,m,{\sq}_0\right) \de \frac{\sdeg}{\pi}\coth^{2}{\left(\frac{\log\log{{\sq}_0}}{2}\right)}\int_{0}^{\infty} \frac{\log{\left(\frac{1}{4\pi}+\frac{\mu^+}{\pi}+\frac{y}{2\pi}\right)}}{1+y^2}\dif{y} \\
+ m\left(-1-\gamma+\frac{\frac{4\log{\log{{\sq}_0}}}{\log{{\sq}_0}}+\frac{2.24}{\log{{\sq}_0}-1}}{1-(\log{{\sq}_0})^{-1}}\right).
\end{multline}
Without the strong $\lambda$-conjecture, for sufficiently large $\sq$ and $\sigma \geq 1$, we have
\[
\sum_{\gamma}\frac{\sigma-\frac{1}{2}}{\left(\sigma-\frac{1}{2}\right)^2+\gamma^2}  \leq \frac{\log{\sq}}{2} + 2m\log{\log{\sq}} + O_{\lambda^{-},\mu^+}(\sdeg)+O_{\lambda^{-}}(f),
\]
and for the full Selberg class we have
\begin{flalign*}
\sum_{\gamma}\frac{\sigma-\frac{1}{2}}{\left(\sigma-\frac{1}{2}\right)^2+\gamma^2}  &\leq \frac{\log{\sq}}{2}+ O\left(A_2\left(\mathcal{C}_{\cL}^{R}(\varepsilon),\mathcal{C}_{\cL}^{R}(\varepsilon)+\mathcal{C}_{\cL}^{E},\varepsilon,\theta,\sq\right)\right) + O_{\lambda^{-},\mu^+}(\sdeg)+ O_{\lambda^{-}}(f).
\end{flalign*}
Under Conjectures~\ref{conj:SelbergVariant} or~\ref{conj:SelbergVariant2}, the term $A_2\left(a,b,\varepsilon,\theta,\sq\right)$ in the above inequality should be replaced accordingly to Lemma~\ref{lem:GeneralBoundForGW}. 
\end{remark}

\section{Proofs of Theorems~\ref{thm:MainGeneral}--\ref{thm:1line}}
\label{sec:ProofFirst}

\begin{proof}[Proof of Theorem~\ref{thm:MainGeneral}]
Assume the notation and conditions from Theorem~\ref{thm:MainGeneral}. Observe that fixed $\alpha\geq\log{2}$ guarantees that $x$ and $y$ from~\eqref{eq:xy} are not less than $2$ for sufficiently large $\tau$. By~\eqref{eq:SumOverZeros} and Remark~\ref{rem:LemmaOverZeros} we have that
\begin{multline}
\label{eq:proofThm1V2}
\mathcal{Z}(\sigma) \leq \frac{e^{\alpha}+1}{2\alpha}\left(\log{\tau}\right)^{2-2\sigma} + O\left(m_{\cL}\left(\log{\tau}\right)^{3-4\sigma}\right) \\ 
+ O\left(\frac{A_2\left(\mathcal{C}_{\cL}^{R}(\varepsilon),\mathcal{C}_{\cL}^{R}(\varepsilon)+\mathcal{C}_{\cL}^{E},\varepsilon,\theta,\tau\right)+\sdeg+m_{\cL}}{\left(\log{\tau}\right)^{2\sigma-1}}\right) + O_{\lambda^{-}}\left(\frac{f}{\left(\log{\tau}\right)^{2\sigma-1}}\right),
\end{multline}
and by~\eqref{eq:sum2} and~\eqref{eq:pole} also that
\begin{equation}
\label{eq:proofThm1}
\mathcal{R}_1(\sigma) \ll_{\lambda^{+}} \frac{f}{\left(\log{\tau}\right)^{2\sigma}}, \quad \mathcal{R}_2(\sigma)\ll m_{\cL}\left(\frac{\sq}{\tau}\right)^{\frac{2}{\sdeg}}\left(\log{\tau}\right)^{2-2\sigma},
\end{equation}
all valid for $1/2+\delta\leq\sigma\leq 3/2$ with the implied constants being absolute. Also, by~\eqref{eq:proofThm1V2} and~\eqref{eq:proofThm1},
\begin{multline}
\label{eq:proofThm1V3}
\int_{\sigma}^{\frac{3}{2}}\left(\mathcal{Z}+\mathcal{R}_1+\mathcal{R}_2\right)(\sigma')\dif{\sigma'} \leq \left(\frac{e^{\alpha}+1}{4\alpha}\right)\frac{\left(\log{\tau}\right)^{2-2\sigma}}{\log{\log{\tau}}} + O\left(\frac{m_{\cL}\left(\log{\tau}\right)^{3-4\sigma}}{\log{\log{\tau}}}\right) + O_{\lambda^{+},\lambda^{-}}\left(\frac{\left(\log{\tau}\right)^{1-2\sigma}f}{\log{\log{\tau}}}\right) \\
+ O\left(\frac{A_2\left(\mathcal{C}_{\cL}^{R}(\varepsilon),\mathcal{C}_{\cL}^{R}(\varepsilon)+\mathcal{C}_{\cL}^{E},\varepsilon,\theta,\tau\right)+\sdeg+m_{\cL}}{\left(\log{\tau}\right)^{2\sigma-1}\log{\log{\tau}}}\right) + O\left(\frac{m_{\cL}\left(\frac{\sq}{\tau}\right)^{\frac{2}{\sdeg}}\left(\log{\tau}\right)^{2-2\sigma}}{\log{\log{\tau}}}\right)
\end{multline}
for $1/2+\delta\leq\sigma\leq3/2$. The proof of Theorem~\ref{thm:MainGeneral} now follows by employing~\eqref{eq:proofThm1V2},~\eqref{eq:proofThm1},~\eqref{eq:proofThm1V3} and Lemma~\ref{lem:GeneralBoundForS} in~\eqref{eq:LogDerEffective} and~\eqref{eq:mainForLog}.
\end{proof}

\begin{proof}[Proof of Theorem~\ref{thm:MainGeneralV2}]
The approach is the same as in the proof of Theorem~\ref{thm:MainGeneral}, just now we are using inequalities~\eqref{eq:proofThm1V2} and~\eqref{eq:proofThm1V3} with $A_2\left(a,b,\varepsilon,\theta,\tau\right)$ that is in accordance with Lemma~\ref{lem:GeneralBoundForGW}, and also Lemma~\ref{lem:GeneralBoundForSV2} in place of Lemma~\ref{lem:GeneralBoundForS}.
\end{proof}

\begin{proof}[Proof of Theorem~\ref{thm:MainGeneral1line}]
Inequality~\eqref{eq:MainGeneral1lineV1} easily follows by employing ~\eqref{eq:proofThm1V2} with $A_2\left(a,b,\varepsilon,\theta,\tau\right)$ that is in accordance with Lemma~\ref{lem:GeneralBoundForGW} and~\eqref{eq:proofThm1} for $\sigma=1$, together with Lemma~\ref{lem:GeneralBoundFor1Line}, in~\eqref{eq:LogDerEffective}. Inequality~\eqref{eq:MainGeneral1lineV2} follows by employing~\eqref{eq:proofThm1V3} for $\sigma=1$ and with $A_2\left(a,b,\varepsilon,\theta,\tau\right)$ that is in accordance with Lemma~\ref{lem:GeneralBoundForGW}, together with Lemma~\ref{lem:GeneralBoundForGW} and~\eqref{eq:GeneralBoundForSV2_3} in~\eqref{eq:mainForLog}. 
\end{proof}

\begin{proof}[Proof of Theorem~\ref{thm:MainNonExplicit}]
Assume the notation and conditions from Theorem~\ref{thm:MainNonExplicit}. Observe that fixed $\alpha\in[\log{2},2)$ guarantees that $x$ and $y$ from~\eqref{eq:xy} are not less than $2$ for sufficiently large $\tau$. In what follows we are assuming that $\tau$ and $|t|\geq\mu^{+}/\lambda^{-}$ are sufficiently large. We can see from Remark~\ref{rem:LemmaOverZeros} that
\begin{equation}
\label{eq:proofThm2}
\mathcal{Z}(\sigma) \leq
\frac{e^{\alpha}+1}{2\alpha}\left(\log{\tau}\right)^{2-2\sigma} + O\left(m_{\cL}\left(\log{\tau}\right)^{3-4\sigma}
+ \frac{m\log{\log{\tau}}}{\left(\log{\tau}\right)^{2\sigma-1}} + \frac{\sdeg+m_{\cL}}{\left(\log{\tau}\right)^{2\sigma-1}}\right)
+ O_{\lambda^{-}}\left(\frac{f}{\left(\log{\tau}\right)^{2\sigma-1}}\right)
\end{equation}
uniformly for $1/2+1/\log{\log{\tau}}\leq\sigma\leq3/2$, where $\mathcal{Z}(\sigma)$ is defined by~\eqref{eq:SumOverZeros}. Note that
\[
\mathcal{Z}(\sigma) = O\left(1+\frac{m\log{\log{\tau}}}{\log{\tau}}+\frac{\sdeg+m_{\cL}}{\log{\tau}}\right) + O_{\lambda^{-}}\left(\frac{f}{\log{\tau}}\right)
\]
uniformly for $|1-\sigma|\leq 1/\log{\log{\tau}}$. We also have
\begin{equation}
\label{eq:R1R2}
\mathcal{R}_1(\sigma) \ll_{\lambda^{+}} \frac{f}{\left(\log{\tau}\right)^{(2-\alpha)\sigma}}, \quad
\mathcal{R}_2(\sigma) \ll m_{\cL}\left(\frac{\sq}{\tau}\right)^{\frac{2}{\sdeg}}\left(\log{\tau}\right)^{2-2\sigma}
\end{equation}
uniformly for $1/2+1/\log{\log{\tau}}\leq\sigma\leq3/2$, where $\mathcal{R}_1(\sigma)$ and $\mathcal{R}_2(\sigma)$ are defined by~\eqref{eq:sum2} and~\eqref{eq:pole}, respectively. In addition,
\begin{equation}
\label{eq:R1R2Near1}
\mathcal{R}_1(\sigma) \ll_{\lambda^{+}} \frac{f}{\left(\log{\tau}\right)^{2}}, \quad
\mathcal{R}_2(\sigma) \ll m_{\cL}\left(\frac{\sq}{\tau}\right)^{\frac{2}{\sdeg}}
\end{equation}
uniformly for $|1-\sigma|\leq 1/\log{\log{\tau}}$. Note that
\[
\frac{\left(\log{\log{\tau}}\right)^{2}}{(2\sigma-1)\left(\log{\tau}\right)^{2\sigma-1}} \ll \frac{1}{(2\sigma-1)^{3}}
\]
uniformly for $\sigma>1/2$. By~\eqref{eq:BoundForS} we thus have
\[
S_{\cL,x,y}(\sigma) \leq A(m,\alpha,\sigma,\sigma)\left(\log{\tau}\right)^{2-2\sigma}-\dfrac{m\sigma2^{1-\sigma}}{1-\sigma} +
O\left(\frac{m}{(2\sigma-1)^3}\right)
\]
uniformly for $1/2+1/\log{\log{\tau}}\leq\sigma\leq1-1/\log{\log{\tau}}$, where $A(a,\alpha,u,\sigma)$ is defined by~\eqref{eq:A}. In addition, by~\eqref{eq:BoundForSAround1} we have
\[
S_{\cL,x,y}(\sigma) \leq 2m\log{\log{\tau}} + O\left(m+m|1-\sigma|\left(\log{\log{\tau}}\right)^{2}\right)
\]
uniformly for $|1-\sigma|\leq 1/\log{\log{\tau}}$. The proof of inequality~\eqref{eq:MainNonExplicit} is complete after employing the above estimates on all terms from~\eqref{eq:LogDerEffective}.

We are using~\eqref{eq:mainForLog} to prove the second part of Theorem~\ref{thm:MainNonExplicit}. It follows by~\eqref{eq:proofThm2} and~\eqref{eq:R1R2} that
\begin{multline*}
\int_{\sigma}^{\frac{3}{2}}\mathcal{Z}\left(\sigma'\right)\dif{\sigma'} \leq \left(\frac{e^{\alpha}+1}{4\alpha}\right)\frac{\left(\log{\tau}\right)^{2-2\sigma}}{\log{\log{\tau}}} \\
+ O\left(\frac{m_{\cL}\left(\log{\tau}\right)^{3-4\sigma}}{\log{\log{\tau}}}+\frac{m}{\left(\log{\tau}\right)^{2\sigma-1}}
+\frac{\sdeg+m_{\cL}}{\left(\log{\tau}\right)^{2\sigma-1}\log{\log{\tau}}}\right)
+ O_{\lambda^{-}}\left(\frac{f}{\left(\log{\tau}\right)^{2\sigma-1}\log{\log{\tau}}}\right)
\end{multline*}
and
\begin{gather}
\label{eq:IntegralR1SP}
\int_{\sigma}^{\frac{3}{2}}\mathcal{R}_{1}\left(\sigma'\right)\dif{\sigma'} \ll_{\lambda^{+}}
\frac{f}{(2-\alpha)\left(\log{\tau}\right)^{(2-\alpha)\sigma}\log{\log{\tau}}}, \\
\int_{\sigma}^{\frac{3}{2}}\mathcal{R}_{2}\left(\sigma'\right)\dif{\sigma'} \ll
m_{\cL}\left(\frac{\sq}{\tau}\right)^{\frac{2}{\sdeg}}\frac{\left(\log{\tau}\right)^{2-2\sigma}}{\log{\log{\tau}}} \nonumber
\end{gather}
uniformly for $1/2+1/\log{\log{\tau}}\leq\sigma\leq3/2$. Because $\mathcal{R}_1(\sigma)\ll_{\lambda^{+}}f\cdot\left(\log{\tau}\right)^{-2\sigma}$ uniformly for $1-1/\log{\log{\tau}}\leq\sigma\leq 3/2$, we have by the above estimates that
\[
\int_{\sigma}^{\frac{3}{2}}\left(\mathcal{Z}+\mathcal{R}_1+\mathcal{R}_2\right)(\sigma')\dif{\sigma'} =
O\left(m+\frac{\sdeg+m_{\cL}}{\log{\tau}\log{\log{\tau}}}
+\left(\frac{\sq}{\tau}\right)^{\frac{2}{\sdeg}}\frac{m_{\cL}}{\log{\log{\tau}}}\right) + O_{\lambda^{+},\lambda^{-}}\left(\frac{f}{\log{\tau}\log{\log{\tau}}}\right)
\]
uniformly for $|1-\sigma|\leq 1/\log{\log{\tau}}$. By~\eqref{eq:SHat1} and~\eqref{eq:Ex} we have
\begin{multline*}
\widehat{S}_{\cL,x,y}(\sigma) \leq \eta(\alpha,\sigma,\tau)A\left(m,\alpha,\sigma,\sigma\right)
\frac{\left(\log{\tau}\right)^{2-2\sigma}}{\log{\log{\tau}}} - \frac{m\eta(\alpha,\sigma,\tau)}{(1-\sigma)\log{\log{\tau}}} \\
+ m\log{\left(2\log{\log{\tau}}\right)}
+ O\left(\frac{m\left(\log{\tau}\right)^{2-2\sigma}}{(1-\sigma)^{2}\left(\log{\log{\tau}}\right)^2}+\frac{m}{(2\sigma-1)^2}\right)
\end{multline*}
and
\[
\mathcal{E}_x \ll \frac{m}{\left(\log{\tau}\right)^{1-\frac{\alpha}{2}}\log{\log{\tau}}}
\]
uniformly for $1/2+1/\log{\log{\tau}}\leq\sigma\leq1-1/\log{\log{\tau}}$, where $\eta(\alpha,\sigma,\tau)$ is defined by~\eqref{eq:eta}. In addition, by~\eqref{eq:SHat2} and~\eqref{eq:Ex} we have
\[
\widehat{S}_{\cL,x,y}(\sigma) \leq m\log{\left(2\log{\log{\tau}}\right)} + O(m)
\]
and $\mathcal{E}_x \ll m$ uniformly for $|1-\sigma|\leq 1/\log{\log{\tau}}$. The proof of inequality~\eqref{eq:MainNonExplicit2} is complete after employing the above estimates on all terms from~\eqref{eq:mainForLog}.
\end{proof}

\begin{proof}[Proof of Theorem~\ref{thm:1line}]
Assume the notation and conditions from Theorem~\ref{thm:1line}. Let $\sigma=1$, and let $\tau$ and $|t|\geq\mu^{+}/\lambda^{-}$ be sufficiently large. By~\eqref{eq:proofThm2} we have
\[
\mathcal{Z}(1) \leq \frac{e^{\alpha}+1}{2\alpha} + O\left(\frac{m\log{\log{\tau}}}{\log{\tau}}+\frac{\sdeg+m_{\cL}}{\log{\tau}}\right) + O_{\lambda^{-}}\left(\frac{f}{\log{\tau}}\right)
\]
and by~\eqref{eq:BoundForS1} we also have
\[
S_{\cL,x,y}(1) \leq 2m\log{\log{\tau}} - m\left(\gamma + \alpha\right) + O\left(\frac{m\left(\log{\log{\tau}}\right)^{2}}{\log{\tau}}\right).
\]
Then~\eqref{eq:LogDerOn1} follows by taking the last two bounds and~\eqref{eq:R1R2Near1} in~\eqref{eq:LogDerEffective}. To prove~\eqref{eq:LogOn1}, note that
\begin{flalign*}
\int_{1}^{\frac{3}{2}}\left(\mathcal{Z}+\mathcal{R}_1+\mathcal{R}_2\right)(\sigma')\dif{\sigma'} &\leq \frac{e^{\alpha}+1}{4\alpha\log{\log{\tau}}} + O\left(\frac{m}{\log{\tau}}+\frac{\sdeg+m_{\cL}}{\log{\tau}\log{\log{\tau}}}\right) + O_{\lambda^{-}}\left(\frac{f}{\log{\tau}\log{\log{\tau}}}\right) \\ 
&+ O_{\lambda^{+}}\left(\frac{f}{\left(\log{\tau}\right)^{2}\log{\log{\tau}}}\right) + O\left(\frac{m_{\cL}}{\log{\log{\tau}}}\left(\frac{\sq}{\tau}\right)^{\frac{2}{\sdeg}}\right),
\end{flalign*}
see the proof of Theorem~\ref{thm:MainNonExplicit} for details. Also, $\mathcal{E}_{x}\ll m/\left(\log{\tau}\log{\log{\tau}}\right)$ by~\eqref{eq:Ex}, and 
\[
\widehat{S}_{\cL,x,y}(1) \leq m\log{\left(2\log{\log{\tau}}\right)} + m\gamma + O\left(\frac{m\log{\log{\tau}}}{\log{\tau}}\right)
\]
by~\eqref{eq:SHatOn1}. Then~\eqref{eq:LogOn1} follows by taking the last three inequalities in~\eqref{eq:mainForLog}.
\end{proof}

\section{Proofs of Theorems~\ref{thm:MainExplicit} and~\ref{thm:1LineExplicit}, and Remarks~\ref{thm:1Conjectures},~\ref{thm:1EulerProduct} and~\ref{rmk:sigma1Explicit}}
\label{sec:ProofSecond}

\begin{proof}[Proof of Theorem~\ref{thm:MainExplicit}]
The proof of both estimates from~\eqref{eq:3rdCase} is relatively simple and holds unconditionally since $\cL\in\cSP$ and thus
\begin{gather*}
\left|\frac{\cL'}{\cL}(s)\right| \leq m\sum_{p}\sum_{k=1}^{\infty}\frac{\log{p}}{p^{k\sigma_0}} = -m\frac{\zeta'}{\zeta}(\sigma_0) \leq \frac{m}{\sigma_0-1}, \\
\left|\log{\cL(s)}\right| \leq m\sum_{p}\sum_{k=1}^{\infty}\frac{1}{kp^{k\sigma_0}} = m\log{\zeta(\sigma_0)} \leq m\log{\frac{1}{\sigma_0-1}} + m\gamma(\sigma_0-1)
\end{gather*}
for $\sigma\geq\sigma_0>1$. The second inequality in the former expression follows from~\cite{Delange} or~\cite[Exercise 11.1.1(3)]{MontgomeryVaughan}, and the second inequality in the latter expression follows from~\cite[Lemma 1]{BastienRogalski2002} or~\cite[Lemma 5.4]{Ramare}. When we set $\sigma_0=1+\alpha_3/\log\log\tau$, we have proved the case \eqref{eq:3rdCase}.

Next, we are going to prove~\eqref{eq:1stCase}, i.e., upper bounds for the terms $\left|\cL'(s)/\cL(s)\right|$ and $\left|\log{\cL(s)}\right|$ under assumption (1) of Theorem~\ref{thm:MainExplicit}. Assume these conditions. By \eqref{eq:LogDerEffective}, it is sufficient to estimate the terms $S_{\cL,x,y}(\sigma)$, $\mathcal{Z}(\sigma)$, $\mathcal{R}_1(\sigma)$ and $\mathcal{R}_2(\sigma)$ in order to estimate the term $\left|\cL'(s)/\cL(s)\right|$. Now $S_{\cL,x,y}(\sigma)$ can be bounded by~\eqref{eq:BoundForS} and Lemma~\ref{lem:SumOverZeros} can be used in order to estimate $\mathcal{Z}(\sigma)$. To bound $\mathcal{R}_1(\sigma)$ from~\eqref{eq:sum2}, note that
\begin{equation*}
\exp{\left(\frac{2\alpha\sigma}{2\sigma-1}\right)} \leq \left(\log{\tau}\right)^{\frac{\alpha}{\alpha_1}\sigma}
\end{equation*}
for $\sigma\geq1/2+\alpha_1/\log{\log{\tau}}$, and
\begin{equation}
\label{eq:SumLambda}
\sum_{j=1}^{f}\lambda_j^2\left(\frac{\Gamma'}{\Gamma}\right)'\left(\frac{\lambda_j}{2}\right) = \frac{f}{4}\left(\frac{\Gamma'}{\Gamma}\right)'\left(\frac{1}{4}\right) \leq 4.3\sdeg
\end{equation}
by the strong $\lambda$-conjecture. Hence,
\begin{equation}
\label{eq:ExpEst}
    \mathcal{R}_1(\sigma)\leq \frac{ 4.3\sdeg \left(\sigma-\frac{1}{2}\right)}{\alpha \left(\log \tau\right)^{2\sigma}}\left(1+\left(\log{\tau}\right)^{\frac{\alpha}{\alpha_1}\sigma}\right).
\end{equation}
We estimate $\mathcal{R}_2(\sigma)$ from~\eqref{eq:pole} trivially by using $\sigma<1$ leading to the estimate
\begin{equation}
\label{eq:estR2}
   \mathcal{R}_2(\sigma) < \frac{m_{\cL}}{\alpha}\left(\frac{\sq}{\tau}\right)^{\frac{2}{\sdeg}}\left(\log{\tau}\right)^{2(1-\sigma)}.
\end{equation}
Collecting all bounds for all terms, we get estimate~\eqref{eq:1stCase} for the term $\left|\cL'(s)/\cL(s)\right|$. Now we are using~\eqref{eq:mainForLog} in order to prove~\eqref{eq:1stCaseB}, i.e., estimate for $\left|\log{\cL(s)}\right|$. Hence, it is sufficient to estimate $\widehat{S}_{\cL,x,y}(\sigma)$, $\mathcal{E}_x$ and integrals of $\mathcal{Z}(\sigma)$, $\mathcal{R}_1(\sigma)$ and $\mathcal{R}_2(\sigma)$. We are estimating $\widehat{S}_{\cL,x,y}(\sigma)$ by~\eqref{eq:SHat1} and $\mathcal{E}_x$ by~\eqref{eq:Ex}. Note that Lemma~\ref{lem:SumOverZeros} implies
\begin{equation}
\label{eq:IntegralZ}
\int_{\sigma}^{\frac{3}{2}}\mathcal{Z}\left(\sigma'\right)\dif{\sigma'} \leq \frac{\left(e^{\alpha}+1\right)\left(\log{\tau}\right)^{2-2\sigma}}{4\alpha\log{\log{\tau}}}
+ \frac{\mathfrak{a}\left(e^{\alpha}+1\right)\left(\log{\tau}\right)^{3-4\sigma}}{4\alpha\log{\log{\tau}}}
+ \frac{m\left(e^{\alpha}+1\right)}{\alpha\left(\log{\tau}\right)^{2\sigma-1}}
+ \frac{\max\{0,\mathfrak{b}\}\left(e^{\alpha}+1\right)}{2\alpha\left(\log{\tau}\right)^{2\sigma-1}\log{\log{\tau}}},
\end{equation}
and that~\eqref{eq:ExpEst} and~\eqref{eq:estR2} guarantee
\begin{gather}
\int_{\sigma}^{\frac{3}{2}}\mathcal{R}_{1}\left(\sigma'\right)\dif{\sigma'} \leq
\frac{4.3\sdeg\alpha_1\left(1+\left(\log{\tau_0}\right)^{-\frac{\alpha}{2\alpha_1}}\right)}
{\alpha\left(2\alpha_1-\alpha\right)\left(\log{\tau}\right)^{\left(2-\frac{\alpha}{\alpha_1}\right)\sigma}\log{\log{\tau}}}, \label{eq:IntegralR1} \\
\int_{\sigma}^{\frac{3}{2}}\mathcal{R}_{2}\left(\sigma'\right)\dif{\sigma'} \leq
\frac{m_{\cL}\left(1+e^{\frac{\alpha}{2}}\right)}{2\alpha}\left(\frac{\sq}{\tau}\right)^{\frac{2}{\sdeg}}
\frac{\left(\log{\tau}\right)^{2-2\sigma}}{\log{\log{\tau}}}. \label{eq:IntegralR2}
\end{gather}
In the derivation of the last inequality we also used the fact that $(\sigma-1)/(2\sigma-1)$ is an increasing function. Collecting all bounds for all terms in~\eqref{eq:mainForLog} finally gives~\eqref{eq:1stCaseB}.

We are going to prove~\eqref{eq:2ndCase}. Assume the conditions from the case~(2) of Theorem~\ref{thm:MainExplicit}. Note that
\begin{equation}
\label{eq:boundary}
\frac{3}{4} < 1 - \frac{\alpha_2}{\log{\log{\tau_0}}} \leq \sigma \leq 1 + \frac{\alpha_3}{\log{\log{\tau_0}}} < \frac{3}{2}.
\end{equation}
Then $S_{\cL,x,y}(\sigma)$ can be bounded by~\eqref{eq:BoundForSAround1} and Lemma~\ref{lem:SumOverZeros} can be used for $\alpha_1=\frac{1}{2}\log{\log{\tau_0}}-\alpha_2$ in order to estimate $\mathcal{Z}(\sigma)$. We are using also
\[
\left(\log{\tau}\right)^{2-2\sigma} \leq 1 + 2\theta_2(M)|1-\sigma|\log{\log{\tau}},
\]
where $\theta_2(u)$ and $M$ are as in Theorem~\ref{thm:MainExplicit}. The functions $\mathcal{R}_1(\sigma)$ and $\mathcal{R}_2(\sigma)$ are bounded trivially by using~\eqref{eq:SumLambda} and~\eqref{eq:boundary}, while observing also that $\sigma/(2\sigma-1)$ is a decreasing function. Collecting all bounds for all terms in~\eqref{eq:LogDerEffective} gives~\eqref{eq:2ndCase}. We are using~\eqref{eq:mainForLog} in order to prove~\eqref{eq:2ndCaseB}. We are estimating $\widehat{S}_{\cL,x,y}(\sigma)$ by~\eqref{eq:SHat2} and $\mathcal{E}_x$ by~\eqref{eq:Ex}, where we are using also
\[
\frac{1}{2} \leq \eta(\alpha,\sigma,\tau) \leq \frac{1}{2}\left(1-\frac{\alpha}{\log{\log{\tau_0}}-2\alpha_2}\right)^{-1}.
\]
Furthermore, the estimate~\eqref{eq:IntegralZ} can be used with $\mathfrak{a}$ and $\mathfrak{b}$ replaced by $\mathfrak{a}_1$ and $\mathfrak{b}_1$, the latter pair of functions being as in Theorem~\ref{thm:MainExplicit}. Also, we are using~\eqref{eq:IntegralR2}, while~\eqref{eq:IntegralR1} is replaced by
\[
    \int_{\sigma}^{\frac{3}{2}}\mathcal{R}_1\left(\sigma'\right)\dif{\sigma'} \leq \frac{4.3\sdeg\left(1+\exp{\left(\frac{2\alpha\left(\log{\log{\tau_0}}-\alpha_2\right)}{\log{\log{\tau_0}}-2\alpha_2}\right)}\right)}{2\alpha\left(\log{\tau}\right)^{2\sigma}\log{\log{\tau}}}.
\]
Collecting all bounds for all terms in~\eqref{eq:mainForLog} finally gives~\eqref{eq:2ndCaseB}. The proof of Theorem~\ref{thm:MainExplicit} is thus complete.
\end{proof}

\begin{proof}[Proof of Theorem~\ref{thm:1LineExplicit}]
Note that the conditions of Theorem~\ref{thm:1LineExplicit} guarantee that $x\geq 60$ and $y\geq 2$. Firstly, we are going to prove~\eqref{eq:LogDer1LineExp}. We can use~\eqref{eq:BoundForS1} for the estimation of $S_{\cL,x,y}(1)$. Moreover, Lemma~\ref{lem:SumOverZeros} can be used for $\alpha_1=\frac{1}{2}\log{\log{\tau_0}}$ in order to bound $\mathcal{Z}(1)$ from~\eqref{eq:SumOverZeros}. Estimates for $\mathcal{R}_1(1)$ and $\mathcal{R}_2(1)$ from~\eqref{eq:sum2} and~\eqref{eq:pole}, respectively, can be obtained by trivial estimation while using also~\eqref{eq:SumLambda}. Taking all estimates in~\eqref{eq:LogDerEffective} furnishes the proof of~\eqref{eq:LogDer1LineExp}. Next, we need to prove also~\eqref{eq:Log1LineExp}. We are estimating $\widehat{S}_{\cL,x,y}(1)$ by~\eqref{eq:SHatOn1} and $\mathcal{E}_x$ by~\eqref{eq:Ex}. Furthermore, estimate~\eqref{eq:IntegralZ} can be used for $\sigma=1$ with $\mathfrak{a}$ and $\mathfrak{b}$ replaced by $\mathfrak{a}_2$ and $\mathfrak{b}_2$, the latter pair of functions being as in Theorem~\ref{thm:1LineExplicit}. For the remaining two terms we are using~\eqref{eq:IntegralR2} for $\sigma=1$, and
\begin{equation}
\label{eq:R1IntCase2}
\int_{1}^{\frac{3}{2}}\mathcal{R}_1\left(\sigma'\right)\dif{\sigma'} \leq \frac{4.3\sdeg\left(e^{2\alpha}+1\right)}{2\alpha\left(\log{\tau}\right)^{2}\log{\log{\tau}}}.
\end{equation}
Inequality~\eqref{eq:Log1LineExp} now follows by~\eqref{eq:mainForLog}. The proof of Theorem~\ref{thm:1LineExplicit} is thus complete.
\end{proof}

\begin{proof}[Proofs of Remarks~\ref{thm:1Conjectures},~\ref{thm:1EulerProduct} and~\ref{rmk:sigma1Explicit}]
In the cases of the logarithmic derivative, let us use estimate~\eqref{eq:oneFormulaLogDer} and Remarks~\ref{rmk:Sreal} and~\ref{eq:Zsum1}. For Remark~\ref{thm:1Conjectures}, we use estimates~\eqref{eq:GeneralBoundFor1Line1} and~\eqref{eq:proofThm1}. Note that in the proof of~\ref{eq:GeneralBoundFor1Line1} we have used $\alpha=1$ and hence we have to do the same choice for the whole theorem. In the case of~Remark \ref{thm:1EulerProduct}, we apply also estimates~\eqref{eq:BoundForS1} and~\eqref{eq:R1R2Near1}. Finally, using estimates~\eqref{eq:BoundForS1} and~\eqref{eq:SumLambda}, the bound for the logarithmic derivative in Remark~\ref{rmk:sigma1Explicit} follows.

Let us now consider logarithms. We use estimate~\eqref{eq:mainForLog} and Remarks~\ref{rmk:Sreal} and~\ref{eq:Zsum1} in every case. First, we prove the estimate for logarithm in Remark~\ref{thm:1Conjectures} applying estimates~\eqref{eq:GeneralBoundForSV2_3},~\eqref{eq:GeneralBoundFor1Line2} and~\eqref{eq:proofThm1} in addition to remembering that in the proof of~\eqref{eq:GeneralBoundFor1Line2} we have used $\alpha=1$. To prove Remark~\ref{thm:1EulerProduct}, we use estimates~\eqref{eq:Ex},~\eqref{eq:SHatOn1} and~\eqref{eq:IntegralR1SP}. Lastly, we can derive the estimate in Remark~\ref{rmk:sigma1Explicit} by using estimates~\eqref{eq:Ex},~\eqref{eq:SHatOn1} and~\eqref{eq:R1IntCase2}.
\end{proof}

\section{Proofs of Corollaries ~\ref{cor:ZetaExplicit} and~\ref{corol:FixedRegions}} 
\label{sec:Corollaries}
\label{sec:simpl}

The next lemma shows that $\alpha\approx1.278$ is an optimal value for $\widehat{A}(m,\alpha,\sigma)$ in the sense that it minimizes this function \emph{for all} $\sigma\in(1/2,1)$. For sufficiently large $\tau$ this is also true for $\widetilde{A}(m,\alpha,\sigma,\tau)$ since then $\widetilde{A}(m,\alpha,\sigma,\tau)\approx\frac{1}{2}\widehat{A}(m,\alpha,\sigma)$. However, note that for $\sigma$ on some smaller intervals one can obtain better estimates.

\begin{lemma}
Let $0<\alpha\leq\alpha_0$, where $\alpha_0\approx 1.278$ is the solution of $(1-\alpha_0)e^{\alpha_0}+1=0$. Take $\sigma\in(1/2,1)$ and $m\in\N$. Then $\widehat{A}\left(m,\alpha_0,\sigma\right)\leq\widehat{A}\left(m,\alpha,\sigma\right)$, where $\widehat{A}\left(m,\alpha,\sigma\right)$ is defined by~\eqref{eq:WidehatA}.   
\end{lemma}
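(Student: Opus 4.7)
The plan is to regard $\widehat{A}(m,\alpha,\sigma)$ as a function of $\alpha\in(0,\infty)$ alone (with $m$ and $\sigma$ held fixed) and show that it is non-increasing on $(0,\alpha_0]$; the statement of the lemma is then immediate, since the desired inequality is exactly this monotonicity applied to the endpoints of the interval $[\alpha,\alpha_0]$. My strategy is to split $\widehat{A}$ into its two natural summands from~\eqref{eq:WidehatA}, namely $A(m,\alpha,\sigma,\sigma)$ and $(e^\alpha+1)/(2\alpha)$, and prove that each is non-increasing in $\alpha$ on the stated range.

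For the first summand, I would introduce $c=c(\sigma):=2(1-\sigma)/(2\sigma-1)>0$, so that
\[
A(m,\alpha,\sigma,\sigma) = \frac{m(2\sigma-1)}{2(1-\sigma)^2}\cdot\frac{1-e^{-c\alpha}}{\alpha}.
\]
The $\alpha$-dependence is captured entirely by the factor $(1-e^{-c\alpha})/\alpha$, whose derivative equals $\alpha^{-2}\left((1+c\alpha)e^{-c\alpha}-1\right)$. Applying the elementary inequality $1+x\le e^{x}$ with $x=c\alpha$ gives $(1+c\alpha)e^{-c\alpha}\le1$, so this derivative is non-positive on the whole positive half-line. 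Hence $A(m,\alpha,\sigma,\sigma)$ is non-increasing in $\alpha$ on $(0,\infty)$, independently of $\sigma$.

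For the second summand $g(\alpha):=(e^\alpha+1)/(2\alpha)$, a direct computation gives $g'(\alpha)=((\alpha-1)e^\alpha-1)/(2\alpha^2)$. Setting $h(\alpha):=(\alpha-1)e^\alpha-1$, I observe that $h'(\alpha)=\alpha e^\alpha>0$ on $(0,\infty)$, so $h$ is strictly increasing on the positive axis; and by the very definition of $\alpha_0$ one has $h(\alpha_0)=-((1-\alpha_0)e^{\alpha_0}+1)=0$. Consequently $h<0$, and thus $g'<0$, throughout $(0,\alpha_0)$, so $g$ is strictly decreasing on $(0,\alpha_0]$.

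Summing the two monotonicity assertions yields that $\alpha\mapsto\widehat{A}(m,\alpha,\sigma)$ is non-increasing on $(0,\alpha_0]$, whence the claim. I do not expect any real obstacle; the only mildly delicate point is the identification of $\alpha_0$ as the unique critical point of $g$, but this is immediate from the strict monotonicity of $h$ together with the defining equation $(1-\alpha_0)e^{\alpha_0}+1=0$.
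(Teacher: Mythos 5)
Your proposal is correct and follows essentially the same route as the paper: both arguments reduce to showing $\partial_\alpha\widehat{A}(m,\alpha,\sigma)\leq 0$ on $(0,\alpha_0]$, using the inequality $(1+x)e^{-x}\leq 1$ for the $A(m,\alpha,\sigma,\sigma)$ contribution and the sign of $(1-\alpha)e^{\alpha}+1$ (via the definition of $\alpha_0$) for the $(e^{\alpha}+1)/(2\alpha)$ contribution. The only difference is presentational: you treat the two summands of $\widehat{A}$ separately, while the paper differentiates the combined difference $\widehat{A}(m,\alpha_0,\sigma)-\widehat{A}(m,\alpha,\sigma)$ in one expression.
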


\begin{proof}
By simple calculation we have
\begin{multline*}
\frac{\partial{}}{\partial{\alpha}}\left(\widehat{A}\left(m,\alpha_0,\sigma\right)-\widehat{A}\left(m,\alpha,\sigma\right)\right) = 
\frac{1}{2\alpha^2(1-\sigma)^2}\biggl(\left((1-\alpha)e^{\alpha}+1\right)(1-\sigma)^2\biggr. \\
\left.+m(2\sigma-1)-m\left(2\sigma-1+2\alpha(1-\sigma)\right)\exp{\left(-\frac{2\alpha(1-\sigma)}{2\sigma-1}\right)}\right).
\end{multline*}
By the definition of $\alpha_0$ we clearly have $\left((1-\alpha)e^{\alpha}+1\right)(1-\sigma)^2\geq 0$. Because $e^{-x}\leq 1/(1+x)$ for $x\geq 0$, we also have 
\[
\exp{\left(-\frac{2\alpha(1-\sigma)}{2\sigma-1}\right)} \leq \frac{2\sigma-1}{2\sigma-1+2\alpha(1-\sigma)}.
\]
This implies
\[
m(2\sigma-1)-m\left(2\sigma-1+2\alpha(1-\sigma)\right)\exp{\left(-\frac{2\alpha(1-\sigma)}{2\sigma-1}\right)} \geq 0.
\]
Therefore, the function $\widehat{A}\left(m,\alpha_0,\sigma\right)-\widehat{A}\left(m,\alpha,\sigma\right)$ is increasing in $\alpha$, and the stated inequality now follows since this function is identically zero for $\alpha=\alpha_0$.
\end{proof}

\begin{proof}[Proof of Corollary~\ref{cor:ZetaExplicit}]
We are using the first part of Theorem~\ref{thm:MainExplicit} for $\sdeg=\sq=m=\alpha_1=1$ and $\tau_0=t_0=10^6$, and thus $\tau=|t|$, in order to prove both~\eqref{eq:LogDerZetaExplicit} and~\eqref{eq:LogZetaExplicit}. We additionally set $\alpha=1.278$. Note that the conditions from the case~(1) of Theorem~\ref{thm:MainExplicit} are satisfied. Then~\eqref{eq:LogDerZetaExplicit} easily follows by observing that $\left(2-\alpha/\alpha_1\right)\sigma>1/3$, and also that the sum of the last term and the third last term on the right-hand side of~\eqref{eq:1stCase} is negative. 

To prove~\eqref{eq:LogZetaExplicit}, note first that $\eta=\eta(\alpha,\sigma,\tau)\in[1/2,1.4]$ for $\sigma$ from the range~\eqref{eq:sigmaRange1}. Then the sum of the fourth term and the seventh term on the right-hand side of~\eqref{eq:1stCaseB} is not greater than
\begin{multline*}
\left(\left(1.4\nu_{2}\right)^{2} + \frac{(1-\sigma)^{2}\left(\log{\log{t}}\right)^{2}}{\nu_{1}^{2}\left(\log{t}\right)^{2\left(1-1/\nu_{2}\right)(1-\sigma)}}\right)\frac{\left(\log{t}\right)^{2-2\sigma}}{(1-\sigma)^{2}\left(\log{\log{t}}\right)^2} \\ 
\leq \left(\left(1.4\nu_{2}\right)^{2} + \frac{1}{e^{2}\nu_{1}^{2}\left(1-\frac{1}{\nu_{2}}\right)^{2}}\right)\frac{\left(\log{t}\right)^{2-2\sigma}}{(1-\sigma)^{2}\left(\log{\log{t}}\right)^2},
\end{multline*}
if $\nu_2<2$. Here we used the fact $e^x \geq ex$ for all real numbers $x$.
The parameters $\nu_1$ and $\nu_2$ allow some optimisation on various terms. Observe that for $\sigma=1-1/\log{\log{t}}$ the right-hand side of the above inequality is constant. Therefore, we choose $\nu_1=3.378$ and $\nu_2=1.182$ in order to minimize 
\[
1.4^2\nu_{2}^{2}e^2 + \frac{1}{\nu_{1}^{2}\left(1-\frac{1}{\nu_{2}}\right)^{2}} + \int_{0}^{\nu_1}\theta_1(u)\dif{u}
\]
that comes when we combine the estimate for the the sum of the fourth term and the seventh term on the right-hand side of~\eqref{eq:1stCaseB} together with the integral of $\theta_1$. The fourth term and the sixth term on the right-hand side of~\eqref{eq:LogZetaExplicit} now easily follow. The seventh term and the eight term follow after merging the ninth, tenth and fifteenth term, and the twelfth and fourteenth term on the right-hand side of~\eqref{eq:1stCaseB}, respectively. Note that $2-\alpha>2/3$. The remaining terms in~\eqref{eq:LogZetaExplicit} are easy to deduce similarly.
\end{proof}

\begin{proof}[Proof of Corollary~\ref{corol:FixedRegions}]
We are using estimates~\eqref{eq:1stCase} and~\eqref{eq:1stCaseB} from Theorem~\ref{thm:MainExplicit} in order to prove~\eqref{eq:VerySimplifiedLogDer4} and~\eqref{eq:Very2ndCaseSimpler}. Take $\alpha=1.278$, $\alpha_1=1.3$ and $\tau_0=\exp{\left(\exp{(13)}\right)}$. The conditions from~(1) of Theorem~\ref{thm:MainExplicit} are satisfied with our choice of parameters. We can see that for $\sigma\in(1/2,1)$
\begin{equation}
\label{eq:UpperForA}
A\left(\sdeg,\alpha,\sigma,\sigma\right) - \frac{\sdeg}{1-\sigma} = \frac{\sdeg}{1-\sigma}\left(-1+\frac{2\sigma-1}{2\alpha(1-\sigma)}\left(1-\exp{\left(-\frac{2\alpha(1-\sigma)}{2\sigma-1}\right)}\right)\right) \leq -1.278\sdeg,
\end{equation}
where $A$ is given as in~\eqref{eq:A}. Also, 
\[
\left(\frac{e^{\alpha}+1}{2\alpha} + \frac{m_{\cL}}{\alpha}\left(\frac{\sq}{\tau}\right)^{\frac{2}{\sdeg}}\right)\left(\log{\tau}\right)^{2-2\sigma} \leq 1.796\left(\log{\tau}\right)^{2-2\sigma}
\]
since $|t|\geq m_{\cL}+2\cdot10^3$. Then the sum of the first two terms and the last term on the right-hand side of~\eqref{eq:1stCase} is not greater than the sum of the first two terms on the right-hand side of~\eqref{eq:VerySimplifiedLogDer4}. Next, we can easily see that $\mathfrak{a}\leq 1.1$ and $\mathfrak{b}<0$, where we again used the fact that $t_0\geq m_{\cL}+2\cdot10^3\geq 2\cdot10^3$ and that both $\mathfrak{a}=\mathfrak{a}\left(m_{\cL},\alpha_1,t_0\right)$ and $\mathfrak{b}=\mathfrak{b}\left(\sdeg,\sdeg,m_{\cL},\alpha_1,t_0,\tau_0\right)$ are decreasing functions in $t_0$. The estimate for the remaining terms now easily follows since each term is a decreasing function in $\tau$.

Estimate~\eqref{eq:Very2ndCaseSimpler} is derived in a similar way. Now we have
\begin{flalign}
\label{eq:UpperForAtilde}
&\widetilde{A}\left(\sdeg,\alpha,\sigma,\tau\right)\frac{\left(\log{\tau}\right)^{2-2\sigma}}{\log{\log{\tau}}} - \frac{\sdeg\eta(\alpha,\sigma,\tau)}{(1-\sigma)\log{\log{\tau}}} \nonumber \\ 
&\leq \sdeg\frac{\left(\log{\tau}\right)^{2-2\sigma}-1}{2(1-\sigma)\log{\log{\tau}}} 
+ \left(\frac{e^{\alpha}+1}{4\alpha}-0.639\sdeg\right)\frac{\left(\log{\tau}\right)^{2-2\sigma}}{\log{\log{\tau}}}
+ \frac{\sdeg \alpha(1-\alpha(1-\sigma))(\log{\tau})^{2-2\sigma}}{2(1-\sigma)\left((2\sigma-1)\log{\log{\tau}}-\alpha\right)\log{\log{\tau}}} \nonumber \\
&\leq \sdeg\frac{\left(\log{\tau}\right)^{2-2\sigma}-1}{2(1-\sigma)\log{\log{\tau}}} 
+ \left(\frac{e^{\alpha}+1}{4\alpha}-0.639\sdeg\right)\frac{\left(\log{\tau}\right)^{2-2\sigma}}{\log{\log{\tau}}}
+ \frac{1.23\sdeg\left(\log{\tau}\right)^{2-2\sigma}}{(1-\sigma)^{2}\left(\log{\log{\tau}}\right)^{2}},
\end{flalign}
where we have used~\eqref{eq:UpperForA} and $\eta(\alpha,\sigma,\tau)= 1/2 + \alpha\left(2(2\sigma-1)\log{\log{\tau}}-2\alpha\right)^{-1}$, see~\eqref{eq:eta}. Then the sum of the first two terms and the last term on the right-hand side of~\eqref{eq:1stCaseB} is not greater than the sum of the first two terms on the right-hand side of~\eqref{eq:Very2ndCaseSimpler} and the last term on the right-hand side of~\eqref{eq:UpperForAtilde}. Note that $1/2\leq\eta(\alpha,\sigma,\tau)\leq 1$. Similarly as in the proof of Corollary~\ref{cor:ZetaExplicit}, we optimize parameters $\nu_1$ and $\nu_2$, obtaining $\nu_1=3.049$ and $\nu_2=1.244$. With this we can show that the sum of the fourth, fifth and seventh term on the right-hand side of~\eqref{eq:1stCaseB} and the last term on the right-hand side of~\eqref{eq:UpperForAtilde} is not greater than the last term on the right-hand side of~\eqref{eq:Very2ndCaseSimpler}. The sum of all other terms from~\eqref{eq:1stCaseB} is easily seen to be smaller than $\sdeg\left(\log{\log{\log{\tau}}}+8\right)$. The proof of~\eqref{eq:Very2ndCaseSimpler} is thus complete.

For the proof of~\eqref{eq:VerySimplifiedLogDer42} and~\eqref{eq:VerySimplifiedLog42} we are using estimates~\eqref{eq:LogDer1LineExp} and~\eqref{eq:Log1LineExp} from Theorem~\ref{thm:1LineExplicit}. Take $\alpha=2.186$ for ~\eqref{eq:VerySimplifiedLogDer42} and $\tau_0=\exp{(\exp{(13)})}$ in both cases. The value for $\alpha$ was chosen in order to minimize the constant term in~\eqref{eq:LogDer1LineExp} for $\sdeg=m=1$. We can see that $\mathfrak{a}_2=\mathfrak{a}\left(m_{\cL},13/2,t_0\right)\leq 1.1$ and $\mathfrak{b}_2=\mathfrak{b}\left(\sdeg,\sdeg,m_{\cL},13/2,t_0,\tau_0\right)\leq -1.56$. Then~\eqref{eq:VerySimplifiedLogDer42} easily follows by similar reasoning as before. For the proof of~\eqref{eq:VerySimplifiedLog42} we are taking $\alpha=1.278$ since we are minimizing the third term on the right-hand side of~\eqref{eq:Log1LineExp}. The result now easily follows.
\end{proof}

\section{Discussion}
\label{sec:discussion}

\subsection{On the estimates for $\left|\log{\cL(s)}\right|$}
\label{sec:DiscussionLog}

It is clear that our bounds for $\left|\log{\cL(s)}\right|$ are weaker for $s$ close to the critical line, or weaker even for all $\sigma\in(1/2,1)$ in the case of the Riemann zeta-function, when compared with known results, see Section~\ref{sec:intro}. The reason for this is mainly because our approach relies upon the integration of $\left(\cL'/\cL\right)(s)$ via the Selberg moment formula, see Section~\ref{sec:SMF}. However, the advantage of this is that estimates are valid for the modulus of $\log{\cL(s)}$ rather than just $\pm\log{|\cL(s)|}$, e.g., our estimates cover also the argument of $\cL(s)$ right of the critical line. The method from~\cite{CarneiroChandee} is different and produces upper bounds for $\pm\log{|\zeta(s)|}$ that have currently the sharpest main terms. In a follow-up paper, this method will be generalized to functions in $\cS$ and $\cSP$. In addition to this, generalizing the results for $S_{n,\sigma}(t)$ from~\cite{Carneiro} could also be worthwhile to perform. Recently, explicit and conditional upper bounds for $\left|S_{0,1/2}(t)\right|$ and $\left|S_{1,1/2}(t)\right|$ appeared in~\cite{SimonicSonRH}.

Our estimates on the $1$-line simply follow from this general approach, and thus no special attention was given for possible improvements. In fact, more precise methods were developed in~\cite{Lamzouri} to obtain explicit estimates of the same shape as~\eqref{eq:Littlewood1LineUpper} and~\eqref{eq:Littlewood1Line} for $\zeta(1+\ie t)$ and $L(1,\chi)$, and in~\cite{Lum18} their method was extended to entire $L$-functions. It might be interesting to extend the results from~\cite{Lum18} to the Selberg class of functions.

\subsection{On Corollary~\ref{cor:thm2} and Theorem~\ref{thm:1line}}
\label{sec:discussion2}

Note that even if $\cL\in\cSP$, one should expect that additional information on the distribution of $|a(p)|$, as discussed in Section~\ref{sec:Distr}, would improve the main terms in Corollary~\ref{cor:thm2} and Theorem~\ref{thm:1line}. We are giving several examples that illustrate this claim. 

Consider the Dirichlet character $\chi(n)$ modulo $5$, generated by $\chi(2)=i$. Note that this character is primitive and not quadratic. Let $\cL(s)=\zeta(s)L(s,\chi)$. Then 
\[
a(p) = \left\{
        \begin{array}{ll}
        2, & p\equiv 1\, (5), \\
        1+\ie, & p\equiv 2\, (5), \\
        1-\ie, & p\equiv 3\, (5), \\
        0, & p\equiv 4\, (5).
        \end{array}
        \right.
\]
Therefore,
\begin{equation*}
\sum_{p\leq x}|a(p)| = 2\pi(x;5,1)+\sqrt{2}\left(\pi(x;5,2)+\pi(x;5,3)\right) = \left(\frac{1+\sqrt{2}}{2}\right)\frac{x}{\log{x}} + O\left(\frac{x}{\log^{2}{x}}\right),
\end{equation*}
uniformly for all $x\geq 2$, where $\pi(x;q,a)$ counts prime numbers that not exceed $x$ and that have a residue $a$ modulo $q$, see~\cite[Corollary~11.20]{MontgomeryVaughan}. This shows that for our $\cL$, Conjecture~\ref{conj:SelbergVariant2} holds with $\widehat{\mathcal{C}_{\cL}^{P_1}}(x)\equiv\kappa=\frac{1}{2}\left(1+\sqrt{2}\right)$. On the other hand, $\cL$ has a polynomial Euler product representation with $m=2$. Therefore, the main terms given by Theorem~\ref{thm:MainGeneralV2} (resp.~ Theorem~\ref{thm:MainGeneral1line}) are better than those from Corollary~\ref{cor:thm2} (resp.~Theorem~\ref{thm:1line}) since $\kappa<m$.

In the case of Dedekind zeta-functions $\zeta_{\K}(s)$, their Dirichlet coefficients $a(n)$ count the number of integral ideals with norm equal to $n$. By the prime ideal theorem we thus have 
\begin{equation}
\label{eq:Dedekind}
\sum_{p\leq x}|a(p)| \leq \frac{x}{\log{x}} + O_{\K}\left(\frac{x}{\log^{2}{x}}\right),
\end{equation}
implying that Conjecture~\ref{conj:SelbergVariant2} holds with $\widehat{\mathcal{C}_{\cL}^{P_1}}(x)\equiv\kappa=1$. However, $m=n_{\K}$. This shows that estimates for $\zeta_{\K}(s)$ from Corollary~\ref{cor:thm2} can be improved via Theorem~\ref{thm:MainGeneralV2}. Under GRH for $\zeta_{\K}(s)$, we know that the implied constant in~\eqref{eq:Dedekind} can be uniformly bounded by $\ll 1+\log{\left|\Delta_{\K}\right|}$ for all $x\geq 2$, see~\cite[Corollary~1.4]{GrenieMolteni16} for example. Therefore, Theorem~\ref{thm:MainGeneral1line} gives
\begin{equation}
\begin{gathered}
\label{eq:Dedekind}
\left|\frac{\zeta_{\K}'}{\zeta_{\K}}(1+\ie t)\right| \leq 2\log{\log{\tau}} + O\left(\left(1+\log{\left|\Delta_{\K}\right|}\right)\log{\log{\log{\tau}}}\right), \\
\left|\log{\zeta_{\K}(1+\ie t)}\right| \leq \log{\log{\log{\tau}}} + O\left(1+\log{\left|\Delta_{\K}\right|}\right)
\end{gathered}
\end{equation}
for sufficiently large $\tau$. Note that these inequalities asymptotically improve what Theorem~\ref{thm:1line} gives for $\cL(s)=\zeta_{\K}(s)$.

The following example can be seen as a blend of the previous two examples. Let $q\geq 3$ be a prime number and let
\[
\cL(s) = \prod_{\substack{\chi\,\mathrm{primitive}\\ \mathrm{mod}\,q}} L(s,\chi) = \frac{1}{\zeta(s)}\zeta_{\mathbb{Q}(\zeta_q)}(s),
\]
where $\mathbb{Q}(\zeta_q)$ is the $q$-th cyclotomic field. Then $a(p)=\sum_{\chi}\chi(p)$, where the sum runs through all primitive characters $\chi$ modulo $q$. Note that, under GRH for $\cL$, we have 
\[
\pi(x;q,a) = \frac{x}{\varphi(q)\log{x}} + O\left(\left(\frac{1}{\varphi(q)}+\frac{\log^{3}{q}}{\sqrt{q}}\right)\frac{x}{\log^{2}{x}}\right),
\]
uniformly for $x\geq2$ and $q\leq x$. Trivially, if $2\leq x<q$, then $\pi(x;q,a)=1$ if $a\leq x$ is a prime number, and $\pi(x;q,a)=0$ otherwise. Therefore, in both cases we obtain
\[
\sum_{p\leq x}\left|a(p)\right| = (q-2)\pi(x;q,1) + \sum_{k=2}^{q-1}\pi(x;q,k) \leq \left(\frac{2(q-2)}{q-1}\right)\frac{x}{\log{x}} + O\left(\frac{\left(\sqrt{q}\log^{3}{q}\right)x}{\log^{2}{x}}\right),
\]
uniformly for all $x\geq 2$. This shows that for the above $\cL$, Conjecture~\ref{conj:SelbergVariant2} holds with $\widehat{\mathcal{C}_{\cL}^{P_1}}(x)\equiv 2(q-2)/(q-1)$ and $\widehat{\mathcal{C}_{\cL}^{P_2}}=O\left(\sqrt{q}\log^{3}{q}\right)$. Now,
\[
\left|b\left(p^{k}\right)\right| = \frac{1}{k}\left|\sum_{\substack{\chi\,\mathrm{primitive}\\ \mathrm{mod}\,q}}\chi\left(p^k\right)\right| \leq 
\left\{
        \begin{array}{ll}
        q, & p^k\equiv 1\, (q), \\
        1, & \mathrm{otherwise}.
        \end{array}
\right.
\]
Therefore, by choosing $\theta=1/2-1/\log{q}$, we can take $\mathcal{C}_{\cL}^{E}=q^{1-\theta}=e\sqrt{q}$. Note that $\sdeg=q-2$ and $\sq=q^{q-2}$. Therefore, results from Corollary~\ref{cor:thm2} for our $\cL$ can be improved via Theorem~\ref{thm:MainGeneralV2}, and Theorem~\ref{thm:MainGeneral1line} gives
\begin{gather*}
\left|\frac{\cL'}{\cL}(1+\ie t)\right| \leq 4\left(1-\frac{1}{q-1}\right)\log{\log{(q|t|)^{q-2}}} + O\left(\sqrt{q}\left(\log{q}\right)^{3}\log{\log{\log{(q|t|)^{q-2}}}}\right), \\
\left|\log{\cL(1+\ie t)}\right| \leq 2\left(1-\frac{1}{q-1}\right)\log{\log{\log{(q|t|)^{q-2}}}} + O\left(\sqrt{q}\log^{3}{q}\right)
\end{gather*}
for sufficiently large $|t|$ and absolute implied constants. This is not only an improvement over what Theorem~\ref{thm:1line} would give, but also what~\eqref{eq:Dedekind} trivially assures. 

Some other examples are also given by normalized $L$-functions attached to holomorphic newforms, see~\cite[p.~39]{SteudingBook}. For these functions $m=2$, while the prime mean-square conjecture with $\kappa=1$ is known to hold due to Rankin~\cite[Theorem~2]{Rankin73}.

\subsection{On the estimates for $s=1$} As we already mentioned in the introduction, our main purpose was to obtain bounds that are valid for $\tau$ and $t$ sufficiently large. However, we simultaneously derive the corresponding results also when $s=1$ and $\cL$ is entire, see Remarks~\ref{thm:1Conjectures}--\ref{rmk:sigma1Explicit}. Although the approach presented here does not always yield the best known results, we can still gain some improvements. For example, an immediate consequence of Remark~\ref{rmk:sigma1Explicit} is the following.

\begin{corollary}
\label{cor:DedekindRes}
Assume the Generalised Riemann Hypothesis for $\zeta_{\mathbb{K}}(s)$ and that $\left(\zeta_{\mathbb{K}}/\zeta\right)(s)$ is entire, i.e., the Dedekind conjecture is true. Let $\kappa_{\mathbb{K}}$ be the residue of $\zeta_{\mathbb{K}}(s)$ at $s=1$. Then 
\[
\left|\kappa_{\mathbb{K}}\right| \leq \left(\exp{\left(1.271+\frac{2.475}{\log{\log{\left|\Delta_{\mathbb{K}}\right|}}}\right)\log{\log{\left|\Delta_{\mathbb{K}}\right|}}}\right)^{n_{\mathbb{K}}-1}
\]
for $n_{\mathbb{K}}\geq 2$ and $\left|\Delta_{\mathbb{K}}\right|\geq 5.4\cdot10^6$.
\end{corollary}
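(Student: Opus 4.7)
The plan is to apply Remark~\ref{rmk:sigma1Explicit} to the auxiliary function $\cL(s):=\zeta_{\mathbb{K}}(s)/\zeta(s)$. Under the Dedekind conjecture this function is entire, and it is straightforward to check that $\cL\in\cSP$ with polynomial Euler product of order $m=n_{\mathbb{K}}-1$ (since the Dedekind zeta has a local factor of order $n_{\mathbb{K}}$ containing $(1-p^{-s})^{-1}$ as a sub-factor), degree $\sdeg=n_{\mathbb{K}}-1$, conductor $\sq=\left|\Delta_{\mathbb{K}}\right|$, and $m_{\cL}=0$. Using the Legendre duplication formula to break each $\Gamma(s)$ coming from the complex places of $\mathbb{K}$ into $\Gamma(s/2)\Gamma(s/2+1/2)$ and then cancelling one $\Gamma(s/2)$ against the gamma factor of $\zeta$, one can arrange the functional equation of $\cL$ so that all $\lambda_j=1/2$ (the strong $\lambda$-conjecture) with $\mu_j\in\{0,1/2\}$; in particular $\mu^{+}=1/2$.

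Since $\zeta$ has a simple pole at $s=1$ with residue $1$, one has $\kappa_{\mathbb{K}}=\lim_{s\to 1}(s-1)\zeta_{\mathbb{K}}(s)=\cL(1)$, and therefore $\log\left|\kappa_{\mathbb{K}}\right|=\Re\log\cL(1)\leq\left|\log\cL(1)\right|$. Moreover, GRH for $\zeta_{\mathbb{K}}$ implies RH for $\zeta$ (as zeros of $\zeta$ are zeros of $\zeta_{\mathbb{K}}$) and, together with the entirety of $\cL$, GRH for $\cL$. I would then apply Remark~\ref{rmk:sigma1Explicit} with $\alpha=\log 2$ and $\sq_0=5.4\cdot 10^6$, which satisfies $\sq_0>\exp(2\sqrt{60})$ since $\log(5.4\cdot 10^6)\approx 15.502$ whereas $2\sqrt{60}\approx 15.492$. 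The main terms in the resulting upper bound for $\left|\log\cL(1)\right|$ amount to
\[
m\log\left(2\log\log\sq\right)+m\gamma=(n_{\mathbb{K}}-1)\bigl(\log\log\log\left|\Delta_{\mathbb{K}}\right|+\log 2+\gamma\bigr),
\]
and $\log 2+\gamma=1.2703\ldots<1.271$.

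It remains to consolidate the error terms from Theorem~\ref{thm:1LineExplicit} (with $\mathfrak{a}_2$, $\mathfrak{b}_2$ replaced by $\mathfrak{a}_3$, $\mathfrak{b}_3$ as defined in~\eqref{eq:Funa} and~\eqref{def:b3}) into a single contribution of size at most $2.475(n_{\mathbb{K}}-1)/\log\log\left|\Delta_{\mathbb{K}}\right|$. Each remaining term is of the form $C_1 m/\log\log\sq$, $C_2 m\log\log\sq/\log\sq$, or $C_3 m(\log\log\sq)^{k}/\log^{2}\sq$, together with an $O(\sdeg/(\log^{2}\sq\log\log\sq))$ contribution; since $(\log\log\sq)^{k}/(\log\sq)^{j}$ is decreasing in $\sq$ for $\sq\geq\sq_0$ (as soon as $\log\log\sq_0\geq k+1$), each such term is uniformly bounded by a constant multiple of $1/\log\log\sq$. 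For the $\mathfrak{b}_3$ contribution one checks that the integral $\int_0^{\infty}\log((3+2y)/(4\pi))/(1+y^2)\,dy$ (with $\mu^{+}=1/2$) is negative, so that $\max\{0,\mathfrak{b}_3\}$ is controlled solely by the explicit constants in~\eqref{def:b3}.

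The main obstacle is the numerical verification: summing all these bounded contributions together with the slack $1.271-(\log 2+\gamma)\approx 6.4\cdot 10^{-4}$ must fit inside the constant $2.475$, which is a careful but routine arithmetic check. After this, exponentiating
\[
\log\left|\kappa_{\mathbb{K}}\right|\leq(n_{\mathbb{K}}-1)\left[\log\log\log\left|\Delta_{\mathbb{K}}\right|+1.271+\frac{2.475}{\log\log\left|\Delta_{\mathbb{K}}\right|}\right]
\]
yields the claimed upper bound for $\left|\kappa_{\mathbb{K}}\right|$.
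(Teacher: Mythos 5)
Your proposal is correct and follows exactly the route the paper intends: Corollary~\ref{cor:DedekindRes} is stated there as an immediate consequence of Remark~\ref{rmk:sigma1Explicit}, applied to $\cL=\zeta_{\mathbb{K}}/\zeta$ with $m=\sdeg=n_{\mathbb{K}}-1$, $\sq=\left|\Delta_{\mathbb{K}}\right|$, $m_{\cL}=0$, $\mu^{+}=1/2$, $\alpha=\log{2}$ and ${\sq}_0=5.4\cdot10^6>\exp{\left(2\sqrt{60}\right)}$, with the main term $m\log{\left(2\log{\log{\sq}}\right)}+m\gamma$ giving $\log{2}+\gamma<1.271$. The arithmetic you defer does close: with these choices $\mathfrak{b}_3<0$, each error term of~\eqref{eq:Log1LineExp} multiplied by $\log{\log{\sq}}$ is decreasing for $\sq\geq{\sq}_0$, and their sum at $\sq={\sq}_0$ in the worst case $n_{\mathbb{K}}=2$ is approximately $2.4747<2.475$.
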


This improves an upper bound on 
$\kappa_{\mathbb{K}}$ from~\cite[Corollary~2]{GarciaLee22} for $\left|\Delta_{\mathbb{K}}\right|\geq 5.4\cdot10^6$. Note that although their estimate is valid for all $\left|\Delta_{\mathbb{K}}\right|\geq 3$, it appears that the method cannot produce better constant than $2.45$ in the exponent (in place of our $1.271$) with just restricting the analysis on larger discriminants.

%%%%%%%%%%%%%%%%%%%%%%%%%%%%%%%%%%%%%%%%%%%%%%%%%%%%%%%%%%

%\bibliographystyle{amsalpha}
%\bibliography{ref}

\providecommand{\bysame}{\leavevmode\hbox to3em{\hrulefill}\thinspace}
\providecommand{\MR}{\relax\ifhmode\unskip\space\fi MR }
% \MRhref is called by the amsart/book/proc definition of \MR.
\providecommand{\MRhref}[2]{%
  \href{http://www.ams.org/mathscinet-getitem?mr=#1}{#2}
}
\providecommand{\href}[2]{#2}

\end{document}